\documentclass[a4paper,10pt]{article}
\usepackage[english]{babel}
\usepackage[utf8]{inputenc}
\usepackage[T1]{fontenc}
\usepackage{textcomp} 
\usepackage{amsmath}
\usepackage{amsmath}
\usepackage{enumerate}
\usepackage{amsfonts}
\usepackage{amsthm}
\usepackage{mathrsfs}
\usepackage{amssymb}
\usepackage{comment}
\usepackage{csquotes}
\usepackage{bm}
\usepackage{graphicx}
\usepackage{bbm}
\usepackage{array}
\usepackage{ragged2e}
\usepackage{mathtools}
\usepackage{fancybox,framed}
\usepackage{xcolor}
\usepackage{lipsum}
\usepackage{tabularx}
\usepackage{changepage}
\usepackage[mathscr]{euscript}
\DeclareSymbolFont{rsfs}{U}{rsfs}{m}{n}
\DeclareSymbolFontAlphabet{\mathscrsfs}{rsfs}

\theoremstyle{definition}
\newtheorem{Def}{Definition}[section]

\newtheorem{Rmk}[Def]{Remark}

\theoremstyle{plain}
\newtheorem{Prop}[Def]{Proposition}
\newtheorem{Thm}[Def]{Theorem}
\newtheorem{Lemma}[Def]{Lemma}
\newtheorem{Cor}[Def]{Corollary}

\newcommand{\footremember}[2]{%
   \footnote{#2}
    \newcounter{#1}
    \setcounter{#1}{\value{footnote}}%
}

\newcommand{\R}{\mathbb{R}}

\renewcommand{\P}{\mathbb{P}}
\newcommand{\E}{\mathbb{E}}

\newcommand{\N}{\mathbb{N}}

\newcommand\HHH{\mathfrak{H}}
\newcommand\D{\mathbb{D}}

\newcommand{\norm}[1]{\left\|#1\right\|}

\newcolumntype{M}[1]{>{\raggedright}m{#1}}

\renewcommand{\epsilon}{\varepsilon}

\DeclareMathOperator*{\Dom}{Dom}

\title{Malliavin calculus and densities for chaos-driven stochastic differential equations}
\author{Laurent Loosveldt\footremember{1}{Université de Liège, Belgium}\qquad Yassine Nachit\footremember{2}{ Université de Lille, France} \qquad Ivan Nourdin\footremember{3}{Université du Luxembourg, Luxembourg}}
\begin{document}

\maketitle

\begin{abstract}
We study stochastic differential equations driven by finite-order chaos processes on abstract Wiener spaces, with pathwise Riemann–Stieltjes integration. The driving noise is an $\mathbb{R}^m$-valued chaotic process given by multiple Wiener-Itô integrals of fixed order, allowing for non-Gaussian dynamics. Under mild smoothness assumptions on the coefficients and Hölder-type regularity of the noise, we establish existence and uniqueness of solutions. We then prove Malliavin differentiability and absolute continuity of the law of the solution. Since the usual Gaussian isonormal framework is unavailable, we rely on the Kusuoka-Stroock approach to Malliavin calculus and develop a Taylor expansion for multiple integrals under Cameron-Martin shifts. Under suitable ellipticity, independence, and non-degeneracy conditions, the Bouleau–Hirsch criterion yields density results. Applications to multidimensional Hermite-driven equations are provided.
\end{abstract}

\noindent \textit{Keywords}: Malliavin calculus, Stochastic differential equations, Wiener chaos, Density of laws, Non-Gaussian noise, Hermite process.

\noindent  \textit{2020 MSC}: \textbf{Primary: }60H07, 60H10 \textit{Secondary: }60G22, 60H05.

\section{Introduction}
This paper is concerned with the stochastic differential equation
\begin{equation}\label{eq:SDEsDrivenF}
    X^k_t=x_0^k+ \int_{0}^{t}b_k(X_s)ds + \sum_{\ell=1}^{m}\int_{0}^{t}\sigma_{k,\ell}(X_s)dF_s^{\ell},
  \end{equation}
studied in the following general framework:
\begin{itemize}
\item $x_0\in \R^d$ is the initial value of the process $X$;
\item the stochastic integral in \eqref{eq:SDEsDrivenF} is understood pathwise, in the Riemann--Stieltjes sense (see e.g.\ \cite{young1936inequality});
\item the driving process $F$ is an $\R^m$-valued chaos process defined on an abstract Wiener space $(\Omega,\mathcal{F},\P,\mathfrak{H})$ (see definition (\ref{def:stochasticintegrals}) below);
\item the coefficients $(b_k)_{1 \leq k \leq d}$ and $(\sigma_{k,\ell})_{1 \leq k \leq d, 1 \leq \ell \leq m}$ are (smooth) deterministic functions, as made precise by hypothesis $\mathbf{(H_1)}$ in Subsection \ref{subsec:existencesol}.
\end{itemize}

The question of the existence of a density for the solution to \eqref{eq:SDEsDrivenF} in the classical case where $F$ is a multidimensional Brownian motion was, in fact, Malliavin's original motivation in \cite{MR0536013} for introducing what is now known as Malliavin calculus. Over the last two decades, a substantial body of work has been devoted to the case where $F=(B_t^{H,1},\dots,B_t^{H,d})_{t \geq 0}$ is an $m$-dimensional fractional Brownian motion of Hurst parameter $H \in (0,1)$, that is, a centered Gaussian process with continuous trajectories and covariance
\begin{equation}\label{eqn:covFBM}
\E[B_s^{H,\ell}B_t^{H,\ell'}]=\delta_{\ell,\ell'} \frac{1}{2}(|s|^{2H}+|t|^{2H}-|t-s|^{2H}),
\end{equation}
see e.g.\ \cite{Baudoin2007,MR2680405,Hairer2007,Hairer2011,Hairer2013,Neuenkirch2007,MR1893308,nualart2009malliavin}. Among these contributions, \cite{nualart2009malliavin} investigates the Malliavin smoothness of the solution to \eqref{eq:SDEsDrivenF} and, as an application, establishes that this solution admits a density with respect to the Lebesgue measure. The results obtained in \cite{nualart2009malliavin} subsequently turned out to be key ingredients in several further developments: smoothness of the density under a Hörmander-type hypoellipticity condition \cite{Baudoin2007}, upper bounds for the density \cite{Baudoin2014}, asymptotic expansions for expectations of smooth functionals of the solution \cite{Neuenkirch2009}, rates of convergence and asymptotic error distribution of Euler approximation schemes \cite{Hu2016}, and statistical inference \cite{Chronopoulou2013}. The strategy introduced in \cite{nualart2009malliavin} has also been extended in various directions, covering for instance mixed stochastic differential equations \cite{Shevchenko2013}, delay equations \cite{Leon2011} and fractional stochastic Volterra integral equations \cite{Besalu2021}. While \cite{nualart2009malliavin} only deals with the range $H \in (1/2,1)$, the authors of \cite{MR2680405} reinterpret equation \eqref{eq:SDEsDrivenF} in the rough-paths sense of \cite{Lyons1998}, thereby covering much more general rough Gaussian processes, and in particular the case $H \in (1/4,1/2)$.

The purpose of the present paper is to push this theory further, in the general context of chaotic processes on abstract Wiener spaces. More precisely, we show that, under mild conditions on the coefficients $(b_k)_{1 \leq k \leq d}$ and $(\sigma_{k,\ell})_{1 \leq k \leq d, 1 \leq \ell \leq m}$ (hypothesis $\mathbf{(H_1)}$ in Subsection \ref{subsec:existencesol}) and on the driving process $F$ (hypothesis $\mathbf{(H_2)}$ in Subsection \ref{subsec:existencesol}), the stochastic differential equation \eqref{eq:SDEsDrivenF} admits a unique solution. Under additional assumptions on the coefficients (hypothesis $\mathbf{(H_3)}$ in Section \ref{sec:density}) and under independence and non-degeneracy assumptions (hypotheses $\mathbf{(H_4)}$ and $\mathbf{(H_5)}$ in Section \ref{sec:density}), which naturally extend their standard counterparts in the Gaussian setting, we then prove that this solution admits a density.

Let us stress at once a central difficulty in this last task. In order to establish the absolute continuity of the law of the solution $\{X_t\}_{t \in [0,T]}$, we rely on the Bouleau--Hirsch criterion (Theorem \ref{thm:BouleauHirsch} below), which is formulated in terms of Malliavin calculus. This strategy has already proved its effectiveness in the Gaussian settings recalled above. There, however, the analysis is considerably simplified by the fact that Malliavin calculus can be set up via an isonormal process canonically associated with the underlying Gaussian process, which greatly facilitates the computation of the Malliavin derivatives of the quantities involved. This favourable picture breaks down as soon as the driving process $F$ is no longer Gaussian. Consequently, the classical methods developed in the Gaussian setting are not directly available to us, and new strategies have to be devised. Our main tool is an alternative definition of Malliavin calculus, which is presented in Subsection \ref{sub:malialter}.

The paper is organised as follows. Section \ref{sec:backgroundstrat} is devoted to the background of our work: we first recall the definition of abstract Wiener spaces, together with Malliavin calculus and multiple integrals in this framework; we then specify conditions under which the deterministic equation associated with \eqref{eq:SDEsDrivenF} admits a solution, and derive from them a condition on the driving process $F$ ensuring the existence of a solution to the stochastic differential equation \eqref{eq:SDEsDrivenF}. Section \ref{sec:backgroundstrat} ends with a detailed presentation of the strategies we will use to establish the Malliavin smoothness of this solution. In Section \ref{sec:tools}, we develop several tools in Malliavin calculus that play a central role in our approach. Section \ref{sec:smoothness} is then devoted to the study of the Malliavin smoothness of the solution to \eqref{eq:SDEsDrivenF}. As a consequence, we show in Section \ref{sec:density} that this solution admits a density with respect to the Lebesgue measure. Finally, Section \ref{sec:Hermite} applies our approach to stochastic differential equations driven by a multidimensional Hermite process.

Throughout the paper, we use the symbol $\bullet$ for stochastic arguments, so that $X(\bullet)$ denotes the random variable $\omega\mapsto X(\omega)$, and the symbol $\star$ for deterministic arguments, so that $f(\star)$ stands for the map $t\mapsto f(t)$.

\section{Background and strategies} \label{sec:backgroundstrat}

\subsection{Abstract Wiener spaces} \label{subsec:abstractwienerspaces}

An abstract Wiener space is a quadruple $(\Omega,\mathcal{F},\P,\mathfrak{H})$ where
\begin{enumerate}[(a)]
\item $\Omega$ is a separable Banach space;
\item $\P$ is a Gaussian measure on $\Omega$ with full support;
\item $\mathcal{F}$ is the completion with respect to $\P$ of the Borel $\sigma$-algebra on $\Omega$;
\item $(\mathfrak{H}, \langle \cdot , \cdot \rangle_{\mathfrak{H}})$ is a separable Hilbert space that is continuously and densely embedded in $\Omega$ via an injection $j \, : \, \mathfrak{H} \to \Omega$.
\end{enumerate}
The image $j(\mathfrak{H})$ is usually called the Cameron--Martin space. By the Riesz representation theorem, one identifies $\mathfrak{H}$ with its dual and considers the adjoint operator $j^* \, :\, \Omega^* \to \mathfrak{H}$ of $j$, which is characterised by
\begin{equation}\label{eqn:operateurdual}
\langle j^*(y),h \rangle_\mathfrak{H}=y(j(h))
\end{equation}
for $y \in \Omega^*$ and $h \in \mathfrak{H}$. We further assume that, for every $y \in \Omega^*$,
\begin{equation}\label{eqn:caractofy}
\int_{\Omega} e^{iy(\omega)} d\mathbb{P}(\omega)= \exp \left( - \frac{1}{2}\| j^*(y) \|_{\mathfrak{H}}^2 \right).
\end{equation}
The subspace $j^*(\Omega^*)$ is dense in $\mathfrak{H}$, and this allows one to define, for every $g \in \mathfrak{H}$, a centered Gaussian random variable $X_g$. Whenever $g= j^*(y)$ for some $y \in \Omega^*$, it suffices, in view of \eqref{eqn:caractofy}, to set $X_g=y$. The same identity shows that the map $g \mapsto X_g$ from $j^*(\Omega^*)$ to $L^2(\Omega)$ is continuous (with respect to the norm of $\mathfrak{H}$), and therefore extends continuously to the whole space $\mathfrak{H}$. The resulting Gaussian process $\{X_g \, : \, g \in \mathfrak{H}\}$ on $(\Omega,\mathcal{F},\P)$ is \emph{isonormal}, in the sense that
\[  \E [X_gX_{g'}] = \langle g,g' \rangle_{\mathfrak{H}} \qquad \text{for all } g,g' \in \mathfrak{H}.\]
Such a process is the natural framework in which to set up Malliavin calculus. Below, we collect the main ingredients needed in the present paper and refer to \cite{nourdin2012normal,Nualart} for details and proofs.

\paragraph{Malliavin derivatives and Sobolev-type spaces.}
Throughout this paper, we adopt the usual notation $\otimes$ for the tensor product of elements of $\mathfrak{H}$, and $\odot$ for the symmetric tensor product, that is, given $n \in \N$ and $h_1,\dots,h_n \in \mathfrak{H}$, 
\[ h_1 \odot \dots \odot h_n := \frac{1}{n!} \sum_{\sigma \in \mathfrak{S}_n} h_{\sigma(1)} \otimes \dots \otimes h_{\sigma(n)},\]
with $\mathfrak{S}_n$ the set of all permutations of $\{1,\dots,n\}$.

For each $q \geq 1$, we write $\mathfrak{H}^{\otimes  q}$ for the $q$th tensor product of $\mathfrak{H}$, $\mathfrak{H}^{\odot q}$ for its $q$th \textit{symmetric} tensor product, and $L^2(\Omega,\mathfrak{H}^{\odot q})$ for the space of $\mathfrak{H}^{\odot q}$-valued random elements $F$ that are $\mathcal{F}$-measurable and satisfy $\mathbb{E} [\| F\|_{\mathfrak{H}^{\odot q}}^2 ] < \infty$. We denote by $\mathcal{S}$ the class of cylindrical random variables of the form
\begin{equation}\label{intro:defmallia}
F=f(X_{h_1},\ldots,X_{h_n}),
\end{equation}
where $n \geq 1$, $h_1, \dots, h_n \in \mathfrak{H}$, and $f$ is infinitely differentiable with all partial derivatives of polynomial growth. For such an $F$, the $q$th Malliavin derivative is the element of $L^2(\Omega,\mathfrak{H}^{\odot  q})$ given by
\begin{equation}\label{eqn:defmalderi}
D^q F = \sum_{\ell_1,\ldots,\ell_q=1}^n \frac{\partial^q f}{\partial \ell_{1} \ldots \partial \ell_{k}}(X_{h_1}, \ldots, X_{h_n}) h_{\ell_1} \otimes \cdots \otimes h_{\ell_q}.
\end{equation}
The operator $D^q$ is closable (see e.g.\ \cite[Proposition 2.3.4]{nourdin2012normal}); for every $m \in \N^*$ and $p \geq 1$, the Sobolev-type space $\mathbb{D}^{m,p}$ is then defined as the closure of $\mathcal{S}$ with respect to the norm
\begin{equation}\label{eqn:defofthenorms}
\| \cdot \|_{m,p} \, : \, F \mapsto \left(\mathbb{E}[|F|^p] + \sum_{q=1}^m \mathbb{E}[ \|D^q F \|^p_{\mathfrak{H}^{\otimes k}}] \right)^\frac{1}{p}.
\end{equation}
Finally, we set $\mathbb{D}^{\infty}=\bigcap_{p\geq 1}\bigcap_{m\geq 1}\mathbb{D}^{m,p}$.

\paragraph{Divergence operator and multiple integrals.}
The divergence operator of order $q$, denoted by $\delta^q$, is defined as the adjoint of $D^q$. It is an unbounded operator from $L^2(\Omega,\HHH^{\otimes q})$ into $L^2(\Omega)$ whose domain $\Dom(\delta^q)$ consists of those $u\in L^2(\Omega,\HHH^{\otimes q})$ for which there exists a constant $c>0$ such that $|\E[\left<D^qF,u\right>_{\HHH^{\otimes q}}]|\leq c\|F\|_{L^2(\Omega)}$ for every $F\in \mathbb{D}^{q,2}$. For $u\in \Dom(\delta^q)$, $\delta^q(u)$ is the unique element of $L^2(\Omega)$ characterised by the duality relation
\begin{equation}\label{duality}
  \E[F\delta^q(u)]=\E[\left<D^qF,u\right>_{\HHH^{\otimes k}}],\quad \text{for all} \;F\in\mathbb{D}^{k,2}.
\end{equation}
A direct consequence of this definition is the ``integration by parts'' formula \cite[Proposition 2.5.4]{nourdin2012normal}: if $F \in \mathbb{D}^{1,2}$ and $u \in \Dom(\delta)$ are such that $\E[F^2\|u\|^2_\mathfrak{H}]$, $\E[F^2\delta(u)^2]$ and $\E[\langle Df,u\rangle_\mathfrak{H}^2]$ are all finite, then $Fu \in \Dom(\delta)$ and
\begin{equation}\label{integrationbypart}
\delta(Fu)=F\delta(u)-\langle DF,u\rangle_\mathfrak{H}.
\end{equation}
The divergence operator is the key ingredient in the construction of multiple stochastic integrals, which play a central role in our work.

Let $q \geq 1$ and $f\in \HHH^{\odot  q}$. The multiple Wiener-It\^o integral of order $q$ of $f$ is the centered random variable
\begin{equation}\label{def:stochasticintegrals}
I_q(f):=\delta^q(f).
\end{equation}

Among the many properties of multiple Wiener-It\^o integrals, the following ones will be used repeatedly.
\begin{itemize}
	\item \textit{Hypercontractivity.} For every $q \geq 1$,  $p \in[2, \infty)$ and $f\in \HHH^{\odot  q}$,
\begin{equation}\label{eq:hyper}
\E\left[\left|I_q(f)\right|^p\right]^{1 / p} \leq (p-1)^{q/2}\, \E\left[\left|I_q(f)\right|^2\right]^{1 / 2} .
\end{equation}
	\item \textit{Isometry formula} (when $p=q$) and \textit{orthogonality} (when $p \neq q$): for $f \in \mathfrak{H}^{\odot  p}$, $g \in \mathfrak{H}^{\odot  q}$ and $p,q \geq 1$,
\begin{equation}\label{eq:isom}
\mathbb{E}\left[I_p(f) I_q(g)\right]= \begin{cases}p !\langle f, g\rangle_{\mathfrak{H}^{\otimes p}} & \text { if } p=q, \\ 0 & \text { if } p \neq q .\end{cases}
\end{equation}
\item \textit{Product formula for multiple Wiener--It\^o integrals:} for any $f\in\mathfrak H^{\odot p}$ and $g\in\mathfrak H^{\odot q}$,
\begin{equation}\label{eq:product-formula}
I_p(f)\,I_q(g)
=
\sum_{r=0}^{p\wedge q}
r!\binom{p}{r}\binom{q}{r}\,
I_{p+q-2r}\big(f\widetilde\otimes_r g\big),
\end{equation}
where $f\widetilde\otimes_r g$ denotes the symmetrised $r$th contraction of $f$ and $g$.
\end{itemize}

\paragraph{Hilbert-space-valued random variables.}
We shall also need random variables taking values in a separable Hilbert space, and we thus extend the above definitions accordingly. Let $\mathcal{U}$ be a separable Hilbert space. We denote by $\mathcal{S}_{\mathcal{U}}$ the class of $\mathcal{U}$-valued smooth random variables of the form $F=\sum_{j=1}^{n}F_j\,u_j$, with $F_j\in \mathcal{S}$ and $u_j\in \mathcal{U}$. For any such $F$ and any $q \geq 1$, the $q$th Malliavin derivative of $F$ is the $\mathfrak{H}^{\otimes q} \otimes \mathcal{U}$-valued random element
\[ D^q F = \sum_{j=1}^n D^q F_j \otimes u_j.\]
As before, one shows that, for every $p,q \geq 1$, $D^q$ is closable from $\mathcal{S}_{\mathcal{U}} \subset L^p(\Omega,\mathcal{U})$ into $L^p(\Omega,\mathfrak{H}^{\otimes q} \otimes \mathcal{U})$. This allows us to introduce the analogous Sobolev-type spaces $\mathbb{D}^{m,p}(\mathcal{U})$ and $\mathbb{D}^{\infty}(\mathcal{U})$, together with the norm
\begin{equation}\label{eqn:defofthenormsU}
\| \cdot \|_{m,p,,\mathcal{U}} \, : \, u \mapsto \left(\mathbb{E}[\|u\|_{\mathcal{U}}^p] + \sum_{q=1}^m \mathbb{E}[ \|D^q u \|^p_{\mathfrak{H}^{\otimes k}\otimes\mathcal{U}}] \right)^\frac{1}{p}.
\end{equation}
Similarly, for $q \geq 1$, if $u \in \mathcal{U} \otimes \mathfrak{H}^{\otimes q}$ has the form $u=\sum_{j=1}^n u_j \times h_j$ with $u_j \in \mathcal{H}$ and $h_j \in \mathfrak{H}^{\otimes q}$, we set
\[ \delta^q(u)=\sum_{j=1}^n u_j \delta^q(h_j),\]
and $\delta^q$ extends to a bounded operator from $\mathcal{U} \otimes \mathfrak{H}^{\otimes q}$ to $L^2(\Omega,\mathcal{U})$ \cite[Section 2.6]{nourdin2012normal}. In particular, for any $p>q$, the identification $\mathfrak{H}^{\otimes p}=\mathfrak{H}^{\otimes {(p-q)}} \otimes \mathfrak{H}^{\otimes q}$ gives meaning to $\delta^q(f)$ for $f \in \mathfrak{H}^{\otimes p}$. Extending definition (\ref{def:stochasticintegrals}) in this way, we obtain two further properties of multiple Wiener-It\^o integrals that will be used in the sequel:
\begin{itemize}
\item \textit{Malliavin derivative of a multiple Wiener-It\^o integral.} For every $p,q \geq 1$ and $f\in \HHH^{\odot  q}$, one has $I_q(f) \in \mathbb{D}^{\infty,p}$, and, for every $r \geq 1$,
\begin{equation}\label{eqn:derivativeofintegral}
D^r I_q(f)= \begin{cases}\frac{q!}{(q-r)!} I_{q-r}(f) & \text { if } r \leq q,\\ 0 & \text { otherwise,}\end{cases}
\end{equation}
where $I_{q-r}(f)$ is computed with respect to the first $q-r$ variables.
\item  \textit{Reintegration formula:} for every $q \geq 1$ and $f\in \HHH^{\odot  q}$,
\begin{equation}\label{eqn:reintegration}
I_q(f)=\delta(I_{q-1}(f)).
\end{equation}
\end{itemize}

\subsection{Existence of the solution and statement of the main conditions}\label{subsec:existencesol}

From now on, we fix an abstract Wiener space $(\Omega,\mathcal{F},\P,\mathfrak{H})$ and assume that the driving process $F$ of \eqref{eq:SDEsDrivenF} takes the following form.

\begin{Def}\label{def:drivingprocess}
An $\R^m$-valued stochastic process $\{F_t\}_{t \geq 0}$ is called a \emph{chaotic process} (over the abstract Wiener space $(\Omega,\mathcal{F},\P,\mathfrak{H})$) if there exists $q \in \N$ such that, for every $t \geq 0$ and every $1 \leq \ell \leq m$, one can find $f_t^\ell \in \mathfrak{H}^{\odot  q}$ satisfying
\begin{equation}\label{eqn:intfordrivingprocess}
F_t=(I_q(f_t^1),\dots,I_q(f_t^m)).
\end{equation}
\end{Def}

The aim of the present subsection is to discuss the existence and uniqueness of a solution to \eqref{eq:SDEsDrivenF}. For $p \in \N$, we write $\mathcal{C}^p_b(\R^d)$ for the set of real-valued functions on $\R^d$ that are $p$-times continuously differentiable with bounded partial derivatives up to order $p$. We now impose the following assumption on the coefficients $(b_k)_{1 \leq k \leq d}$ and $(\sigma_{k,\ell})_{1 \leq k \leq d, 1 \leq \ell \leq m}$ of \eqref{eq:SDEsDrivenF}.

\begin{center}
$\mathbf{(H_{1})}$ For all $1 \leq k \leq d$, and $1 \leq \ell \leq m$, $b_k, \sigma_{k,\ell}\in \mathcal{C}^3_b(\R^d)$.
\end{center}

In \cite{MR1893308}, the authors consider the deterministic differential equation on $\mathbb{R}^{d}$ driven by an $\R^m$-valued function $\varphi$,
\begin{equation}\label{eq:DeterministicSDE}
x_{t}^{k}=x_{0}^{k}+\int_{0}^{t} b_{k}\left(x_{s}\right) \mathrm{d} s+\sum_{\ell=1}^{m} \int_{0}^{t} \sigma_{k,\ell}\left(x_{s}\right) \mathrm{d} \varphi_{s}^{\ell},
\end{equation}
with $1 \leq k \leq d$, $t \in[0, T]$ and $x_0\in\R^d$, and they establish its existence and uniqueness in terms of the function spaces recalled below.

\begin{Def}\label{def:functionspaces}
Let $T>0$. For $\alpha \in (0,\tfrac{1}{2})$ and $\theta \in (0,1)$, we introduce the following spaces of measurable functions:
\begin{enumerate}[(a)]
\item the space $W_{1}^{\alpha}(0, T; \mathbb{R}^{d})$ of functions $f:[0, T] \rightarrow \mathbb{R}^{d}$ satisfying
$$
\|f\|_{\alpha, 1}:=\sup _{t \in[0, T]} \left( |f(t)|+\int_{0}^{t} \frac{|f(t)-f(s)|}{|t-s|^{\alpha+1}} \mathrm{~d} s\right)<\infty;
$$
\item the space $W_{2}^{1-\alpha}(0, T; \mathbb{R}^{m})$ of functions $\varphi:[0, T] \rightarrow \mathbb{R}^{m}$ satisfying
$$
\|\varphi\|_{1-\alpha, 2}:=\sup _{0 \leq s<t \leq T}\left(\frac{|\varphi(t)-\varphi(s)|}{(t-s)^{1-\alpha}}+\int_{s}^{t} \frac{|\varphi(\tau)-\varphi(s)|}{(\tau-s)^{2-\alpha}} \mathrm{d} \tau\right)<\infty;
$$
\item the $\theta$-H\"older space $C^{\theta}(0, T; \mathbb{R}^{d})$ of functions $f:[0, T] \rightarrow \mathbb{R}^{d}$ satisfying
$$
\|f\|_{\theta}:= \sup_{t \in [0,T]} |f(t)|+\sup _{0 \leq s<t \leq T} \frac{|f(t)-f(s)|}{(t-s)^{\theta}} < \infty.
$$
\end{enumerate}
\end{Def}

\begin{Rmk}
For any $T>0$, $\alpha \in (0,\frac{1}{2})$ and $\gamma >0$ such that $1-\alpha + \gamma \leq 1$, one has the continuous embeddings
\begin{equation}\label{eq:injection1}
C^{\alpha+\gamma}\left(0, T; \mathbb{R}^{d}\right) \subset W_{1}^{\alpha}\left(0, T; \mathbb{R}^{d}\right)
\end{equation}
and
\begin{equation}\label{eq:injection2}
C^{1-\alpha+\gamma}\left(0, T; \mathbb{R}^{m}\right) \subset W_{2}^{1-\alpha}\left(0, T; \mathbb{R}^{m}\right) \subset C^{1-\alpha}\left(0, T; \mathbb{R}^{m}\right).
\end{equation}
\end{Rmk}

Our starting point is \cite[Theorem 5.1]{MR1893308}, which asserts that, for any $\alpha \in (0,\frac{1}{2})$ and under assumptions on the coefficients weaker than $\mathbf{(H_1)}$, if $\varphi \in W_{2}^{1-\alpha}\left(0, T; \mathbb{R}^{m}\right)$, then the deterministic differential equation \eqref{eq:DeterministicSDE} admits a unique solution $x \in W_{1}^{\alpha}\left(0, T; \mathbb{R}^{d}\right) \cap C^{1-\alpha}\left(0, T; \mathbb{R}^{d}\right)$. To apply this result to the stochastic differential equation \eqref{eq:SDEsDrivenF}, we need to ensure that (a version of) the driving process $F$ has trajectories in some space $W_{2}^{1-\alpha}\left(0, T; \mathbb{R}^{m}\right)$. This leads us to impose the following assumption on the chaotic process $F$ given by \eqref{eqn:intfordrivingprocess}.

\begin{center}
$\mathbf{(H_{2})}$ There exists $H \in (\frac{1}{2},1)$ such that, for any $1 \leq \ell \leq m$, there exists a deterministic constant $c_\ell>0$ for which, for any $s,t \in [0,T]$,
\begin{equation}\label{eq:HolderF}
	\|f^\ell_{t}-f^\ell_{s}\|_{\HHH^{\otimes q}}\leq c_\ell\,|t-s|^{H}.
\end{equation}
\end{center}
Whenever $\mathbf{(H_{2})}$ holds, the isometry formula \eqref{eq:isom} together with the hypercontractivity property \eqref{eq:hyper} yield, for every $1 \leq \ell \leq m$, $0 \leq j \leq q$, $p \geq 2$ and $s,t \in [0,T]$,
\begin{equation}\label{eq:HolderF0}
	\E\left[ \left\|D^j(F_t^\ell-F_s^\ell)\right\|_{\mathfrak{H}^{\otimes j}}^p\right]\leq c_{\ell,j,p}\,|t-s|^{pH}.
\end{equation}
From \eqref{eq:HolderF0} and Kolmogorov's continuity theorem, we deduce that, for every $0 \leq j \leq q$, the process $\{D^j F_t\}_{t \in [0,T]}$ admits a version whose trajectories are $\theta$-H\"{o}lder continuous as maps from $[0,T]$ to $\mathfrak{H}^{\otimes j}$\, for every $\theta\in (0, H)$. In the sequel, whenever $\mathbf{(H_{2})}$ is in force, we shall always work with these versions, still denoted $\{D^j F_t\}_{t \in [0,T]}$. In particular, for any $\beta \in (0,H-\frac{1}{2})$ and $\gamma \in (0,\beta)$, we have $H-\beta+\gamma \leq 1$, and the trajectories of $\{F_t\}_{t \in [0,T]}$ therefore belong to $C^{H-\beta + \gamma}\left(0, T; \mathbb{R}^{m}\right)\subseteq W_{2}^{H-\beta}\left(0, T; \mathbb{R}^{m}\right)$, so that \cite[Theorem 5.1]{MR1893308} can be applied with $1-\alpha=H-\beta$.

We close this subsection by recalling \cite[Proposition 4]{nualart2009malliavin}, which plays a key role throughout our analysis.

\begin{Prop}\label{pro:Nualart}
Fix $\alpha \in (0,\frac{1}{2})$ and $T>0$, and let $x(\varphi)$ denote the solution to \eqref{eq:DeterministicSDE} driven by $\varphi \in W_{2}^{1-\alpha}\left(0, T; \mathbb{R}^{m}\right)$. Under hypothesis $\mathbf{(H_{1})}$, the mapping
\begin{equation}\label{eqn:frechetdifferentiabilityPSI}
\Psi \, : \, W_{2}^{1-\alpha}\left(0, T; \mathbb{R}^{m}\right) \to W_{1}^{\alpha}\left(0, T; \mathbb{R}^{d}\right) \, : \, \varphi \mapsto x(\varphi)
\end{equation}
is Fréchet differentiable and for any $\varphi, \psi \in W_{2}^{1-\alpha}\left(0, T ; \mathbb{R}^{m}\right)$ the Fr\'echet derivative of $\Psi$ at $\varphi$ in the direction $\psi$ is given by
$$
\mathcal{D} \Psi (\varphi) [\psi] \, : \, t \mapsto \left( \sum_{\ell=1}^{m} \int_{0}^{t} \Theta_{t}^{k,\ell}(s) \mathrm{d} \psi_{s}^{l}\right)_{1 \leq k \leq d},
$$
where for $1 \leq k \leq d$, $1 \leq \ell \leq m$,  $0 \leq s \leq t \leq T$, $\Theta_{t}^{k,\ell}(s)$ satisfies
$$
\Theta_{t}^{k,\ell}(s)=\sigma_{k,\ell}\left(x_{s}\right)+\sum_{p=1}^{d} \int_{s}^{t} \partial_{p} b_{k}\left(x_{u}\right) \Theta_{u}^{p, \ell}(s) \mathrm{d} u+\sum_{p=1}^{d} \sum_{q=1}^{m} \int_{s}^{t} \partial_{p} \sigma^{k,q}\left(x_{u}\right) \Theta_{u}^{p, \ell}(s) d\varphi_{u}^{q},
$$
and $\Theta_{t}^{k ,\ell}(s)=0$ if $s>t$.
\end{Prop}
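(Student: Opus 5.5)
The statement is Proposition~\ref{pro:Nualart}, which the paper recalls from \cite{nualart2009malliavin}, so one could simply cite it. To prove it directly I would use the implicit function theorem on Banach spaces, working in the framework of \cite{MR1893308}.

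\textbf{Step 1: set up the equation as a zero of a map.} Fix $\alpha\in(0,\tfrac12)$. Define
\[
\mathcal{G}\colon W_2^{1-\alpha}(0,T;\R^m)\times W_1^{\alpha}(0,T;\R^d)\to W_1^{\alpha}(0,T;\R^d)
\]
by
\[
\mathcal{G}(\varphi,x)_t = x_t - x_0 - \int_0^t b(x_s)\,ds - \int_0^t \sigma(x_s)\,d\varphi_s .
\]
The Riemann--Stieltjes (Young) integral is handled through the fractional integration-by-parts representation of \cite{MR1893308}. That representation gives the estimate
\[
\Big\|\int_0^\star f\,d\varphi\Big\|_{\alpha,1}\le C\,\|\varphi\|_{1-\alpha,2}\,\|f\|_{\alpha,1}.
\]
By \cite[Theorem 5.1]{MR1893308}, the solution $x(\varphi)$ is the unique zero of $\mathcal{G}(\varphi,\cdot)$.

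\textbf{Step 2: show $\mathcal{G}$ is $C^1$.} The integral is bilinear and continuous in $(\varphi,f)$, so it suffices to treat the Nemytskii map $x\mapsto\sigma(x_\star)$ on $W_1^\alpha$. Hypothesis $\mathbf{(H_1)}$ gives $\sigma\in\mathcal{C}^3_b$. I would use the second-order Taylor remainder
\[
\sigma(x+h)-\sigma(x)-\nabla\sigma(x)h=\int_0^1(1-\lambda)\,\nabla^2\sigma(x+\lambda h)[h,h]\,d\lambda
\]
and estimate its $W_1^\alpha$ norm by $C(1+\|x\|_{\alpha,1}+\|h\|_{\alpha,1})\|h\|_{\alpha,1}^2$. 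This is the main obstacle: the $W_1^\alpha$ seminorm involves increments. One must control quantities like
\[
|g(x_t)h_t - g(x_s)h_s|\le |g(x_t)-g(x_s)|\,|h_t| + |g(x_s)|\,|h_t-h_s|
\]
using bounded derivatives of $g=\nabla^2\sigma$. This is where three bounded derivatives are needed, so that the derivative map $x\mapsto\nabla\sigma(x)$ is itself Lipschitz in $W_1^\alpha$. The drift term is easier, since $\int_0^\star\cdot\,ds$ is a bounded operator on $W_1^\alpha$.

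\textbf{Step 3: invert $\partial_x\mathcal{G}$.} Fix $(\varphi,x)$ with $x=x(\varphi)$. The partial derivative $\partial_x\mathcal{G}(\varphi,x)[y]=z$ is the linear Young equation
\[
y_t-\int_0^t\nabla b(x_s)y_s\,ds-\int_0^t\nabla\sigma(x_s)y_s\,d\varphi_s=z_t .
\]
It has a unique solution depending continuously on $z$. I would prove this by the same contraction argument as in \cite{MR1893308}, with the $\lambda$-weighted norm $\sup_t e^{-\lambda t}(\cdots)$ and $\lambda$ large. Linearity gives global solvability, so $\partial_x\mathcal{G}$ is an isomorphism. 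The implicit function theorem then shows that $\Psi$ is $C^1$. Its derivative $\mathcal{D}\Psi(\varphi)[\psi]=y$ solves the linear equation with source term $z_t=\int_0^t\sigma(x_s)\,d\psi_s$.

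\textbf{Step 4: identify the formula.} The Jacobian $\Phi_t$ solves
\[
d\Phi_t=\nabla b(x_t)\Phi_t\,dt+\nabla\sigma(x_t)\Phi_t\,d\varphi_t,\qquad \Phi_0=I .
\]
Set $\Theta_t(s)=\Phi_t\Phi_s^{-1}\sigma(x_s)$; the inverse $\Phi_s^{-1}$ solves the adjoint linear equation. By the Young chain rule (Fubini for Riemann--Stieltjes integrals), $y_t=\int_0^t\Theta_t(s)\,d\psi_s$ satisfies the linear equation for $y$. Moreover, $\Theta_t(s)$ satisfies the stated integral equation in $t\ge s$, and we set it to zero for $s>t$. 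Uniqueness in Step 3 identifies $y$ with $\mathcal{D}\Psi(\varphi)[\psi]$.
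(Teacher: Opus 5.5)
Your proposal is correct and follows essentially the argument behind the cited result \cite[Proposition 4]{nualart2009malliavin}, whose structure the paper records in Proposition~\ref{pro:Nualartbis}. That argument applies the implicit function theorem to $(\psi,x)\mapsto x-x_0-\int_0^\star b(x_s)\,ds-\int_0^\star\sigma(x_s)\,d(\varphi_s+\psi_s)$, which is your $\mathcal{G}$ up to the shift $\varphi\mapsto\varphi+\psi$, and gives $\mathcal{D}\Psi(\varphi)=-\mathcal{D}_2T(0,x)^{-1}\circ\mathcal{D}_1T(0,x)$. Your additional details correctly fill in that citation: the $\mathcal{C}^3_b$ Nemytskii estimate, the $\lambda$-weighted contraction showing $\partial_x\mathcal{G}$ is an isomorphism of $W_1^\alpha$ onto itself (the form the implicit function theorem needs), and the factorization $\Theta_t(s)=\Phi_t\Phi_s^{-1}\sigma(x_s)$ used to identify the derivative.
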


We also state the following facts, gathered from \cite[Lemma 3]{nualart2009malliavin} and the proof of \cite[Proposition 4]{nualart2009malliavin}. They will play a crucial role in our proof of the absolute continuity of the law of the solution to \eqref{eq:SDEsDrivenF} in Section \ref{sec:density}.

\begin{Prop}\label{pro:Nualartbis}
Under the conditions and with the notations of Proposition \ref{pro:Nualart}, the mapping
\begin{align}\label{eqn:Nualartbis}
T & \, : \, W_{2}^{1-\alpha}\left(0, T; \mathbb{R}^{m}\right) \times  W_{1}^{\alpha}\left(0, T; \mathbb{R}^{d}\right) \to  W_{1}^{\alpha}\left(0, T; \mathbb{R}^{d}\right) \nonumber \\ &\, : \, (\psi,x) \mapsto x-x_0-\int_0^\star b(x_s) ds - \int_0^\star \sigma(x_s) d(\varphi_s+\psi_s)
\end{align}
is Fréchet differentiable with, for any $(\psi,x) \in W_{2}^{1-\alpha}\left(0, T; \mathbb{R}^{m}\right) \times  W_{1}^{\alpha}\left(0, T; \mathbb{R}^{d}\right)$, $\phi \in W_{2}^{1-\alpha}\left(0, T; \mathbb{R}^{m}\right) $ and $y \in W_{1}^{\alpha}\left(0, T; \mathbb{R}^{d}\right)$, for all $1 \leq k \leq d$,
\begin{equation}\label{eqn:Nualartbis:frechet1}
(\mathcal{D}_1 T (\psi,x) [\phi])^k = - \sum_{\ell=1}^m \int_0^\star \sigma_{k,\ell}(x_s) d\phi_s^\ell
\end{equation}
and
\begin{equation}\label{eqn:Nualartbis:frechet2}
(\mathcal{D}_2 T (\psi,x) [y])^k=y^k-\sum_{p=1}^d \int_0^\star \partial_p b_k(x_s)y_s^k ds - \sum_{p=1}^d \sum_{\ell=1}^m \int_0^\star \partial_k \sigma_{p,\ell} d(\varphi_s^\ell+\psi_s^\ell).
\end{equation}
Furthermore, $\mathcal{D}_2 T (0,x)$ is a linear homeomorphism from $W_{1}^{\alpha}\left(0, T; \mathbb{R}^{d}\right)$ to\\  $C^{1-\alpha}\left(0, T; \mathbb{R}^{d}\right)$ and we have, for any $\varphi \in W_{2}^{1-\alpha}\left(0, T ; \mathbb{R}^{m}\right)$
\begin{equation}\label{eqn:Nualartbis:implict}
\mathcal{D} \Psi (\varphi)=-\mathcal{D}_2 T (0,x)^{-1} \circ \mathcal{D}_1 T (0,x).
\end{equation}
\end{Prop}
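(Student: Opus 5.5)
This is essentially an implicit-function-theorem statement, so I would recover it by redoing the argument behind \cite[Lemma 3 and Proposition 4]{nualart2009malliavin}.

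\textbf{Step 1: differentiability of $T$.} The map $T$ splits as the identity in $x$, minus the drift integral $x\mapsto\int_0^\star b(x_s)\,ds$, minus the bilinear-type Young integral $(\psi,x)\mapsto\int_0^\star \sigma(x_s)\,d(\varphi_s+\psi_s)$.

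The drift term is handled by a first-order Taylor expansion of $b$. The remainder is controlled in $\|\cdot\|_{\alpha,1}$ by $\|y\|_{\alpha,1}^2$, because $b\in\mathcal{C}^2_b$ under $\mathbf{(H_1)}$.

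For the integral term, I would use the fractional-calculus estimate from \cite{MR1893308}:
\[
\Big\|\int_0^\star g\,d\phi\Big\|_{\alpha,1}\le C\,\|\phi\|_{1-\alpha,2}\,\|g\|_{\alpha,1}.
\]
This estimate does three jobs.
\begin{itemize}
\item It shows that the integral is linear and continuous in $\psi$. This gives \eqref{eqn:Nualartbis:frechet1} at once, since that term is already linear in $\phi$.
\item It reduces differentiability in $x$ to showing that the Nemytskii operator $x\mapsto\sigma(x)$ is Fréchet differentiable on $W_1^\alpha$. The derivative is $y\mapsto\nabla\sigma(x)y$.
\item Combined with the previous point, it yields \eqref{eqn:Nualartbis:frechet2}. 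Continuity of the partial derivatives follows in the same way, which gives joint Fréchet differentiability.
\end{itemize}

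To prove differentiability of the Nemytskii operator, I would write the remainder as
\[
\int_0^1\big(\nabla\sigma(x+\lambda y)-\nabla\sigma(x)\big)\,y\,d\lambda .
\]
I would then estimate its $W_1^\alpha$ seminorm using the boundedness of the second and third derivatives of $\sigma$. The third derivative is needed because the seminorm contains increments of $\nabla\sigma(x+\lambda y)-\nabla\sigma(x)$ in time.

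\textbf{Step 2: $\mathcal{D}_2T(0,x)$ is a linear homeomorphism.} Fix a target $z$ and solve $\mathcal{D}_2T(0,x)[y]=z$. This is a linear equation
\[
y=z+\int_0^\star A_s\,y_s\,ds+\int_0^\star B_s\,y_s\,d\varphi_s ,
\]
where the coefficients $A=\nabla b(x)$ and $B=\nabla\sigma(x)$ lie in $W_1^\alpha$. Existence and uniqueness in $W_1^\alpha$ follow from the linear case of \cite[Theorem 5.1]{MR1893308}: one runs a contraction argument with the weighted norm $\sup_t e^{-\lambda t}(\cdots)$, choosing $\lambda$ large. The same argument gives an a priori bound $\|y\|\le C\|z\|$. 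Since $\mathcal{D}_2T(0,x)$ is also bounded (Step 1), the open mapping theorem then makes it a homeomorphism.

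The codomain needs care. For $x\in W_1^\alpha$, the image of $\mathcal{D}_2T(0,x)$ lies in $C^{1-\alpha}$, since a Young integral against $\varphi\in W_2^{1-\alpha}$ is $(1-\alpha)$-Hölder. The homeomorphism statement should therefore be read on the relevant closed subspace.

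\textbf{Step 3: the formula \eqref{eqn:Nualartbis:implict}.} By definition of $\Psi$, the solution $x=\Psi(\varphi+\psi)$ satisfies $T(\psi,\Psi(\varphi+\psi))=0$. The implicit function theorem, applied at $(0,x(\varphi))$ using Steps 1 and 2, gives a $C^1$ local solution map $\psi\mapsto x(\psi)$. Uniqueness of solutions to the deterministic equation identifies this map with $\psi\mapsto\Psi(\varphi+\psi)$. Differentiating the identity $T(\psi,x(\psi))=0$ at $\psi=0$ yields
\[
\mathcal{D}_1T(0,x)+\mathcal{D}_2T(0,x)\circ\mathcal{D}\Psi(\varphi)=0,
\]
which is \eqref{eqn:Nualartbis:implict}. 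Finally, I would check that this is consistent with the $\Theta$ representation of Proposition \ref{pro:Nualart}, via a variation-of-constants argument.

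\textbf{Main obstacle.} I expect the hardest part to be the Nemytskii differentiability in the $W_1^\alpha$ norm in Step 1, together with the compatibility of the norms in Step 2. There I would first try the interpolation estimates of \cite[Lemma 7.1]{MR1893308} on $|f(x_t)-f(x_s)-f(y_t)+f(y_s)|$.
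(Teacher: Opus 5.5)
Your route is the one the paper relies on. The paper gives no argument of its own and defers to \cite[Lemma 3 and Proposition 4]{nualart2009malliavin}, which is an implicit-function-theorem argument of exactly this type: Fréchet differentiability of $T$ from the Young-integral bound and the Nemytskii estimates under $\mathbf{(H_1)}$, invertibility of $\mathcal{D}_2T(0,x)$ via the linear equation, then differentiation of $T(\psi,\Psi(\varphi+\psi))=0$. Your Steps 1 and 3 and the contraction part of Step 2 are sound. Your form of $\mathcal{D}_2T$, with $A=\nabla b(x)$ and $B=\nabla\sigma(x)$ acting on $y$, is the correct reading of \eqref{eqn:Nualartbis:frechet2}, whose printed indices are garbled.

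The one thing to fix is the codomain remark at the end of Step 2. It is false, and it contradicts what you have just proved.

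\begin{itemize}
\item Since $\mathcal{D}_2T(0,x)[y]=y-\int_0^\star A_s y_s\,ds-\int_0^\star B_s y_s\,d\varphi_s$ contains the identity term, only the difference $y-\mathcal{D}_2T(0,x)[y]$ is $(1-\alpha)$-H\"older. For $y\in W_1^\alpha\setminus C^{1-\alpha}$ (e.g.\ first component $t^\theta$ with $\alpha<\theta<1-\alpha$), the image is not in $C^{1-\alpha}$.
\item Your own contraction solves $\mathcal{D}_2T(0,x)[y]=z$ for every $z\in W_1^\alpha$, so the range is all of $W_1^\alpha$.
\item $C^{1-\alpha}$ is not a closed subspace of $W_1^\alpha$: $(t+1/n)^\theta-n^{-\theta}\to t^\theta$ in $\|\cdot\|_{\alpha,1}$, at rate $n^{\alpha-\theta}$. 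So ``reading the statement on the relevant closed subspace'' is not available.
\end{itemize}

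What you have actually shown is that $\mathcal{D}_2T(0,x)$ is a linear homeomorphism of $W_1^\alpha(0,T;\R^d)$ onto itself. That is also exactly what Step 3 needs: the implicit function theorem requires $\mathcal{D}_2T(0,x)$ to be an isomorphism onto the space where $T$ takes values, namely $W_1^\alpha$. The ``$W_1^\alpha$ to $C^{1-\alpha}$'' in the statement cannot hold as printed.

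If you want a $C^{1-\alpha}$ statement, prove it as an additional fact. You have $\|y-\mathcal{D}_2T(0,x)[y]\|_{1-\alpha}\le C\|y\|_{\alpha,1}$, and $C^{1-\alpha}\hookrightarrow W_1^\alpha$ continuously because $1-\alpha>\alpha$. Hence the same operator maps $C^{1-\alpha}$ bijectively and bicontinuously onto $C^{1-\alpha}$, with each space carrying its own norm.
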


\subsection{Roadmap through an alternative definition of Malliavin calculus}\label{sub:malialter}

In Subsection \ref{subsec:abstractwienerspaces}, we sketched Shigekawa's definition (originally introduced by Malliavin \cite{MR0536013} and Shigekawa \cite{MR0582167}) of the Sobolev-type spaces $\mathbb{D}^{m,p}(\mathcal{U})$, where $\mathcal{U}$ is a separable Hilbert space. While this is the most commonly used definition in the literature, these spaces admit several equivalent characterisations. The one we shall exploit here is due to Stroock \cite{MR0603973} and Kusuoka \cite{MR0657873} (see also Sugita \cite{MR0810975}), who introduced the Sobolev spaces $\hat{\mathbb{D}}^{m,p}(\mathcal{U})$ (see the definition below). Their definition is based on the following two properties.

Denoting by $\mathcal{L}_{HS}(j(\mathfrak{H}),\mathcal{U})$ the space of Hilbert--Schmidt operators from $j(\mathfrak{H})$ to $\mathcal{U}$, these two properties read as follows.

\begin{Def}\label{def:RACSGD}
A $\mathcal{U}$-valued random variable $F : \Omega \to \mathcal{U}$ is said to be
\begin{enumerate}[(a)]
\item \emph{ray absolutely continuous} \textbf{(RAC)} if, for every $h\in \mathfrak{H}$, there exists a $\mathcal{U}$-valued random variable $\hat{F}_h : \Omega \to \mathcal{U}$ such that $F=\hat{F}_h$ $\mathbb{P}$-a.s.\ and, for every $\omega \in \Omega$, the map $\mathbb{R} \ni\varepsilon\mapsto \hat{F}_h(\omega+\varepsilon\,j(h))$ is absolutely continuous;
\item \emph{stochastically Gâteaux differentiable} \textbf{(SGD)} if there exists a random variable $G: \Omega\to \mathcal{L}_{HS}(j(\mathfrak{H}),\mathcal{U})$ such that, for every $h\in \mathfrak{H}$,
\begin{equation}\label{eq:SGD}
\frac{1}{\varepsilon}(F(\bullet + \varepsilon\,j(h))-F) \overset{\mathbb{P}}{\longrightarrow} G(\bullet)[j(h)] \qquad\text{ as } \varepsilon\to 0.
\end{equation}
The operator $G$ is unique $\mathbb{P}$-a.s., and is denoted by $\hat{D}F$. Higher-order derivatives are defined inductively: if $\hat{D}^{n-1}F$ is \textbf{(SGD)}, then $\hat{D}^{n}F:=\hat{D}(\hat{D}^{n-1}F)$.
\end{enumerate}
\end{Def}

The Sobolev-type spaces $\hat{\mathbb{D}}^{m,p}(\mathcal{U})$, for $m\in \N^*$ and $1<p<\infty$, are then defined inductively. First, for $m=1$,
\begin{equation}\label{eq:TildeD1p}
	\hat{\mathbb{D}}^{1,p}(\mathcal{U})=\{F\in L^p(\mathcal{U})\,;\; F \text{ is RAC and SGD, } \hat{D}F \in L^p\left(\mathcal{L}_{HS}(j(\mathfrak{H}),\mathcal{U})\right)\},
\end{equation}
and, for $m >1$, one then sets analogously
\begin{equation}\label{eq:TildeDmp}
	\hat{\mathbb{D}}^{m,p}(\mathcal{U})=\{F\in \hat{\mathbb{D}}^{m-1,p}(\mathcal{U})\,;\;  \hat{D}F \in \hat{\mathbb{D}}^{m-1,p}\left(\mathcal{L}_{HS}(j(\mathfrak{H}),\mathcal{U}))\right)\}.
\end{equation}

The following theorem shows that Kusuoka-Stroock approach and Shigekawa's definition lead to the same spaces.
\begin{Thm}[Theorem 3.1 in \cite{MR0810975}]\label{thm:KusuokaStroock=Shigekawa}
	For $m\in \N^*$ and $1<p<\infty$, we have $\hat{\mathbb{D}}^{m,p}(\mathcal{U})=\mathbb{D}^{m,p}(\mathcal{U})$, and for any $F$ in this space, we have, for a.e. $\omega\in\Omega$, and for any $h\in \HHH$,
	\begin{equation}\label{eq:TildeDEqualD}
		\hat{D}^{k}F(\omega)[j(h),\ldots,j(h)]=\left<D^{k}F(\omega),h^{\otimes k}\right>_{\HHH^{\otimes k}}, \qquad \text{ for } 1 \leq k \leq m. 
	\end{equation}
\end{Thm}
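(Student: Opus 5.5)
The proof goes by induction on $m$. The base case $m=1$ carries all the work, and I will compare the two derivatives only through the directional identity \eqref{eq:TildeDEqualD}, tested against $h\in\HHH$.

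\emph{Step 1: $\mathcal{S}_{\mathcal{U}}\subset \hat{\mathbb{D}}^{1,p}(\mathcal{U})$, with $\hat D=D$ on $\mathcal{S}_{\mathcal{U}}$.} For $F=f(X_{h_1},\dots,X_{h_n})$ with $h_i=j^*(y_i)$, we have $X_{h_i}(\omega+\epsilon j(h))=X_{h_i}(\omega)+\epsilon\langle h_i,h\rangle_{\HHH}$ by \eqref{eqn:operateurdual}. Hence $\epsilon\mapsto F(\omega+\epsilon j(h))$ is smooth for every $\omega$, which gives (RAC). Its derivative at $\epsilon=0$ is $\sum_i\partial_i f(\cdots)\langle h_i,h\rangle=\langle DF,h\rangle_{\HHH}$, which gives (SGD) with $\hat DF[j(h)]=\langle DF,h\rangle$. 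For general $h_i\in\HHH$ I approximate $h_i$ by elements of $j^*(\Omega^*)$. Shigekawa's $D$ does not depend on the chosen version of $X_{h_i}$, so it suffices to choose versions for which the shift identity holds a.s.; linear versions along $j(h)$ exist by the Cameron--Martin theorem.

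\emph{Step 2: $\mathbb{D}^{1,p}\subset\hat{\mathbb{D}}^{1,p}$.} Take $F_n\in\mathcal{S}_{\mathcal{U}}$ with $F_n\to F$ in $\|\cdot\|_{1,p}$. The tool is the Cameron--Martin formula: the law of $\omega+\epsilon j(h)$ is equivalent to $\P$, with density $\exp(\epsilon X_h-\epsilon^2\|h\|^2/2)$ having moments of all orders. By Hölder's inequality, convergence in $L^p$ is preserved under shifts, uniformly for $\epsilon$ in compacts. Passing to a subsequence, we get for a.e.\ $\omega$ that $F_n(\omega+\epsilon j(h))\to \hat F_h(\omega+\epsilon j(h))$ in $L^1_{loc}(d\epsilon)$, together with convergence of the $\epsilon$-derivatives $\langle DF_n,h\rangle(\omega+\epsilon j(h))$. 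The limit is then absolutely continuous in $\epsilon$, with derivative $\langle DF,h\rangle(\cdot+\epsilon j(h))$. This is where a Fubini argument on $\Omega\times\R$ is needed, and a version $\hat F_h$ must be constructed that works for every $\omega$ (e.g.\ by integrating the derivative along lines). Writing
\[
\tfrac1\epsilon\bigl(F(\bullet+\epsilon j(h))-F\bigr)=\int_0^1\langle DF,h\rangle(\bullet+s\epsilon j(h))\,ds
\]
and using continuity of shifts in $L^p$ then gives (SGD) with $\hat DF[j(h)]=\langle DF,h\rangle$.

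\emph{Step 3: $\hat{\mathbb{D}}^{1,p}\subset \mathbb{D}^{1,p}$.} This is the step I expect to be the main obstacle, since (SGD) is a very weak notion. I would first show that $\hat D$ satisfies the integration-by-parts identity
\[
\E\bigl[\hat DF[j(h)]\,G\bigr]=\E\bigl[F\,\delta(Gh)\bigr], \qquad G\in\mathcal{S}.
\]
The argument uses (RAC) to write $F(\omega+\epsilon j(h))-F(\omega)$ as an integral, changes variables via Cameron--Martin, and differentiates at $\epsilon=0$. Differentiating under the expectation is justified by uniform integrability, which I would obtain by applying the (RAC) representation along the ray, with $\hat DF\in L^p$. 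This identity says that $F$ lies in the domain of the adjoint of $\delta$ restricted to $\mathcal{S}$, i.e.\ of the closure $\bar D$. For $p=2$ this gives $F\in\mathbb{D}^{1,2}$ with $DF=\hat DF$ directly. For general $p$ I would regularise with the Ornstein--Uhlenbeck semigroup $P_t$: duality gives $DP_tF=e^{-t}P_t\hat DF$, and Meyer's inequalities then show $P_tF\in\mathbb{D}^{1,p}$. Combining $\|DP_tF\|_{L^p}\le\|\hat DF\|_{L^p}$ with closability of $D$ yields $F\in\mathbb{D}^{1,p}$ as $t\to0$.

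\emph{Step 4: induction and the identity.} Apply Steps 1--3 to the $\mathcal{L}_{HS}(j(\HHH),\mathcal{U})$-valued variable $\hat DF=DF$; this space is isometric to $\HHH\otimes\mathcal{U}$. The definitions \eqref{eq:TildeD1p}--\eqref{eq:TildeDmp} then give equality of the spaces for every $m$. Iterating the directional identity yields \eqref{eq:TildeDEqualD} for all $k\le m$.
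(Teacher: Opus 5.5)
The paper contains no proof of this statement. It is quoted from Sugita (Theorem~3.1 of the cited reference) and used as a black box, so there is no internal argument to compare yours with. On its own merits, your outline follows the standard route to this equivalence and is correct in substance. One inclusion uses Cameron--Martin quasi-invariance and approximation by cylindrical functionals; the other uses integration by parts along rays, Ornstein--Uhlenbeck regularisation and the $L^p$ smoothing property of $P_t$. In Step~1 you can avoid versions entirely: cylindrical functionals built on $h_i\in j^*(\Omega^*)$ are already dense in $\mathbb{D}^{1,p}(\mathcal{U})$. Otherwise the versions of Proposition~\ref{prop:casdebase} suffice, and no Cameron--Martin argument is needed there.

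Several points should be tightened.

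(i) Shifts do not preserve $L^p$. Take $\|h\|_{\HHH}=1$ and $|G|^p=e^{X_h^2/2}/(1+X_h^2)$: then $G\in L^p$, but $\E|G(\bullet+\varepsilon j(h))|^p=\infty$ for every $\varepsilon\neq0$. H\"older and the Cameron--Martin density give boundedness, convergence and continuity of shifts only in $L^r$ for $r<p$, uniformly for $\varepsilon$ in compacts. This is all Steps~2 and~3 use, but it should be stated that way.

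(ii) In Step~2 the version can be built as follows. Bound $\E\sup_{|\varepsilon|\le K}|F_n-F_{n'}|(\bullet+\varepsilon j(h))$ using the fundamental theorem of calculus and Cameron--Martin. Extract an a.s.\ locally uniformly convergent subsequence, with $L^1_{loc}$-convergence of the ray derivatives. This convergence set is invariant under $\R j(h)$, so set $\hat F_h$ equal to the limit there and $0$ elsewhere.

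(iii) In Step~3 you must identify the ray derivative supplied by (RAC) with $\hat DF[j(h)](\omega+\varepsilon j(h))$ for $\P\otimes d\varepsilon$-a.e.\ $(\omega,\varepsilon)$. This follows from (SGD) along a subsequence, Fubini and quasi-invariance. It is exactly where (RAC) and (SGD) are tied together, so it deserves an explicit line. You also actually have $DP_tF=e^{-t}P_t\hat DF\to\hat DF$ in $L^p$, which gives $F\in\mathbb{D}^{1,p}$ with $DF=\hat DF$ directly. A uniform bound alone would need a weak-compactness argument, not just closability. Your separate $p=2$ remark relies on $\mathcal{S}_{\HHH}$ being a core for $\delta$ (true, by a chaos computation), but it is redundant.

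(iv) Step~4 does not follow from the definitions alone, because $\mathbb{D}^{m,p}(\mathcal{U})$ is defined as a closure. You need the characterisation $\mathbb{D}^{m,p}(\mathcal{U})=\{F\in\mathbb{D}^{1,p}(\mathcal{U}):DF\in\mathbb{D}^{m-1,p}(\HHH\otimes\mathcal{U})\}$. The same regularisation proves it: $P_tF\in\mathbb{D}^{m,p}(\mathcal{U})$, and $D^kP_tF=e^{-kt}P_tD^{k-1}(DF)\to D^{k-1}(DF)$ in $L^p$ for $1\le k\le m$.
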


In the present work, we shall rely on the Kusuoka-Stroock approach to establish the Malliavin differentiability of the solution $(X_t)_{t \in [0,T]}$ to the stochastic differential equation \eqref{eq:SDEsDrivenF}. Let us now briefly outline our strategy.

\paragraph{Strategy for \textbf{(SGD)}.}
For any $t \in [0,T]$, $1 \leq k \leq d$ and $h \in \mathfrak{H}$, we have to analyse the limit in probability, as $\varepsilon \to 0$, of
\begin{equation}\label{eqn:limiteacalculerpourSGD}
\frac{X_t^k(\bullet+\varepsilon j(h))-X_t^k(\bullet)}{\varepsilon}.
\end{equation}
Recall that, by Proposition \ref{pro:Nualart}, for $\varphi \in W_{2}^{1-\alpha}\left(0, T; \mathbb{R}^{m}\right)$, $\Psi(\varphi)$ denotes the solution to the deterministic equation \eqref{eq:DeterministicSDE} driven by $\varphi$ on $[0,T]$, and $\Psi_t(\varphi)$ is its value at time $t$. Under hypothesis $\mathbf{(H_2)}$, we can therefore write, for any $\omega \in \Omega$ and $\varepsilon \in \R$,
\[ X_t^k(\omega+\varepsilon j(h))= \Psi_t^k(F_\star(\omega+\varepsilon j(h)).\]
Since $\Psi$ is Fréchet differentiable by Proposition \ref{pro:Nualart}, a naive approach would consist in applying the chain rule and studying the differentiability of the map
\[ \Theta \, : \, \R \to W_2^{1-\alpha}(0,T;\R^m) \, : \, \varepsilon \mapsto F_\star(\omega + \varepsilon j(h)).\]
Unfortunately, this strategy is hopeless in our general setting. A good illustration is given by the standard case of the canonical Brownian motion on
\[ C_0([0,T]) := \{\omega \,: \, [0,T] \to \R \text{ continuous with } \omega_0=0\},\]
equipped with its Borel $\sigma$-algebra and the Wiener measure. It is shown in \cite[Subsection 1.B]{Nualart1990} that, for $f \in L^2([0,T))$, the random variable $\omega \mapsto I_1(f)(\omega)$ admits a continuous extension for the supremum norm if and only if $f$ has bounded variation. Since even the continuity of the map $\omega \mapsto I_1(f)(\omega)$ fails in such a basic situation, expecting $\Theta$ to be differentiable is far too optimistic.

However, since the limit in \eqref{eq:SGD} is only required to hold in probability, it is enough to replace \eqref{eqn:limiteacalculerpourSGD} with the limit in probability of
\begin{equation}\label{eqn:limiteacalculerpourSGD2}
\frac{\Gamma_{h,t}^k(\varepsilon,\bullet)-\Gamma_{h,t}^k(0,\bullet)}{\varepsilon},
\end{equation}
where:
\begin{itemize}
\item for each fixed $t \in [0,T]$, $h \in \mathfrak{H}$ and $\varepsilon \in \R$, we have, for almost every $\omega \in \Omega$,
\[ \Gamma_{h,t}(\varepsilon,\omega)=X_t(\omega + \varepsilon j(h)),\]
i.e.\ $\{\Gamma_{h,t}(\varepsilon,\bullet) \, : \, t \in [0,T], \, \varepsilon \in \R, \, h \in \mathfrak{H} \}$ is a version of $\{X_t(\bullet+ \varepsilon j(h)) \, : \, t \in [0,T], \, \varepsilon \in \R, \, h \in \mathfrak{H} \}$;
\item the limit \eqref{eqn:limiteacalculerpourSGD2} is more tractable than \eqref{eqn:limiteacalculerpourSGD}.
\end{itemize}
Since the map $\Theta$ was clearly the problematic ingredient above, we shall construct an $\R^m$-valued stochastic process $\{S_{t,\varepsilon}^h \, : \, t \in [0,T], \, \varepsilon \in \R, \, h \in \mathfrak{H} \}$ enjoying the following properties:
\begin{itemize}
\item for each $h \in \mathfrak{H}$ and $\varepsilon \in \R$, the map $t \mapsto S_{t,\varepsilon}^h$ is continuous;
\item for each $h \in \mathfrak{H}$ and $t \in [0,T]$, the map $\varepsilon \mapsto S_{t,\varepsilon}^h$ is continuously differentiable;
\item for every $h \in \mathfrak{H}$, $\varepsilon \in \R$ and $t \in [0,T]$, and for almost every $\omega \in \Omega$,
\[S_{t,\varepsilon}^h(\omega)=F_t(\omega + \varepsilon j(h)),\]
so that $\{S_{t,\varepsilon}^h \, : \, t \in [0,T], \, \varepsilon \in \R, \, h \in \mathfrak{H} \}$ is a version of $\{F_t(\bullet+ \varepsilon j(h)) \, : \, t \in [0,T], \, \varepsilon \in \R, \, h \in \mathfrak{H} \}$.
\end{itemize}

The first two subsections of Section \ref{sec:tools} are devoted to this task:
\begin{enumerate}
\item in Subsection \ref{sub:taylor}, we establish a Taylor formula for Malliavin calculus, which, in our setting, applies to the driving process $F$;
\item in Subsection \ref{sub:malcalsdes}, we combine this Taylor formula with Proposition \ref{pro:Nualart} to construct the process $\{S_{t,\varepsilon}^h \, : \, t \in [0,T], \, \varepsilon \in \R, \, h \in \mathfrak{H} \}$ explicitly;
\item the process $\{\Gamma_{h,t}(\varepsilon,\bullet) \, : \, t \in [0,T], \, \varepsilon \in \R, \, h \in \mathfrak{H} \}$ is then defined, for $\varepsilon \in \R$ and $h \in \mathfrak{H}$, by
\[\Gamma_{h,\star}(\varepsilon,\bullet)=\Psi(S_{\star,\varepsilon}^h).\]
\end{enumerate}

\paragraph{Strategy for \textbf{(RAC)}.}
For each $t \in [0,T]$ and $h \in \mathfrak{H}$, we have to construct a random variable $\widetilde{X}^h_t$ satisfying:
\begin{itemize}
\item  $\widetilde{X}^h_t=X_t$ almost surely;
\item for every $\omega \in \Omega$, the map $\varepsilon \mapsto \widetilde{X}^h_t(\omega+\varepsilon j(h))$ is absolutely continuous.
\end{itemize}
Subsection \ref{sub:version} is devoted to the preparation of this task. We proceed in two steps:
\begin{enumerate}
\item in Subsection \ref{sub:taylor}, we construct a version of the isonormal Gaussian process $\{X_g \, : \, g \in \mathfrak{H}\}$ on $(\Omega,\mathcal{F},\mathbb{P},\mathfrak{H})$ that enjoys specific key properties with respect to translations in the direction of $j(h)$;
\item this version is then used to build, for each $h \in \mathfrak{H}$, a version $\widetilde{F}^h$ of the driving process such that, for every $\omega \in \Omega$, the map $(t,\varepsilon) \mapsto \widetilde{F}^h_t(\omega + \varepsilon j(h))$ is continuous and the map $\varepsilon \mapsto \widetilde{F}^h_\star(\omega + \varepsilon j(h))$ is smooth.
\end{enumerate}

\section{Tools in Malliavin calculus} \label{sec:tools}

\subsection{A Taylor formula for Malliavin Calculus}\label{sub:taylor}

Both the \textbf{(RAC)} and \textbf{(SGD)} conditions require to handle, for $t \geq 0$, $\omega \in \Omega$ and $h \in \mathfrak{H}$, the quantity $X_t(\omega+j(h))$ (and hence $F_t(\omega+j(h))$). Our goal in this section is to derive a Taylor expansion for this last expression. As a first step, we build a version of the isonormal process $\{X_g \, : \, g \in \mathfrak{H}\}$ (introduced in Subsection \ref{subsec:abstractwienerspaces}) for which such an expansion is available.

\begin{Prop}\label{prop:casdebase}
For every $g\in \mathfrak{H}$, there exist two events $\Omega_g^\prime \subseteq \Omega_g$ with $\mathbb{P}(\Omega_g^\prime)=\mathbb{P}(\Omega_g)=1$ and a random variable $\widetilde{X}_g$ such that
\begin{enumerate}[(a)]
\item  $\Omega_g + j(\mathfrak{H}) \subseteq \Omega_g$;
\item the equality $X_g=\widetilde{X}_g$ holds on $\Omega_g^\prime$;
\item  for all $\omega \in \Omega_g$ and $h \in \mathfrak{H}$,
\begin{equation}\label{eqn:fundamental}
\widetilde{X}_g(\omega + j(h))=\widetilde{X}_g(\omega)+ \langle g , h \rangle_\mathfrak{H}.
\end{equation}
\end{enumerate}
\end{Prop}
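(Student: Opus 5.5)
Since $j^*(\Omega^*)$ is dense in $\mathfrak{H}$, we fix $g \in \mathfrak{H}$ and choose $y_n \in \Omega^*$ with $\|j^*(y_n) - g\|_{\mathfrak{H}} \leq 2^{-n}$. By construction of the isonormal process, $X_{j^*(y_n)} = y_n$, and $y_n \to X_g$ in $L^2(\Omega)$. The Gaussian isometry gives $\E[|y_{n+1}-y_n|^2] = \|j^*(y_{n+1}) - j^*(y_n)\|^2_{\mathfrak{H}} \leq 4\cdot 4^{-n}$. Hence $\sum_n \E|y_{n+1}-y_n| < \infty$, and $(y_n(\omega))$ converges for $\P$-a.e.\ $\omega$ to a limit equal a.s.\ to $X_g$ (the $L^2$ limit).

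We then define
\[
\Omega_g := \{\omega \in \Omega : \lim_{n} y_n(\omega) \text{ exists in } \R\},
\]
and set $\widetilde{X}_g(\omega) := \lim_n y_n(\omega)$ on $\Omega_g$ and $\widetilde{X}_g := 0$ off $\Omega_g$. Each $y_n$ is continuous, so $\Omega_g$ is Borel and $\widetilde{X}_g$ is measurable. By the previous paragraph, $\P(\Omega_g) = 1$. We let $\Omega_g' \subseteq \Omega_g$ be the full-measure event on which, in addition, $\widetilde{X}_g = X_g$. This event exists because $y_n \to X_g$ in $L^2$ and $y_n \to \widetilde X_g$ a.s.; fixing a representative of $X_g$, the two limits agree a.s. This gives (b).

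The key computation uses linearity of $y_n$ and \eqref{eqn:operateurdual}: for $\omega \in \Omega$ and $h \in \mathfrak{H}$,
\[
y_n(\omega + j(h)) = y_n(\omega) + y_n(j(h)) = y_n(\omega) + \langle j^*(y_n), h\rangle_{\mathfrak{H}},
\]
and $\langle j^*(y_n), h\rangle_{\mathfrak{H}} \to \langle g,h\rangle_{\mathfrak{H}}$ deterministically. Therefore $(y_n(\omega+j(h)))_n$ converges if and only if $(y_n(\omega))_n$ converges. This gives $\Omega_g + j(\mathfrak{H}) = \Omega_g$, which is (a). Passing to the limit gives \eqref{eqn:fundamental} for all $\omega \in \Omega_g$, which is (c).

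The main subtlety is that $\Omega_g$ must be defined as the full convergence set rather than an arbitrary null-set complement; this is exactly what makes it exactly translation invariant under $j(\mathfrak{H})$ instead of only up to null sets. The one care point is that $X_g$ is only an $L^2$ class. This is why (b) is stated on a possibly smaller $\Omega_g'$, while (a) and (c) hold on the deterministic set $\Omega_g$.
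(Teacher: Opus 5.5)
Your proof is correct and follows the paper's argument. You approximate $g$ by $j^*(y_n)$, take $\Omega_g$ to be the convergence set of $(y_n(\omega))$, and pass the exact identity $y_n(\omega+j(h))=y_n(\omega)+\langle j^*(y_n),h\rangle_{\mathfrak{H}}$ to the limit. The only differences from the paper are cosmetic: you choose a sequence with $\|j^*(y_n)-g\|_{\mathfrak{H}}\le 2^{-n}$ to get almost sure convergence directly, where the paper extracts an a.s.-convergent subsequence, and you note the slightly stronger equality $\Omega_g + j(\mathfrak{H}) = \Omega_g$.
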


\begin{proof}
We begin with the case where $g=j^*(y)$ for some $y \in \Omega^*$. Then, using the definition of $X_g$ together with \eqref{eqn:operateurdual}, we have, for every $h \in \mathfrak{H}$ and $\omega \in \Omega$,
\begin{equation}\label{eqn:accroissementisocasdebase}
X_g(\omega + j(h))=y(\omega + j(h))=y(\omega) + y(j(h)) = X_g(\omega)+ \langle g , h \rangle_\mathfrak{H}.
\end{equation}

Now consider a general $g \in \mathfrak{H}$. Choose a sequence $(g_n)_{n \geq 1}$ in $j^*(\Omega^*)$ such that $g_n  \to g$ in $\mathfrak{H}$. Then $(X_{g_n})_{n\geq 1}$ converges to $X_g$ in $L^2(\Omega)$ and, up to extracting a subsequence which we still denote $(X_{g_n})_{n\geq 1}$, the convergence holds almost surely. Define the measurable sets
\begin{align}
	\Omega_{g} &:=\left\{\omega \in \Omega\,:\; \left(X_{g_n}(\omega)\right)_{n\geq 1} \text{ converges in } \R\right\}, \nonumber\\
	\Omega_{g}^{\prime} &:=\left\{\omega \in \Omega\,:\; \lim_{n\to \infty} X_{g_n}(\omega)=X_g(\omega)\right\},
\end{align}
so that $\P(\Omega_{g}^{\prime})=1$ and $\Omega_{g}^{\prime} \subseteq \Omega_{g}$. Next, set
$$
\begin{cases}
	\tilde{X}_g(\omega) &:= \lim_{n\to \infty} X_{g_n}(\omega), \quad \text{ for all } \omega \in \Omega_{g},\\
	\tilde{X}_g(\omega) &:= 0, \quad \text{ for all } \omega \in \Omega\setminus \Omega_{g}.
\end{cases}$$
Since $\tilde{X}_g=X_g$ on $\Omega_g^\prime$, we have $\tilde{X}_g=X_g$ $\P$-a.s. Moreover, \eqref{eqn:accroissementisocasdebase} yields, for every $n \geq 1$, $\omega \in \Omega_{g}$ and $h \in \mathfrak{H}$,
\begin{align*}
	X_{g_n}(\omega + \varepsilon j(h))&=X_{g_n}(\omega) + \langle g_n,h \rangle_{\mathfrak{H}},
\end{align*}
which in turn gives $\Omega_g + j(\mathfrak{H}) \subseteq \Omega_{g}$ and, for every $\omega \in \Omega_{g}$ and $h \in \mathfrak{H}$,
\begin{align*}
	\tilde{X}_g(\omega + j(h))&= \lim_{n\to \infty} X_{g_n}(\omega +  j(h))\\
	&= \lim_{n\to \infty} \big(X_{g_n}(\omega) + \langle g_n,h \rangle_{\mathfrak{H}}\big)\\
	&= \tilde{X}_g(\omega) + \langle g,h \rangle_{\mathfrak{H}}.
\end{align*}
This concludes the proof.
\end{proof}

Since $X_g=\widetilde{X}_g$ almost surely for every $g \in \mathfrak{H}$, the family $\{\widetilde{X}_g \, : \, g \in \mathfrak{H} \}$ is itself an isonormal process on $(\Omega,\mathcal{F},\mathbb{P},\mathfrak{H})$, and one can therefore develop a Malliavin calculus with respect to it. In what follows, we write $\widetilde{D}^q$ for the Malliavin derivative of order $q \in \N$ associated with $\{\widetilde{X}_g \, : \, g \in \mathfrak{H} \}$, $\widetilde{\mathbb{D}}^{m,p}$ for the corresponding Sobolev-type space, and $\widetilde{I}_q$ for the associated $q$th multiple stochastic integral. Within this framework, we can derive a Taylor expansion for stochastic integrals.

\begin{Thm}\label{thm:main}
Let $q \in \N$ and $f \in \mathfrak{H}^{\odot  q}$ be fixed, and set $F=I_q(f)$. There exist two events $\Omega_F^\prime \subseteq \Omega_F$ with $\mathbb{P}(\Omega_F^\prime)=\mathbb{P}(\Omega_F)=1$, together with a random variable $\widetilde{F} \in \widetilde{\mathbb{D}}^{\infty}$, such that
\begin{enumerate}[(a)]
\item  $\Omega_F + j(\mathfrak{H}) \subseteq \Omega_F$;
\item for all $\ell \in \N$, $\widetilde{D}^\ell \widetilde{F}=D^\ell F$ on $\Omega_F^\prime$;
\item for all $\omega \in \Omega_F$, $h \in \mathfrak{H}$ and $0 \leq \ell \leq q$,
\begin{align}\label{eqn:taylorform}
\widetilde{D}^\ell \widetilde{F}(\omega + j(h)) = \sum_{k=0}^{q-\ell} \frac{1}{k!} \langle \widetilde{D}^{k+\ell} \widetilde{F}(\omega),h^{\otimes k} \rangle_{\mathfrak{H}^{\otimes k}}.
\end{align}
\item for all $\ell >q$, $\widetilde{D}^\ell \widetilde{F}=0$.
\end{enumerate}
\end{Thm}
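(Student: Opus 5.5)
The plan is to reduce the statement to Proposition \ref{prop:casdebase} by approximating $f$ with finite combinations of symmetric tensor powers $g^{\otimes q}$, for which $I_q(g^{\otimes q}) = \|g\|^q H_q(X_g/\|g\|)$. Here $H_q$ is the $q$th Hermite polynomial. Since $\{g^{\otimes q}\}$ spans a dense subspace of $\mathfrak{H}^{\odot q}$, choose $f_n=\sum_{i=1}^{N_n} a_{n,i}\, g_{n,i}^{\otimes q}$ with $f_n\to f$ in $\mathfrak{H}^{\otimes q}$. For each such $f_n$, define
\[
\widetilde F_n:=\sum_i a_{n,i}\,\|g_{n,i}\|^q H_q\bigl(\widetilde X_{g_{n,i}}/\|g_{n,i}\|\bigr).
\]
This is a polynomial $P_n$ in finitely many $\widetilde X_{g}$'s, and it coincides with $\widetilde I_q(f_n)$ everywhere by the definition of $\widetilde I_q$.

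By Proposition \ref{prop:casdebase}(c), on $\bigcap_i\Omega_{g_{n,i}}$ we have $\widetilde X_{g}(\omega+j(h))=\widetilde X_g(\omega)+\langle g,h\rangle$. The shifted value $P_n(\widetilde X_{g_{n,i}}(\omega)+\langle g_{n,i},h\rangle)$ is then given exactly by the finite Taylor expansion of the degree-$q$ polynomial $P_n$. The derivatives appearing there are exactly $\widetilde D^k\widetilde F_n(\omega)$ paired with $h^{\otimes k}$, by \eqref{eqn:defmalderi}. Thus \eqref{eqn:taylorform} holds pathwise for $\widetilde F_n$ for all $\omega$ in this event, at every level $\ell$; for $\ell>0$ one applies the same computation to the $\mathfrak{H}^{\otimes\ell}$-valued polynomial $\widetilde D^\ell\widetilde F_n$. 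The identity $\widetilde D^\ell\widetilde F_n=0$ for $\ell>q$ is immediate.

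Next, pass to the limit. By \eqref{eqn:derivativeofintegral}, $\widetilde D^\ell\widetilde F_n=\frac{q!}{(q-\ell)!}\widetilde I_{q-\ell}(f_n)$, and the isometry shows these converge in $L^2(\Omega;\mathfrak{H}^{\otimes\ell})$ for each $0\le \ell\le q$. Extract a single subsequence along which all $q+1$ sequences converge a.s. (a diagonal argument over finitely many $\ell$). Set $\Omega_F$ to be the set of $\omega$ satisfying two conditions: $\omega$ lies in $\bigcap_{n,i}\Omega_{g_{n,i}}$, and $(\widetilde D^\ell \widetilde F_n(\omega))_n$ converges in $\mathfrak{H}^{\otimes\ell}$ for every $\ell\le q$. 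On $\Omega_F$ define $\widetilde F$ and its candidate derivatives $G_\ell$ as these limits, and set them to $0$ elsewhere.

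Invariance $\Omega_F+j(\mathfrak{H})\subseteq\Omega_F$ follows from the pathwise Taylor formula for $\widetilde F_n$. The value $\widetilde D^\ell\widetilde F_n(\omega+j(h))$ is a finite combination, with fixed coefficients $\langle\cdot,h^{\otimes k}\rangle$, of the convergent sequences $\widetilde D^{k+\ell}\widetilde F_n(\omega)$, so it converges too. Combined with the invariance of each $\Omega_{g_{n,i}}$, this gives (a). Letting $n\to\infty$ in the Taylor formula then yields \eqref{eqn:taylorform} with $G_\ell$ in place of $\widetilde D^\ell\widetilde F$.

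The remaining point, which I expect to be the main obstacle, is identifying $G_\ell$ with $\widetilde D^\ell\widetilde F$ and with $D^\ell F$, and checking $\widetilde F\in\widetilde{\mathbb D}^\infty$. This needs care about versions. Since $\widetilde D^\ell$ is closed and $\widetilde F_n\to\widetilde F$ in every $\widetilde{\mathbb D}^{m,p}$ (by hypercontractivity \eqref{eq:hyper}), we get $\widetilde F=\widetilde I_q(f)$ and $\widetilde D^\ell\widetilde F=\frac{q!}{(q-\ell)!}\widetilde I_{q-\ell}(f)=G_\ell$ a.s. We therefore choose the version $\widetilde D^\ell\widetilde F:=G_\ell$, which makes (c) and (d) hold exactly.

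For (b), observe that $X_g=\widetilde X_g$ a.s. Hence the Malliavin calculi built on the two isonormal processes agree up to null sets: $\widetilde I_k(f)=I_k(f)$ a.s. for every $k$, by approximating through polynomials in finitely many $X_g$. Therefore $\widetilde D^\ell\widetilde F=D^\ell F$ a.s. for each $\ell\le q$. Define $\Omega_F'$ as the intersection of $\Omega_F$ with these countably many full-measure sets where equality holds. For $\ell>q$ both sides vanish.
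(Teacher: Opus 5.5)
Your proposal is correct and follows essentially the same route as the paper. You prove the pathwise Taylor identity on a dense class of kernels for which $\widetilde I_q(f_n)$ is an explicit polynomial in finitely many $\widetilde X_g$'s (via Proposition~\ref{prop:casdebase}), then pass to the limit along an a.s.\ convergent subsequence, with $\Omega_F$ a convergence set whose shift-invariance comes from the Taylor identity and $\widetilde D^\ell \widetilde F$ chosen as the pointwise limits. The one difference is a welcome simplification: using the span of the $g^{\otimes q}$ with the multivariate Taylor formula and \eqref{eqn:defmalderi} avoids the paper's factorization \eqref{eqn:factorisationofF} and its Leibniz-rule computation, which tacitly require the distinct $f_i$ to be orthonormal.
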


\begin{proof}
Let us first assume that there exists $f_1\dots,f_q \in \mathfrak{H}$ such that 
\begin{equation}\label{eqn:fsymsimple}
f=f_1 \odot \dots  \odot f_q
\end{equation}
It is well-known, see e.g. \cite[Appendix A]{Ayache2025}, that we have
\begin{equation}\label{eqn:factorisationofF}
F = \prod_{\ell=1}^m H_{q_\ell}(X_{\tilde{f}_\ell})
\end{equation}
where $q_1 +\dots+q_m=q$ and, for any $1 \leq \ell \leq m$, $q_\ell$ stands for the multiplicity of $\tilde{f}_\ell$ in $\{f_1,\dots,f_q\}$ and $H_{q_\ell}$ is the $q_\ell$th Hermite polynomial\footnote{For all $n \in \N$, the $n$th Hermite polynomial is the polynomial of degree $n$ denoted  by $H_n$ and defined, for every $x\in\R$, as $H_n(x)=(-1)^n e^{x^2 /2} \frac{d^n}{dx^n} e^{-x^2 /2}$.}.
Let us then consider the events, of probability $1$, $\Omega_{\tilde{f}_1}^\prime,\Omega_{\tilde{f}_1}, \dots,\Omega_{\tilde{f}_m}^\prime,\Omega_{\tilde{f}_m}$ given by Proposition \ref{prop:casdebase} and set
\[ \Omega_F^\prime = \bigcap_{\ell=1}^m \Omega_{\tilde{f}_\ell}^\prime, \quad \Omega_F = \bigcap_{\ell=1}^m \Omega_{\tilde{f}_\ell} \]
and
\begin{equation}\label{eqn:defoftildeF}
\widetilde{F} := \widetilde{I}_q(f)=\prod_{\ell=1}^m H_{q_\ell}(\widetilde{X}_{\tilde{f}_\ell}). 
\end{equation}
It is clear that $\Omega_F^\prime \subseteq \Omega_F$, $\mathbb{P}(\Omega_F^\prime)=\mathbb{P}(\Omega_F)=1$, $\Omega_F + j(\mathfrak{H}) \subseteq \Omega_F$  and, in view of the expressions \eqref{eqn:factorisationofF} and \eqref{eqn:defoftildeF}, for all $0 \leq k \leq q$, $D^kF=\widetilde{D}^k\widetilde{F}$ on $\Omega_F^\prime$. It remains to show that \eqref{eqn:taylorform} holds true for any $\omega \in \Omega_F $ and $h \in \mathfrak{H}$. For any such $\omega$ and $h$, we know from Proposition \ref{prop:casdebase} and the definition \eqref{eqn:defoftildeF} of $\widetilde{F}$, that
\[\widetilde{F}(\omega+j(h))=\prod_{\ell=1}^m H_{q_\ell}(\widetilde{X}_{\tilde{f}_\ell}(\omega)+\langle \tilde{f}_\ell, h \rangle_\mathfrak{H} ). \]
Next, for any $1 \leq \ell \leq m$, we apply the Taylor formula of the polynomial $H_{q_\ell}$ to get
\begin{align*}
\widetilde{F}(\omega+j(h))& =\prod_{\ell=1}^m \left( \sum_{k_\ell =0}^{q_\ell} \frac{1}{k_\ell !} H_{q_\ell}^{(k_\ell)}(\widetilde{X}_{\tilde{f}_\ell}(\omega)) \left(\langle \tilde{f}_\ell, h \rangle_\mathfrak{H}\right)^{k_\ell} \right) \\
&= \prod_{\ell=1}^m \left( \sum_{k_\ell =0}^{q_\ell} \frac{1}{k_\ell !}\left\langle \left(\widetilde{D}^{k_\ell} H_{q_\ell}(\widetilde{X}_{\tilde{f}_\ell})\right)(\omega) , h^{\otimes {k_\ell}} \right\rangle_{\mathfrak{H}^{\otimes {k_\ell}}} \right) \\
& = \sum_{k=0}^q \frac{1}{k!}\left( \sum_{\substack{ k_1,\dots,k_m\\ k_1+\dots+k_m=k}} k! \prod_{\ell=1}^m \frac{1}{k_\ell !}\left\langle \left(\widetilde{D}^{k_\ell} H_{q_\ell}(\widetilde{X}_{\tilde{f}_\ell})\right)(\omega) , h^{\otimes {k_\ell}} \right\rangle_{\mathfrak{H}^{\otimes {k_\ell}}} \right)
\end{align*}

A straightforward induction on $m$ combined with the Leibniz rule for the Malliavin derivative \cite[Exercise 2.3.10]{nourdin2012normal} then yields, for all $0 \leq k \leq q$, $\omega \in \Omega_F$ and $h \in \mathfrak{H}$,
\begin{align*}
\sum_{\substack{ k_1,\dots,k_m\\ k_1+\dots+k_m=k}} &  k!\prod_{\ell=1}^m \frac{1}{k_\ell !}\left\langle \left(\widetilde{D}^{k_\ell} H_{q_\ell}(\widetilde{X}_{\tilde{f}_\ell})\right)(\omega) , h^{\otimes {k_\ell}} \right\rangle_{\mathfrak{H}^{\otimes {k_\ell}}} \\
& = \left\langle \widetilde{D}^k \left(\prod_{\ell=1}^m H_{q_\ell}(\widetilde{X}_{\tilde{f}_\ell}) \right)(\omega),h^{\otimes k} \right\rangle_{\mathfrak{H}^{\otimes k}}\\
& = \langle \widetilde{D}^k \widetilde{F}(\omega),h^{\otimes k} \rangle_{\mathfrak{H}^{\otimes k}}
\end{align*}

Formula \eqref{eqn:taylorform} for $\ell \geq 1$ is proved in exactly the same way.

Now, we assume $f$ that is a linear combination
\begin{equation}\label{eqn:casesoflincomb}
 f := \sum_{\ell=1}^m \lambda_\ell f_\ell,
\end{equation}
where, for all $1 \leq \ell \leq m$, $\lambda_\ell \in \R$ and $f_\ell \in \mathfrak{H}^{\odot  q}$ has the simple form \eqref{eqn:fsymsimple}. Then, setting, for any such $\ell$, $F_\ell = I_q(f_\ell)$, by linearity of the Malliavin derivatives and the stochastic integral, it suffices to take $\Omega_F^\prime= \bigcap_{\ell=1}^m \Omega_{F_\ell}^\prime$, $\Omega_F= \bigcap_{\ell=1}^m \Omega_{F_\ell}$ and $\widetilde{F}=\widetilde{I}_q(f)$.

Finally, if $F=I_q(f)$ for a general $f \in \mathfrak{H}^{\odot  q}$, there exists a sequence $(f_n)_{n \geq 1} \subseteq \mathfrak{H}^{\odot  q}$ which converges to $f$ in $\mathfrak{H}^{\otimes q}$ such that, for any $n \geq 1$, $f_n$ has form \eqref{eqn:casesoflincomb}. In this case the definition of the Malliavin derivative yields a subsequence, that we still denote $(f_n)_{n \geq 1}$, such that, for any $\ell \in \N$, $\widetilde{D}^\ell \widetilde{I}_q(f_n)  \to \widetilde{D}^\ell \widetilde{I}_q(f)$ and $D^\ell I_q(f_n)  \to D^\ell F$, almost surely. Let us set, for any $n \geq 1$, $F_n := I_q(f_n)$, $\widetilde{F}_n := \widetilde{I}_q(f_n)$,
\begin{align}
	\Omega_{1} &:=\left\{\omega \in \Omega\,:\; \forall 0 \leq \ell \leq q, \, \left(\widetilde{D}^\ell \widetilde{F}_n(\omega)\right)_{n\geq 1} \text{ converges in } \mathfrak{H}^{\otimes \ell}\right\}, \nonumber\\
	\Omega_{1}^{\prime} &:=\left\{\omega \in \Omega\,:\; \forall \ell \in \N, \, \lim_{n\to \infty} \widetilde{D}^\ell \widetilde{F}_n(\omega)=\widetilde{D}^\ell \widetilde{I}_q(f)(\omega) \text { and }\right.\\
	&\left.\hskip5cm \lim_{n\to \infty} D^\ell F_n(\omega)=\widetilde{D}^\ell F(\omega) \right\}, \nonumber
\end{align}
\[ \Omega_F^\prime = \Omega_1^\prime \cap \bigcap_{n \geq 1} \Omega_{F_n}^\prime, \quad  \Omega_F= \Omega_1 \cap \bigcap_{n \geq 1} \Omega_{F_n} \]
and, for $0 \leq \ell \leq q$ the random variables
$$
\begin{cases}
	\widetilde{F}_\ell(\omega) &:= \lim_{n\to \infty} \widetilde{D}^\ell\widetilde{F}_n(\omega), \quad \text{ for all } \omega \in \Omega_F,\\
	\widetilde{F}_\ell(\omega) &:= 0, \quad \text{ for all } \omega \in \Omega\setminus \Omega_F.
\end{cases}$$
Clearly $\Omega_F^\prime \subseteq \Omega_F$ and $\mathbb{P}(\Omega_F^\prime)=\mathbb{P}(\Omega_F)=1$. We now show that one can take $\widetilde{F}=\widetilde{F}_0$. Since $\mathbb{P}(\Omega_1^\prime)=1$, we obtain $\widetilde{F} \in \widetilde{\mathbb{D}}^\infty$, with $\widetilde{D}^\ell\widetilde{F}=\widetilde{F}_\ell$ for $0 \leq \ell \leq q$ and $\widetilde{D}^\ell\widetilde{F}=0$ for $\ell >q$. Moreover, for every $\omega \in \Omega_F^\prime$ and every $\ell \in \N$,
\[ \widetilde{D}^\ell\widetilde{F}(\omega)=\lim_{n \to + \infty}\widetilde{D}^\ell \widetilde{I}_q(f_n)(\omega)= \lim_{n \to + \infty}D^\ell F_n(\omega) = D^\ell F(\omega).  \]

To conclude, we note that the previous case of elements of the form \eqref{eqn:casesoflincomb} gives, for every $\omega \in \Omega_F$, $h \in \mathfrak{H}$, $n \geq 1$ and $0 \leq \ell \leq q$,
\[ \widetilde{D}^\ell \widetilde{I}_q(f_n)(\omega + j(h)) = \sum_{k=0}^{q-\ell} \frac{1}{k!} \langle \widetilde{D}^{k+\ell}\widetilde{I}_q(f_n)(\omega),h^{\otimes k} \rangle_{\mathfrak{H}^{\otimes k}}.\]
 It follows that $\omega + j(h) \in \Omega_F$, and hence $\Omega_F + j(\mathfrak{H}) \subseteq \Omega_F$. For such $\omega$ and $h$, we also get, for all $0 \leq \ell \leq q$,
  \begin{align*}
  \widetilde{D}^\ell\widetilde{F}(\omega + j(h))=\widetilde{F}_\ell(\omega + j(h)) &= \sum_{k=0}^{q-\ell} \frac{1}{k!} \langle \widetilde{F}_{\ell+k}(\omega),h^{\otimes k} \rangle_{\mathfrak{H}^{\otimes k}}\\ &=\sum_{k=0}^{q-\ell} \frac{1}{k!} \langle \widetilde{D}^{\ell+k}\widetilde{F}(\omega),h^{\otimes k} \rangle_{\mathfrak{H}^{\otimes k}}.
  \end{align*}
\end{proof}

For a random variable $F=I_q(f)$ with $f \in \mathfrak{H}^{\odot  q}$, the events $\Omega_F$ and $\Omega_F'$ appearing in Theorem \ref{thm:main} have the advantage of being independent of $h \in \mathfrak{H}$; however, the associated Taylor expansion \eqref{eqn:taylorform} holds only for a random variable $\widetilde{F}$ almost surely equal to $F$. If instead we fix the vector $h \in \mathfrak{H}$ beforehand (which is enough to verify the \textbf{(SGD)} condition) then the Taylor expansion can be written directly for $F$.

\begin{Cor}\label{cor:taylor}
Let $q \in \N$ and $f \in \mathfrak{H}^{\odot  q}$ be fixed, and set $F=I_q(f)$. For every $h \in \mathfrak{H}$, there exists an event $\Omega_h$ with $\mathbb{P}(\Omega_h)=1$ such that, for every $\omega \in \Omega_h$ and $0 \leq \ell \leq q$,
\[D^\ell F(\omega + j(h)) = \sum_{k=0}^{q-\ell} \frac{1}{k!} \langle D^{k+\ell} F(\omega),h^{\otimes k} \rangle_{\mathfrak{H}^{\otimes k}}. \]
\end{Cor}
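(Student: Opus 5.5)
We derive the corollary from Theorem \ref{thm:main} via the Cameron--Martin theorem. Fix $h \in \mathfrak{H}$. Theorem \ref{thm:main} gives $\Omega_F^\prime \subseteq \Omega_F$ of full probability and $\widetilde{F}$ such that $\widetilde{D}^\ell \widetilde{F} = D^\ell F$ on $\Omega_F^\prime$ for all $\ell$. It also gives the Taylor identity \eqref{eqn:taylorform} at every $\omega \in \Omega_F$, with $\Omega_F + j(\mathfrak{H}) \subseteq \Omega_F$. To replace $\widetilde{D}^\ell\widetilde{F}$ by $D^\ell F$ on both sides, we need $\omega \in \Omega_F^\prime$ and also $\omega + j(h) \in \Omega_F^\prime$. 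We therefore set
\[
\Omega_h := \Omega_F^\prime \cap \big(\Omega_F^\prime - j(h)\big),
\]
where $\Omega_F^\prime - j(h) = \{\omega \in \Omega : \omega + j(h) \in \Omega_F^\prime\}$.

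The only substantive step is to show that $\mathbb{P}(\Omega_F^\prime - j(h)) = 1$. By the Cameron--Martin theorem, the law of $\omega \mapsto \omega + j(h)$ under $\mathbb{P}$ is equivalent to $\mathbb{P}$, with the explicit Radon--Nikodym density $\exp(X_h - \tfrac12\|h\|_{\mathfrak{H}}^2)$. Hence $\mathbb{P}(\{\omega : \omega + j(h) \in N\}) = 0$ for every null set $N$. Applying this with $N = \Omega \setminus \Omega_F^\prime$ gives the claim. Measurability is not an issue, since $\mathcal{F}$ is the $\mathbb{P}$-completion and translation by $j(h)$ maps null sets to null sets. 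It follows that $\mathbb{P}(\Omega_h) = 1$.

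For $\omega \in \Omega_h$ and $0 \leq \ell \leq q$, we have $\omega \in \Omega_F^\prime \subseteq \Omega_F$. Thus \eqref{eqn:taylorform} applies and gives
\[
\widetilde{D}^\ell \widetilde{F}(\omega + j(h)) = \sum_{k=0}^{q-\ell} \frac{1}{k!} \big\langle \widetilde{D}^{k+\ell} \widetilde{F}(\omega), h^{\otimes k} \big\rangle_{\mathfrak{H}^{\otimes k}}.
\]
On the left, $\omega + j(h) \in \Omega_F^\prime$, so $\widetilde{D}^\ell \widetilde{F}(\omega + j(h)) = D^\ell F(\omega + j(h))$. On the right, $\omega \in \Omega_F^\prime$, so $\widetilde{D}^{k+\ell}\widetilde{F}(\omega) = D^{k+\ell} F(\omega)$ for each $k$. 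Substituting both identities yields the stated formula. The main (mild) obstacle is the quasi-invariance of $\mathbb{P}$ under Cameron--Martin shifts, a standard fact that we invoke rather than reprove. We also note that fixed representatives of $D^\ell F$ must be chosen for the statement to make sense pointwise; the identities above hold for the representatives used in Theorem \ref{thm:main}(b).
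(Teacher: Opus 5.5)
Your proof is correct and follows the paper's argument essentially verbatim. The paper also uses the Cameron--Martin quasi-invariance of $\mathbb{P}$ under $\omega\mapsto\omega+j(h)$ to show that $\Omega_F^\prime-j(h)$ has full measure. It then intersects with $\Omega_F^\prime$ (and with $\Omega_F$, which is redundant since $\Omega_F^\prime\subseteq\Omega_F$) and substitutes $\widetilde{D}^\ell\widetilde{F}=D^\ell F$ on both sides of \eqref{eqn:taylorform}.
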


\begin{proof}
Let $\Omega_F^\prime$ and $\Omega_F$ be the events given by Theorem \ref{thm:main}. Fix $h \in \mathfrak{H}$ and consider the translation $T_h \, : \, \Omega \to \Omega \, : \, \omega \mapsto \omega + j(h)$, together with the induced pushforward measure
\[ \mathbb{P}_h = (T_h)_\ast \mathbb{P}. \]
The Cameron-Martin theorem \cite{Cameron1944} ensures that $\mathbb{P}_h$ is equivalent to $\mathbb{P}$, with Radon-Nikodym derivative
\begin{equation}\label{eq:CMLiklhood}
 \frac{ d \mathbb{P}_h}{d \mathbb{P}}= \exp \left( X_h - \frac{1}{2} \|h \|_{\mathfrak{H}}^2 \right).
 \end{equation}
 Consequently, the event
 \[ \{ \omega \, : \, \omega + j(h) \in \Omega_F^\prime \}=\Omega_F^\prime-j(h) \]
 has probability $1$. The set $\Omega_h:=(\Omega_F^\prime-j(h)) \cap \Omega_F^\prime \cap \Omega_F$ is therefore of probability $1$, and the conclusion follows from \eqref{eqn:taylorform} combined with the identity $\widetilde{D}^\ell \widetilde{F}=D^\ell F$, valid on $\Omega_F^\prime$ for every $\ell \in \N$.
\end{proof}

\subsection{Towards (SGD): construction of the processes $S$ and $\Gamma$}\label{sub:malcalsdes}

Combining the results of Subsection \ref{sub:taylor} with Proposition \ref{pro:Nualart}, we are now in a position to construct the processes $\{S_{t,\varepsilon}^h \, : \, t \in [0,T], \, \varepsilon \in \R, \, h \in \mathfrak{H} \}$ and $\{\Gamma_{t,\varepsilon}^h \, : \, t \in [0,T], \, \varepsilon \in \R, \, h \in \mathfrak{H} \}$ announced at the end of Subsection \ref{sub:malialter}. In what follows, given $t \in \R_+$, $h \in \mathfrak{H}$ and $1 \leq k \leq q$, the notation $\left<D^kF_t,h^{\otimes k}\right>_{\HHH^{\otimes k}}$ stands for
 \begin{align*}
	& \left(\left<D^kF^1_t),h^{\otimes k}\right>_{\HHH^{\otimes k}}, \ldots, \left<D^kF^d_t,h^{\otimes k}\right>_{\HHH^{\otimes k}}\right),
	\end{align*}
	while we set $\left<D^0F_t,h^{\otimes 0}\right>_{\HHH^{\otimes 0}}:= F_t$.
	
\begin{Prop}\label{prop:Version}
	Let $F=\{F_t=(I_q(f_t^1),\ldots,I_q(f_t^m))\,,\;t\in \R_+\}$ be an $\R^m$-valued chaos process on an abstract Wiener space $(\Omega,\mathcal{F},\P,\mathfrak{H})$, and assume that $F$ satisfies hypothesis $\mathbf{(H_2)}$ with $H\in(\tfrac{1}{2},1)$.
	Let $R=\{R^h_{t,\varepsilon}\,;\;t\in\R,\, \varepsilon\in\R,\, h\in\HHH\}$ and $S=\{S^{h}_{t,\varepsilon}\,;\;t\in\R,\, \varepsilon\in\R,\, h\in\HHH\}$ be the $\R^m$-valued stochastic processes defined, respectively, by
 \begin{align}\label{eq:defRS}
	R_{t,\varepsilon}^h=F_t(\bullet + \varepsilon\,j(h)) \quad \text{ and } \quad S^{h}_{t,\varepsilon} = \sum_{k=0}^{q}  \frac{\varepsilon^k}{k!}\langle D^k F_t,h^{\otimes k} \rangle_{\mathfrak{H}^{\otimes k}}.
 \end{align}
Then the following hold:
\begin{enumerate}[(a)]
	\item $S$ is a version of $R$;
	\item for every $h\in \HHH$, almost every $\omega\in\Omega$ and every $\varepsilon\in\R$, one has $S^{h}_{\star,\varepsilon}(\omega)\in W_{2}^{1-\alpha}\left(0, T ; \mathbb{R}^{m}\right)$;
	\item for every $\beta\in(0,H-\tfrac{1}{2})$ and $1-\alpha = H-\beta$, and for almost every $\omega\in\Omega$, the map from $\R$ into $W_{2}^{1-\alpha}\left(0, T ; \mathbb{R}^{m}\right)$ defined by $\varepsilon\mapsto S^{h}_{\star,\varepsilon}(\omega)$ is of class $\mathcal{C}^{\infty}$. Moreover, for every $h\in \HHH$, almost every $\omega\in\Omega$, every $\varepsilon\in\R$ and every $\ell \in \N$,
	$$
\frac{d^\ell}{d\varepsilon^\ell} S^{h}_{\star,\varepsilon}(\omega)= \left\{
\begin{array}{lcl}
\sum_{k=l}^{q}\frac{\varepsilon^{k-l}}{k!} \left<D^kF_\star(\omega) ,h^{\otimes k}\right>_{\HHH^{\otimes k}}, & \text{if }& 0 \leq \ell \leq q\\
0,&  \text{if }& \ell >q.
\end{array}\right.
$$
\end{enumerate}	 
\end{Prop}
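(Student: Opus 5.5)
Part (a) is a direct consequence of Corollary \ref{cor:taylor} applied with the direction $\varepsilon h$. Fix $t\in\R$, $\varepsilon\in\R$, $h\in\HHH$ and $1\le \ell'\le m$, and apply the corollary to $F=I_q(f^{\ell'}_t)$ with $\ell=0$ and direction $\varepsilon h$. This gives an event of probability one on which
\[
F_t^{\ell'}(\omega+\varepsilon j(h))=\sum_{k=0}^q \frac{1}{k!}\langle D^kF^{\ell'}_t(\omega),(\varepsilon h)^{\otimes k}\rangle_{\HHH^{\otimes k}}=\sum_{k=0}^q \frac{\varepsilon^k}{k!}\langle D^kF^{\ell'}_t(\omega),h^{\otimes k}\rangle_{\HHH^{\otimes k}}.
\]
Intersecting over the finitely many coordinates $\ell'$ yields $R^h_{t,\varepsilon}=S^h_{t,\varepsilon}$ almost surely, for each fixed $(t,\varepsilon,h)$, which is exactly the version property. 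The only point to check is that $D^kF_t$ is the Hölder continuous version fixed after $\mathbf{(H_2)}$. This is harmless, since for each fixed $t$ it coincides almost surely with any other version.

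For (b) and (c), fix $h$ and work on the event of full probability on which, for every $0\le k\le q$, the trajectory $t\mapsto D^kF_t(\omega)\in\HHH^{\otimes k}$ is $\theta$-Hölder for every $\theta<H$. This event comes from the Kolmogorov versions constructed from \eqref{eq:HolderF0}; to make it independent of $\theta$, take a countable sequence $\theta_n\uparrow H$. On this event, Cauchy--Schwarz gives
\[
|\langle D^kF_t-D^kF_s,h^{\otimes k}\rangle|\le \|h\|^k_{\HHH}\,\|D^kF_t-D^kF_s\|_{\HHH^{\otimes k}},
\]
so each coefficient function $g_k:=\langle D^kF_\star(\omega),h^{\otimes k}\rangle$ lies in $C^{H-\beta+\gamma}(0,T;\R^m)$ for $\gamma\in(0,\beta)$. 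The embedding \eqref{eq:injection2} then places each $g_k$ in $W_2^{1-\alpha}(0,T;\R^m)$ with $1-\alpha=H-\beta$. Since $W_2^{1-\alpha}$ is a vector space and $S^h_{\star,\varepsilon}=\sum_{k}\frac{\varepsilon^k}{k!}g_k$ is a finite linear combination, (b) follows for every $\varepsilon$ simultaneously. The null set does not depend on $\varepsilon$, which is why the quantifier ``every $\varepsilon$'' can sit after ``almost every $\omega$''.

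For (c), the map $\varepsilon\mapsto S^h_{\star,\varepsilon}(\omega)$ is a polynomial of degree at most $q$ in $\varepsilon$ with coefficients $g_k/k!$ in the normed space $W_2^{1-\alpha}(0,T;\R^m)$. Such a map is $\mathcal{C}^\infty$, and its derivatives can be computed termwise. To justify this, note that for a fixed vector $v$ the difference quotient of $\varepsilon\mapsto \varepsilon^k v$ equals $\frac{(\varepsilon+\eta)^k-\varepsilon^k}{\eta}v$, which converges in norm to $k\varepsilon^{k-1}v$ because only the scalar factor varies. Iterating, the $\ell$th derivative is $\sum_{k\ge \ell}\frac{k!}{(k-\ell)!}\varepsilon^{k-\ell}\frac{g_k}{k!}=\sum_{k=\ell}^q\frac{\varepsilon^{k-\ell}}{(k-\ell)!}g_k$, and it vanishes for $\ell>q$. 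The displayed formula in the statement writes $k!$ in the denominator, whereas the computation gives $(k-\ell)!$. I would prove the formula with $(k-\ell)!$ and flag the statement's denominator as a likely typo; the two agree when $\ell=0$. The main obstacle is not analytic. It is the bookkeeping of null sets: the exceptional set must be chosen independently of $\varepsilon$, and of $t$ in (b) and (c). The Kolmogorov versions handle this uniformly, while (a) is only claimed pointwise in $(t,\varepsilon)$, where Corollary \ref{cor:taylor} suffices.
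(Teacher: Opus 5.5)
Your proposal is correct and follows the paper's route. Part (a) comes from Corollary \ref{cor:taylor} applied in the direction $\varepsilon h$, part (b) from the H\"older versions of $D^kF$ obtained after \eqref{eq:HolderF0} together with the embedding \eqref{eq:injection2}, and part (c) from differentiating a polynomial in $\varepsilon$ with coefficients in $W_2^{1-\alpha}(0,T;\R^m)$; your null-set bookkeeping makes explicit what the paper leaves implicit.

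You are also right that the denominator in the displayed derivative formula should be $(k-\ell)!$ rather than $k!$. The two coincide for $\ell=0$, and at $\varepsilon=0$ for $\ell=1$, which is the only case evaluated later in \eqref{eq:DerivativeOfXt}.
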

\begin{proof}
Point (a) is a direct consequence of Corollary \ref{cor:taylor}. Point (b) follows from hypothesis $\mathbf{(H_2)}$ (see the discussion following inequality \eqref{eq:HolderF0} above) together with the embedding \eqref{eq:injection2}. Point (c) is a straightforward consequence of elementary differential calculus.
\end{proof}

Building on this proposition, we define, for each $h \in \mathfrak{H}$,
\[ \Phi_h \, : \, \R\times \Omega \to W_{2}^{1-\alpha}\left(0, T ; \mathbb{R}^{m}\right) \, : \, (\epsilon,\omega) \mapsto S^{h}_{\star,\varepsilon}(\omega).\]
With these notations in hand, we set, for all $h \in \mathfrak{H}$, $t \in [0,T]$, $\varepsilon \in \R$ and $\omega \in \Omega$,
\begin{equation}\label{defofGamma}
\Gamma_{h,t}(\varepsilon,\omega) := \Psi_t(\Phi_h(\varepsilon,\omega)),
\end{equation}
where $\Psi$ is the mapping introduced in \eqref{eqn:frechetdifferentiabilityPSI}. Recall also that $\mathcal{D}\Psi(\varphi)[\psi]$ denotes the Fréchet derivative of $\Psi$ at $\varphi$ in the direction $\psi$.

We now verify that the process $\{\Gamma_{h,t}(\varepsilon,\bullet) \, : \, t \in [0,T], \, \varepsilon \in \R, \, h \in \mathfrak{H} \}$ enjoys the required regularity properties.

\begin{Prop}\label{lem:Gamma}
Let $F=\{F_t=(I_p(f_t^1),\ldots,I_p(f_t^m))\,,\;t\in \R_+\}$ be an $\R^m$-valued chaos process on an abstract Wiener space $(\Omega,\mathcal{F},\P,\mathfrak{H})$, satisfying hypothesis $\mathbf{(H_2)}$, and assume that the coefficients $(b_k)_{1 \leq k \leq d}$ and $(\sigma_{k,\ell})_{1 \leq k \leq d,\, 1 \leq \ell \leq m}$ of the stochastic differential equation \eqref{eq:SDEsDrivenF} satisfy hypothesis $\mathbf{(H_1)}$.
Then, for every $h\in\HHH$, almost every $\omega\in\Omega$, every $t\in [0,T]$ and every $1 \leq k \leq d$, the map $\Gamma_{h,t}^{k}(\star,\omega)$ is differentiable on $\R$, with derivative given, for every $\varepsilon \in \R$, by
	\begin{equation}
		\frac{d}{d\varepsilon}\Gamma_{h,t}^{k} (\varepsilon,\omega)= \mathcal{D} \Psi_t^k(S_{\star, \varepsilon}^{h}(\omega))\left[\frac{d}{d \varepsilon} S_{\star, \varepsilon}^{h}(\omega)\right].
	\end{equation}
In particular, for $\varepsilon=0$, we get
	\begin{equation}\label{eq:DerivativeOfXt}
		\left.\frac{d}{d\varepsilon}\Gamma_{h,t}^{k} (\varepsilon,\omega)\right|_{\varepsilon=0}= \mathcal{D} \Psi_t^k(F_\star(\omega))\left[\langle D F_\star(\omega), h\rangle_{\HHH}\right].
	\end{equation}
\end{Prop}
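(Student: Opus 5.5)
The plan is to obtain the statement as a chain rule for the composition $\Gamma_{h,t}(\star,\omega)=\Psi_t\circ\Phi_h(\star,\omega)$. The outer map is the Fréchet differentiable $\Psi$ of Proposition \ref{pro:Nualart}, followed by the evaluation map at time $t$. The inner map is $\varepsilon\mapsto S^h_{\star,\varepsilon}(\omega)$, which is $\mathcal{C}^\infty$ by Proposition \ref{prop:Version}.

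Fix $h\in\HHH$, choose $\beta\in(0,H-\tfrac12)$, and set $1-\alpha=H-\beta$. Let $\Omega^h$ be the full-measure event on which points (b) and (c) of Proposition \ref{prop:Version} hold. On this event every sample path $\langle D^kF_\star(\omega),h^{\otimes k}\rangle_{\HHH^{\otimes k}}$, $0\le k\le q$, lies in $C^{H-\beta+\gamma}(0,T;\R^m)\subseteq W_2^{1-\alpha}(0,T;\R^m)$, using the Hölder versions fixed after \eqref{eq:HolderF0} together with the bound $|\langle D^kF_t-D^kF_s,h^{\otimes k}\rangle|\le \|D^k(F_t-F_s)\|\,\|h\|^k$.

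Fix $\omega\in\Omega^h$. Then $\varepsilon\mapsto S^h_{\star,\varepsilon}(\omega)$ is a polynomial in $\varepsilon$ with finitely many coefficients in the Banach space $W_2^{1-\alpha}$. It is therefore differentiable as a $W_2^{1-\alpha}$-valued map, and its derivative is $\sum_{k=1}^q\frac{\varepsilon^{k-1}}{(k-1)!}\langle D^kF_\star(\omega),h^{\otimes k}\rangle$. This is elementary: the difference quotient minus this derivative is a finite sum of terms $O(\varepsilon')$ times fixed elements of $W_2^{1-\alpha}$.

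Next, by Proposition \ref{pro:Nualart} the map $\Psi:W_2^{1-\alpha}\to W_1^\alpha$ is Fréchet differentiable. The evaluation map $x\mapsto x(t)^k$ is a bounded linear functional on $W_1^\alpha$, since $|x(t)|\le\|x\|_{\alpha,1}$. The Banach-space chain rule then shows that $\varepsilon\mapsto \Psi^k_t(S^h_{\star,\varepsilon}(\omega))$ is differentiable with derivative $\mathcal{D}\Psi^k_t(S^h_{\star,\varepsilon}(\omega))[\tfrac{d}{d\varepsilon}S^h_{\star,\varepsilon}(\omega)]$. Here $\mathcal{D}\Psi_t^k(\varphi)[\psi]$ means the $k$th component at time $t$ of $\mathcal{D}\Psi(\varphi)[\psi]$. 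This proves the first formula.

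For \eqref{eq:DerivativeOfXt}, evaluate at $\varepsilon=0$. Since $D^0F_t=F_t$, we have $S^h_{\star,0}(\omega)=F_\star(\omega)$, and the derivative at $0$ reduces to its $k=1$ term, $\langle DF_\star(\omega),h\rangle_\HHH$.

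The only delicate point is a measure-theoretic one. The null set must be chosen independently of $\varepsilon$ and $t$, so that the statement holds for all $\varepsilon\in\R$ and $t\in[0,T]$ simultaneously. This works because $S$ is an explicit polynomial in $\varepsilon$ whose coefficients are the fixed continuous versions of $D^kF$. Consequently only the finitely many coefficient paths need to lie in $W_2^{1-\alpha}$, which holds on the single event $\Omega^h$.
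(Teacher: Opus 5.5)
Your proposal is correct and follows the paper's proof. Both derive the result from the chain rule for $\Psi_t^k\circ\Phi_h(\star,\omega)$, combining the smoothness of $\varepsilon\mapsto S^h_{\star,\varepsilon}(\omega)$ in $W_2^{1-\alpha}$ (Proposition~\ref{prop:Version}(c)) with the Fr\'echet differentiability of $\Psi$ (Proposition~\ref{pro:Nualart}); yours additionally spells out that evaluation at $t$ is bounded on $W_1^{\alpha}$, that the exceptional null set can be chosen independently of $\varepsilon$ and $t$, and the correct coefficient $\varepsilon^{k-1}/(k-1)!$ for $\frac{d}{d\varepsilon}S^h_{\star,\varepsilon}$ (the printed $1/k!$ in Proposition~\ref{prop:Version}(c) is a slip that is harmless at $\varepsilon=0$).
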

\begin{proof}
Since $F$ satisfies hypothesis $\mathbf{(H_2)}$, point (c) of Proposition \ref{prop:Version} ensures that, for each $h\in\HHH$ and almost every $\omega\in\Omega$, the mapping $\Phi_{h}(\star,\omega)$ is of class $\mathcal{C}^\infty$. On the other hand, Proposition \ref{pro:Nualart} guarantees that $\Psi_t^k$ is Fréchet differentiable for every $t\in [0,T]$ and every $1 \leq k \leq d$. The chain rule then yields, for all $h\in\HHH$, almost every $\omega\in\Omega$, every $t\in [0,T]$ and every $1 \leq k \leq d$, the differentiability of $\Gamma^k_{h,t}(\star,\omega)$. Moreover, under the same assumptions and for any $\varepsilon\in\R$, the chain rule gives
\begin{equation}
\begin{aligned}
 \frac{d}{d \varepsilon} \Gamma_{h,t}^{k} (\varepsilon,\omega)  
& =\mathcal{D} (\Psi_t^k\circ\Phi_h(\star,\omega))[\varepsilon] \\
& =\mathcal{D} \Psi_t^k\left(\Phi_h(\varepsilon,\omega)\right)[\mathcal{D} \Phi_h(\star,\omega)[\varepsilon]]\\
& =\mathcal{D}  \Psi_t^k\left(S_{\star, \varepsilon}^{h}(\omega)\right)\left[\frac{d}{d \varepsilon} S_{\star, \varepsilon}^{h}(\omega)\right].
\end{aligned}
\end{equation}
This concludes the proof of the first part of Proposition \ref{lem:Gamma}. Equation \eqref{eq:DerivativeOfXt} is then a direct consequence of \eqref{defofGamma} and point (c) of Proposition \ref{prop:Version}.
\end{proof}

\subsection{Towards (RAC): a refined version of the driving chaotic process}\label{sub:version}

In this subsection, we construct a version of the driving chaotic process that enjoys strong continuity and smoothness properties. More precisely, we establish the following theorem.

\begin{Thm}\label{thm:ray}
	Let $F$ be a real-valued chaos process on an abstract Wiener space $(\Omega,\mathcal{F},\P,\mathfrak{H})$.  
	Assume that $F$ verifies the hypothesis $\mathbf{(H_2)}$, with $H\in (\frac{1}{2},1)$. 
	For any \(h\in\HHH\), there exists a stochastic process $(\tilde F^{\,h}_t)_{t\in[0,T]}$ such that:
	\begin{enumerate}[(a)]
	\item For any $t\in [0,T]$,  $\tilde{F}^{\,h}_t=F_t$ $\P$-a.s.;
	\item For all $\omega\in\Omega$, the map
	$
	\R_+\times\R \ni (t,\varepsilon)\longmapsto \widetilde F^{\,h}_t\big(\omega+\varepsilon\,j(h)\big)
	$
	is continuous.
	\item Let $\beta\in(0,H-\frac{1}{2})$ and $1-\alpha = H-\beta$. For all $\omega\in\Omega$, the mapping from $\R$ into $W_{2}^{1-\alpha}\left(0, T ; \mathbb{R}^{m}\right)$ defined by $\varepsilon\mapsto \tilde{F}^{h}_{\star}(\omega + \varepsilon j(h))$ is of class $\mathcal{C}^{\infty}$ on $\R$.
	\end{enumerate}
\end{Thm}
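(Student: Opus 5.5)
Fix $h\in\HHH$. The idea is to combine the pointwise Taylor expansion of Theorem \ref{thm:main} with a continuous-in-$t$ modification that is chosen once and for all on a set invariant under the translations $\varepsilon j(h)$. The natural candidate is
\[
\widetilde F^{\,h}_t(\omega):=\sum_{k=0}^{q}\frac{1}{k!}\,\big\langle G^k_t(\omega),h^{\otimes k}\big\rangle_{\HHH^{\otimes k}},
\]
where $G^k_t$ is a version of $\widetilde D^k\widetilde F_t$. The key observation is that if such a formula holds at $\omega$ and also at $\omega+\varepsilon j(h)$, then the Taylor formula \eqref{eqn:taylorform} turns the map $\varepsilon\mapsto \widetilde F^h_t(\omega+\varepsilon j(h))$ into a polynomial in $\varepsilon$ of degree at most $q$. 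Its coefficients are $\langle \widetilde D^k\widetilde F_t(\omega),h^{\otimes k}\rangle/k!$. This is exactly the structure of $S^h_{t,\varepsilon}$ in Proposition \ref{prop:Version}, but now it holds for \emph{every} $\omega$.

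\emph{Step 1 (countable skeleton).} For each dyadic $t\in\mathbb{Q}_T$ (dyadics in $[0,T]$, or in $\R_+$), apply Theorem \ref{thm:main} to $F_t=I_q(f_t)$. This gives $\widetilde F_t$ and events $\Omega'_{F_t}\subseteq\Omega_{F_t}$, each invariant under $j(\HHH)$. Set $\Omega_0=\bigcap_{t\in\mathbb{Q}_T}\Omega_{F_t}$ and $\Omega_0'=\bigcap_t\Omega'_{F_t}$. Both are countable intersections, so they have full measure, and $\Omega_0+j(\HHH)\subseteq\Omega_0$.

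\emph{Step 2 (Hölder continuity on the skeleton).} For $0\le k\le q$, by (b) of Theorem \ref{thm:main} and \eqref{eq:HolderF0}, the $\HHH^{\otimes k}$-valued process $t\mapsto\widetilde D^k\widetilde F_t$ on dyadics satisfies the Kolmogorov moment bound. The quantitative Kolmogorov lemma then gives a full-measure set $A$ on which, for every $k$, the restriction to $\mathbb{Q}_T$ is $\theta$-Hölder for some $\theta\in(H-\beta+\gamma,H)$. This set must be made translation invariant, which is the main obstacle: the Kolmogorov set need not be invariant under $j(h)$. I would handle it using the Taylor formula itself. For $\omega\in\Omega_0$ and $t\in\mathbb{Q}_T$, \eqref{eqn:taylorform} expresses $\widetilde D^k\widetilde F_t(\omega+j(g))$ as a finite combination of the $\widetilde D^{k+r}\widetilde F_t(\omega)$ contracted with $g^{\otimes r}$. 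Hence Hölder continuity of all orders $k\le q$ at $\omega$ transfers to $\omega+j(g)$, with constant bounded by $\sum_r\|g\|^r/r!$ times the constant at $\omega$. So $\Omega_1:=(A\cap\Omega_0)+j(\HHH)$ works, after checking that $\Omega_1\subseteq\Omega_0$, that it is invariant, and that it is measurable. For measurability, I would instead take $\Omega_1:=\{\omega\in\Omega_0:$ all $\widetilde D^k\widetilde F_\cdot(\omega)$, $k\le q$, are uniformly Hölder on $\mathbb{Q}_T\}$. This is a measurable set, it contains $A\cap\Omega_0$, and by the transfer argument it is invariant under $j(\HHH)$.

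\emph{Step 3 (definition and verification).} On $\Omega_1$, let $G^k_t(\omega)$ be the continuous extension in $t$ of $\widetilde D^k\widetilde F_t(\omega)$ from $\mathbb{Q}_T$. Define $\widetilde F^h_t(\omega)=G^0_t(\omega)$ on $\Omega_1$ and $\widetilde F^h_t(\omega)=0$ on $\Omega\setminus\Omega_1$. Passing to the limit in \eqref{eqn:taylorform} along dyadics gives, for $\omega\in\Omega_1$ and all $t$,
\[
G^0_t(\omega+\varepsilon j(h))=\sum_{k=0}^q\frac{\varepsilon^k}{k!}\langle G^k_t(\omega),h^{\otimes k}\rangle.
\]
For $\omega\notin\Omega_1$, invariance gives $\omega+\varepsilon j(h)\notin\Omega_1$, so the map is identically zero there. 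This settles (b) and (c): the coefficients $t\mapsto\langle G^k_t(\omega),h^{\otimes k}\rangle$ are $\theta$-Hölder, hence lie in $W_2^{1-\alpha}$ by \eqref{eq:injection2}. Therefore $\varepsilon\mapsto\widetilde F^h_\star(\omega+\varepsilon j(h))$ is a polynomial with coefficients in this Banach space, hence $\mathcal C^\infty$, and it is jointly continuous in $(t,\varepsilon)$. For (a), at dyadic $t$ we have $G^0_t=\widetilde F_t=F_t$ almost surely. For general $t$, take dyadics $t_n\to t$: $F_{t_n}\to F_t$ in $L^2$ by the isometry and $\mathbf{(H_2)}$, while $G^0_{t_n}\to G^0_t$ pointwise, so $G^0_t=F_t$ almost surely.
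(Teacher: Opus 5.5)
Your proof is correct, and it takes a genuinely different route from the paper's.

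\textbf{The paper's route.} The paper fixes $h$, sets $e_0=h/\|h\|_{\HHH}$ and $V=e_0^{\perp}$, and uses Lemma \ref{indepnt} to write $F_t=\sum_{k}I_k(e_0^{\otimes k})\,I_{q-k}(f_{t,q-k})$ with $f_{t,q-k}\in V^{\odot(q-k)}$. The factors $I_{q-k}(f_{t,q-k})$ are functionals of $(\tilde X_{e_n})_{n\geq 1}$, so the shift $\varepsilon j(h)$ leaves them unchanged. Kolmogorov is applied to these scalar processes inside $\mathcal G=\sigma(\tilde X_{e_n}:n\geq 1)$, so the H\"older modifications stay shift-invariant. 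All the $\varepsilon$-dependence then sits in the explicit Hermite polynomials $\tilde I_k(e_0^{\otimes k})$.

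\textbf{Your route.} You apply Theorem \ref{thm:main} at dyadic times and run Kolmogorov on all the derivative processes $t\mapsto\widetilde D^k\widetilde F_t$, $k\leq q$. You obtain invariance by defining the good set $\Omega_1$ intrinsically. The triangular structure of \eqref{eqn:taylorform} transfers H\"older bounds from $\omega$ to $\omega+j(g)$, and the group property of $j(\HHH)$ makes the complement of $\Omega_1$ invariant as well.

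\textbf{Comparison.} Your argument avoids Lemma \ref{indepnt} (the orthogonal decomposition and the product formula). It also avoids any appeal to pointwise shift-invariance of $\mathcal G$-measurable modifications. The cost is a Kolmogorov argument for $\HHH^{\otimes k}$-valued processes, which the paper already asserts after \eqref{eq:HolderF0}.

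It also gives more than the statement asks. Your $\Omega_1$ and $G^0$ do not depend on $h$, so a single version satisfies $\widetilde F_t(\omega+j(g))=\sum_{k=0}^{q}\frac{1}{k!}\langle G^k_t(\omega),g^{\otimes k}\rangle$ for all $g\in\HHH$, all $t$ and all $\omega\in\Omega_1$. That one version serves every direction at once and coincides almost surely with the process $S^h$ of Proposition \ref{prop:Version}.

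The paper's route, in exchange, needs only scalar Kolmogorov and shows explicitly which part of $F_t$ moves under the shift.

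\textbf{A small refinement.} Imposing the H\"older condition for a sequence $\theta_n\uparrow H$ in the definition of $\Omega_1$ makes your version independent of $\beta$ too. Your strict inequality $\theta>H-\beta$ is exactly what \eqref{eq:injection2} needs.
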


The proof of Theorem \ref{thm:ray} relies on the following lemma. As its proof is somewhat tangential to the main thread of this paper, we have chosen to postpone it to Appendix \ref{appendix:proofofLemma}.

\begin{Lemma}\label{indepnt}
	Let $q$ be a positive integer, $e_0\in \HHH$, with $\|e_0\|_{\HHH}=1$, and  $(f_t)_{t\in[0,T]}$ be a family of vectors in  $\HHH ^{\odot q}$.   
	We denote by $V$ the orthogonal complement of $\operatorname{span}\{e_0\}$ in $\HHH$. 
	Hence, there exist $(f_{t,q-k})_{t\in [0,T]}\subset V^{\odot  {(q-k)}}$, with $0 \leq k \leq q$, such that
	\begin{equation}\label{indep}
		I_q(f_t)= \sum _{k=0}^{q} I_{k} (e_0^{\otimes k}) I_{q-k} (f_{t,q-k}),
	\end{equation}
	where $I_{0} (h^{\otimes 0})=1$. Furtheremore, for each $0 \leq k \leq q$, there exists a positive constant $C$ such that for any $s,t\in [0,T]$,
	\begin{equation}\label{eq:ftn-k-holder}
		\|f_{t,q-k}-f_{s,q-k}\|_{\HHH^{\otimes {(q-k)}}}\leq C \|f_t-f_s\|_{\HHH^{\otimes q}}.
	\end{equation}
\end{Lemma}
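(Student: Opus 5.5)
The plan is to decompose the symmetric tensor space along the orthogonal splitting $\HHH=\operatorname{span}\{e_0\}\oplus V$. Since $\HHH^{\otimes q}=\bigoplus_{k=0}^q \binom{q}{k}$ copies of $e_0^{\otimes k}\otimes V^{\otimes(q-k)}$ (up to ordering of factors), the symmetric part satisfies the orthogonal direct sum
\[
\HHH^{\odot q}=\bigoplus_{k=0}^{q} \left(e_0^{\otimes k}\,\widetilde{\otimes}\, V^{\odot(q-k)}\right).
\]
For each $0\le k\le q$, let $P_k$ be the orthogonal projection of $\HHH^{\otimes q}$ onto $e_0^{\otimes k}\otimes V^{\otimes(q-k)}$. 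Concretely, $P_k=\pi_0^{\otimes k}\otimes\pi_V^{\otimes(q-k)}$, where $\pi_0$ and $\pi_V$ are the projections onto $\operatorname{span}\{e_0\}$ and $V$. Because $f_t$ is symmetric, the natural candidate is
\[
f_{t,q-k}:=\binom{q}{k}\,\langle f_t, e_0^{\otimes k}\rangle_{\HHH^{\otimes k}},
\]
where the contraction is taken in the first $k$ variables and the result is then projected onto $V^{\otimes(q-k)}$. This element lies in $V^{\odot(q-k)}$: it is symmetric in its remaining variables because $f_t$ is symmetric and $\pi_V^{\otimes(q-k)}$ commutes with permutations. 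Expanding $f_t=\big(\sum_k \pi_0+\pi_V\big)^{\otimes q} f_t$ and using symmetry to gather the $\binom{q}{k}$ placements of the $e_0$ factors gives
\[
f_t=\sum_{k=0}^q e_0^{\otimes k}\,\widetilde{\otimes}\, f_{t,q-k}.
\]
The binomial constant will be fixed exactly at this step.

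Next I would establish \eqref{indep} through the identity
\[
I_q\!\left(e_0^{\otimes k}\,\widetilde{\otimes}\, g\right)=I_k(e_0^{\otimes k})\,I_{q-k}(g), \qquad g\in V^{\odot(q-k)}.
\]
This follows from the product formula \eqref{eq:product-formula}: every contraction $e_0^{\otimes k}\otimes_r g$ with $r\ge1$ vanishes, since $\langle e_0,v\rangle_{\HHH}=0$ for all $v\in V$. Only the $r=0$ term survives, and it is exactly $I_q(e_0^{\otimes k}\widetilde{\otimes} g)$. Summing over $k$ and using linearity of $I_q$ yields \eqref{indep}. As a sanity check, one can alternatively verify the identity on simple tensors via the Hermite factorization \eqref{eqn:factorisationofF}, but the product formula route is cleaner.

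For \eqref{eq:ftn-k-holder}, I would use linearity: $f_{t,q-k}-f_{s,q-k}$ is the same linear map applied to $f_t-f_s$. That map is a contraction with the unit vector $e_0^{\otimes k}$ followed by a projection, multiplied by $\binom{q}{k}$. Cauchy–Schwarz then gives
\[
\|f_{t,q-k}-f_{s,q-k}\|_{\HHH^{\otimes(q-k)}}\le \binom{q}{k}\|f_t-f_s\|_{\HHH^{\otimes q}},
\]
so one may take $C=\binom{q}{k}$. No real obstacle is expected here.

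The main point needing care is the combinatorial bookkeeping in the decomposition of $f_t$: checking that symmetrisation with the normalisation $\binom{q}{k}$ reproduces $f_t$ exactly. I would verify this first on $f_t=h^{\otimes q}$ with $h=a e_0+v$. In that case $f_{t,q-k}=\binom{q}{k}a^k v^{\otimes(q-k)}$, and the binomial expansion of $(ae_0+v)^{\otimes q}$ reproduces $f_t$. The general case then follows by linearity and density, since both sides are continuous linear in $f_t$.
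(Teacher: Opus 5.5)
Your proposal is correct and follows essentially the same route as the paper. You decompose $f_t$ according to the number of $e_0$ factors, use $e_0\perp V$ to kill every contraction with $r\geq 1$ in the product formula \eqref{eq:product-formula}, and deduce \eqref{eq:ftn-k-holder} from the boundedness of the linear map $f_t\mapsto f_{t,q-k}$. Your coordinate-free choice $f_{t,q-k}=\binom{q}{k}\,\pi_V^{\otimes(q-k)}\langle f_t,e_0^{\otimes k}\rangle_{\HHH^{\otimes k}}$ is actually more careful than the paper's basis expansion, which drops the factor $\binom{q}{k}$ when grouping terms and lets the indices run from $0$, so that its $f_{t,q-k}$ need not lie in $V^{\odot(q-k)}$.
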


We now have all the ingredients needed to prove Theorem \ref{thm:ray}.

\begin{proof}[Proof of Theorem \ref{thm:ray}]
	Let $h\in \HHH$ be fixed.  The case \(h=0\) is trivial, so we assume \(h\neq 0\) and set \(e_0:=h/\norm{h}_{\HHH}\).
	%We split the proof into three steps.
%\medskip
	%\noindent\textbf{1) Step 1.}
	Let $V$ be the orthogonal complement of $\operatorname{span}\{e_0\}$ in $\HHH$ and $(e_n)_{n\geq 1}$ an orthonormal basis of $V$. 
	Then $(e_n)_{n\geq 0}$ is an orthonormal basis of $\HHH$.
	Let $X=\{X_g\,: g\in \HHH\}$ denote the isonormal Gaussian process on the abstract Wiener space $(\Omega,\mathcal{F},\P,\mathfrak{H})$ introduced in Subsection \ref{subsec:abstractwienerspaces}, and let $\tilde{X}=\{\tilde{X}_g\,:\;g\in \HHH\}$ be its version provided by Proposition \ref{prop:casdebase}, whose notations we adopt here. Consider first the event
\[ \Omega_h = \bigcap_{n \geq 1} \Omega_{g_n}.\]
One clearly has $\P(\Omega_{h})=1$ and $\Omega_{h} + \R j(h)\subset \Omega_{h}$, where
\[ \Omega_{h} + \R j(h):=\{\omega + \varepsilon j(h)\, : \, \omega\in \Omega_{h},\, \varepsilon\in \R\}.\]
Moreover, by equation \eqref{eqn:fundamental}, for every $\omega \in \Omega_{h}$ and $\varepsilon\in \R$,
		\begin{align}\label{eq:xi}
			\tilde{X}_{e_0}(\omega+\varepsilon j(h))=\tilde{X}_{e_0}(\omega)+\varepsilon\norm{h}_{\HHH}^2, \quad \text{ and } \quad \tilde{X}_{e_n}(\omega+\varepsilon j(h))=\tilde{X}_{e_n}(\omega),
		\end{align}
		for  $n\ge 1$.

Since Proposition \ref{prop:casdebase} ensures that $\tilde{X}_g=X_g$ $\P$-a.s.\ for every $g \in \HHH$, we immediately obtain, for every $t\in [0,T]$,
\begin{align}\label{eq:Ft=It}
	F_t=I_q(f_t)=\tilde{I}_q(f_t) \quad\P\text{-a.s.}
\end{align}
	Furthermore, Lemma \ref{indepnt} yields the existence of families $(f_{t,q-k})_{t\in [0,T]}\subset V^{\odot  {(q-k)}}$, for $k=0,\ldots, q$, such that, for every $t\in [0,T]$,
	\begin{equation}\label{indep0}
		\tilde{I}^h_q(f_t)= \sum _{k=0}^{q}  \tilde{I}_{k}(e_0^{\otimes k}) (f_{t,q-k}) \quad\P\text{-a.s.}
	\end{equation}
	Consider now the $\sigma$-algebra
	$$
	\mathcal{G}:=\sigma(\tilde{X}_{e_n}:\,n\ge 1).
	$$
	For every $t \in [0,T]$ and $0 \leq k \leq q$, since $f_{t,q-k} \in V^{\odot  {(q-k)}}$ and $(e_n)_{n\geq 1}$ is an orthonormal basis of $V$, there exists a sequence of polynomials $(P_{n_\ell})_{\ell \in \N}$ such that $(P_{n_\ell}(\tilde{X}_{e_1},\dots,\tilde{X}_{e_{n_\ell}}))_{\ell \in \N}$ converges almost surely to $\tilde{I}_{q-k} (f_{t,q-k})$; see for instance the proof of \cite[Lemma 4.3]{Nourdin2013}.
	Introduce the event
	\[ \Omega_{h,t} := \{ \omega \in \Omega \, : \, (P_{n_\ell}(\tilde{X}_{e_1}(\omega),\dots,\tilde{X}_{e_{n_\ell}}(\omega)))_{\ell \in \N} \text{ converges}\}.\]
	It is clear that $\Omega_{h,t,k} \in \mathcal{G}$ and $\mathbb{P}(\Omega_{h,t})=1$. Accordingly, the random variable
	\[ Y^{h}_{t,k} \, : \, \omega \mapsto \begin{cases}
	\lim_{\ell \to + \infty} P_{n_\ell}(\tilde{X}_{e_1}(\omega),\dots,\tilde{X}_{e_{n_\ell}}(\omega)) & \text{ if } \omega \in \Omega_{h,t,k} \\
	0 & \text{ otherwise}
	\end{cases}\]
	is $\mathcal{G}$-measurable and almost surely equal to $\tilde{I}_{q-k} (f_{t,q-k})$.
	Combining the hypercontractivity property for multiple Wiener--It\^o integrals \eqref{eq:hyper}, the isometry formula \eqref{eq:isom}, inequality \eqref{eq:ftn-k-holder} and Hypothesis $\mathbf{(H_2)}$, we deduce that, for every $0 \leq k \leq q$, $p \geq 2$ and $s,t\in [0,T]$,
	\begin{align*}
	\E&[|Y^{h}_{t,k} - Y^h_{s,k}|^p] \\
		&=\E[|\tilde{I}^h_{q-k} (f_{t,q-k}) - \tilde{I}^h_{q-k} (f_{s,q-k})|^p] \\
		&\leq (p-1)^{p(q-k)/2}\big\{\E[|\tilde{I}^h_{q-k} (f_{t,q-k}) - \tilde{I}^h_{q-k} (f_{s,q-k})|^2]\big\}^{p/2}\\
		&=(p-1)^{p(q-k)/2}\big((q-k)!\big)^{p/2} \|f_{t,q-k}-f_{s,q-k}\|_{\HHH^{\otimes {q-k}}}^p\\
		&\leq C(p-1)^{p(q-k)/2}\big((q-k)!\big)^{p/2} \|f_{t}-f_{s}\|_{\HHH^{\otimes q}}^p\\
		&\leq \tilde{C}(p-1)^{p(q-k)/2}\big((q-k)!\big)^{p/2} |t-s|^{pH}.
	\end{align*}
	Hence, for any $\beta \in (0,H-\frac{1}{2})$, the Kolmogorov continuity theorem applied on the abstract Wiener space $(\Omega, \mathcal{G}, \P, V)$ yields, for each $0 \leq k \leq q$, a $\mathcal{G}$-measurable stochastic process $(\tilde{Y}^{h}_{t,k})_{t\in [0,T]}$ with $(H-\beta)$-Hölder continuous sample paths and such that, for every $t\in [0,T]$,
	\begin{align}\label{eq:Jt=It}
		\tilde{Y}^{h}_{t,k}=\tilde{I}^h_{q-k} (f_{t,q-k})\quad \P\text{-a.s.}
	\end{align}
	We are now ready to introduce the stochastic process $(\tilde F^{\,h}_t)_{t\in[0,T]}$ defined, for every $t \in [0,T]$, by
	\begin{equation}\label{eq:defFt}
		\tilde{F}^{\,h}_t:= \sum _{k=0}^{q} \tilde{I}_{k}(e_0^{\otimes k}) \tilde{Y}^{h}_{t,k} \mathbbm{1}_{\Omega_{h}}.
	\end{equation}
It remains to check that it enjoys the announced properties.

First, combining \eqref{eq:Ft=It}, \eqref{indep0}, \eqref{eq:Jt=It} and \eqref{eq:defFt} with $\mathbb{P}(\Omega_h)=1$, we see that $\tilde{F}^{\,h}_t=F_t$ $\P$-a.s.\ for every $t\in [0,T]$.

Secondly, the $\mathcal{G}$-measurability of $(\tilde{Y}^{h}_{t,k})_{t \in [0,T]}$, together with \eqref{eq:xi}, yields, for every $0 \leq k \leq q$, $t\in [0,T]$, $\varepsilon\in \R$ and $\omega\in \Omega_{h}$,
\begin{align*}
	 \tilde{Y}^{h}_{t,k}(\omega+ \varepsilon j(h))=	 \tilde{Y}^{h}_{t,k}(\omega),
\end{align*}
while the first part of the proof of Theorem \ref{thm:main}, combined with equality \eqref{eqn:derivativeofintegral}, gives, for the same $\varepsilon$ and $\omega$,
\[ \tilde{I}_{k}(e_0^{\otimes k}) = \sum_{\ell=0}^q \frac{\varepsilon^\ell}{\ell!} \langle \tilde{I}_{k-\ell}(e_0^{\otimes k})(\omega),h^{\otimes \ell} \rangle_{\mathfrak{H}^{\otimes \ell}} = \sum_{\ell=0}^q \frac{\varepsilon^\ell}{\ell!} \|h\|_\mathfrak{H}^\ell  \tilde{I}_{k-\ell}(e_0^{\otimes {k-\ell}})(\omega).\]
Altogether, since $\Omega_{h} + \R j(h)\subset \Omega_{h}$ and $\tilde{Y}^{h}_{t,k}$ vanishes outside $\Omega_{h}$ for every $t \in [0,T]$ and $0 \leq k \leq q$, we obtain, for every $\omega \in \Omega$, $t \in [0,T]$ and $\varepsilon \in \R$,
\begin{equation}\label{eq:defFt0}
	\tilde{F}^{\,h}_t(\omega+\varepsilon j(h))= \sum _{k=0}^{q} \left( \sum_{\ell=0}^q \frac{\varepsilon^\ell}{\ell!} \|h\|_\mathfrak{H}^\ell  \tilde{I}_{k-\ell}(e_0^{\otimes {k-\ell}})(\omega)\right)\tilde{Y}^{h}_{t,k}(\omega).
\end{equation}
This formula, together with the $(H-\beta)$-Hölder continuity of the sample paths of $(\tilde{Y}^{h}_{t,k})_{t\in [0,T]}$ for every $0 \leq k \leq q$, entails that, for every $\omega\in \Omega$, the map $(t,\varepsilon)\mapsto \tilde{F}^{\,h}_t(\omega+\varepsilon j(h))$ is continuous, and the map from $\R$ into $W_{2}^{1-\alpha}\left(0, T ; \mathbb{R}^{m}\right)$ defined by $\varepsilon\mapsto \tilde{F}^{h}_{\star}(\omega + \varepsilon j(h))$ is of class $\mathcal{C}^{\infty}$ on $\R$.
This completes the proof of Theorem \ref{thm:ray}.
\end{proof}

\section{Malliavin smoothness of the solution} \label{sec:smoothness}

The tools built in Section \ref{sec:tools} bring together all the quantities announced in the roadmap at the end of Subsection \ref{sub:malialter}. We can now prove that the solution of the stochastic differential equation \eqref{eq:SDEsDrivenF} is smooth in the Malliavin sense.

\begin{Thm}\label{thm:MallDiff}
	Let $F=\{F_t=(I_q(f_t^1),\ldots,I_q(f_t^m))\,,\;t\in \R_+\}$ be an $\R^m$-valued chaos process on an abstract Wiener space $(\Omega,\mathcal{F},\P,\mathfrak{H})$.  Assume that, $F$ verifies the hypotheses $\mathbf{(H_2)}$ with $H\in (\frac{1}{2},1)$. Let us also assume  that the coefficients $(b_k)_{1 \leq k \leq d}$ and $(\sigma_{k,\ell})_{1 \leq k \leq d, 1 \leq \ell \leq m}$ of the stochastic differential equation \eqref{eq:SDEsDrivenF} satisfy  the hypotheses $\mathbf{(H_1)}$.
	Then, the unique solution to \eqref{eq:SDEsDrivenF}, $(X_t)_{t \in [0,T]}$, is such that for any $t\in(0,T]$, $X_t\in (\D^{1,\infty})^d$, and, for any $h\in\HHH$ and $1 \leq k \leq d$, we have
	\begin{align}\label{eq:MallDiffXt}
		\left<D X_t^k, h\right>_{\HHH}=  \sum_{\ell=1}^{m} \int_{0}^{t} \Theta_{t}^{k,\ell}(s) \mathrm{d} \left<DF^{\ell}(s), h\right>_{\HHH},
	\end{align}
where 	for $1 \leq k \leq d$, $1 \leq \ell \leq m, \, 0 \leq s \leq t \leq T, s \mapsto \Theta_{t}^{k,\ell}(s)$ satisfies
\begin{align}
	\Theta_{t}^{k,\ell}(s)=\sigma_{k,\ell}\left(X_s\right) &+ \sum_{p=1}^{d} \int_{s}^{t} \partial_{p} b_{k}\left(X_u\right) \Theta_{u}^{p, \ell}(s) \mathrm{d} u \nonumber \\
	&+ \sum_{p=1}^{d} \sum_{q=1}^{m} \int_{s}^{t} \partial_{p} \sigma_{k,q}\left(X_u\right) \Theta_{u}^{p, \ell}(s) \mathrm{d} F^{q}_u,\label{eq:Theta}
\end{align}
and $\Theta_{t}^{k, \ell}(s)=0$ if $s>t$. 
\end{Thm}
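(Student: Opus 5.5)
The plan is to verify membership in $\hat{\mathbb{D}}^{1,p}$ for every $p$ and then invoke Theorem \ref{thm:KusuokaStroock=Shigekawa}. Membership requires three things: $L^p$ integrability of $X_t$, the \textbf{(SGD)} property with an $L^p$ derivative, and the \textbf{(RAC)} property. Throughout, fix $\beta\in(0,H-\tfrac12)$ and set $1-\alpha=H-\beta$.

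\textbf{(SGD).} Fix $h\in\HHH$.
\begin{enumerate}[(i)]
\item By Proposition \ref{prop:Version}(a) and the definition \eqref{defofGamma}, $\Gamma_{h,t}(\varepsilon,\bullet)=\Psi_t(S^h_{\star,\varepsilon})$ is a version of $X_t(\bullet+\varepsilon j(h))$ for each fixed $\varepsilon$. Hence
\[
\frac{X_t(\bullet+\varepsilon j(h))-X_t}{\varepsilon}=\frac{\Gamma_{h,t}(\varepsilon,\bullet)-\Gamma_{h,t}(0,\bullet)}{\varepsilon}\quad \text{a.s.}
\]
\item Proposition \ref{lem:Gamma} shows that this quotient converges almost surely, hence in probability, to $\mathcal{D}\Psi^k_t(F_\star)[\langle DF_\star,h\rangle_\HHH]$.
\item By Proposition \ref{pro:Nualart}, this limit equals the right-hand side of \eqref{eq:MallDiffXt}, with $\Theta$ solving \eqref{eq:Theta} for $\varphi=F_\star(\omega)$.
\item The candidate derivative is $G(\omega)[j(h)]:=\mathcal{D}\Psi_t(F_\star(\omega))[\langle DF_\star(\omega),h\rangle]$. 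It is linear in $h$. To see that it is Hilbert–Schmidt, write it as $\langle \mathcal{K}_t(\omega),h\rangle_\HHH$ with
\[
\mathcal{K}_t=\sum_\ell\int_0^t\Theta^{k,\ell}_t(s)\,dDF^\ell_s ,
\]
an $\HHH$-valued Young integral. It is well defined because $s\mapsto DF_s$ is $(H-\beta)$-Hölder in $\HHH$ (the Kolmogorov versions after \eqref{eq:HolderF0}) and $\Theta_t(\cdot)$ is $(1-\alpha)$-Hölder. Since the map $h\mapsto\langle \cdot,h\rangle$ is bounded, $G$ is represented by the vector $\mathcal{K}_t\in\HHH$, so $G$ is Hilbert–Schmidt.
\end{enumerate}

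\textbf{Integrability.} I need $X_t\in L^p$ and $\mathcal{K}_t\in L^p(\Omega,\HHH)$ for all $p$. The deterministic estimates of \cite{MR1893308,nualart2009malliavin} give bounds of the form
\[
\|x\|_{\alpha,1}\le C\exp\bigl(C\|\varphi\|_{1-\alpha,2}^{1/(1-2\alpha)}\bigr)\quad\text{and}\quad \sup_s\|\Theta_t(s)\|\le C\exp\bigl(C\|\varphi\|_{1-\alpha,2}^{1/(1-\alpha)}\bigr).
\]
With $\varphi=F_\star$, the norm $\|F\|_{1-\alpha,2}$ is controlled by a Hölder norm, which by Garsia–Rodemich–Rumsey lies in $\bigcap_p L^p$. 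However, $F$ lives in the $q$th chaos, so it only has tails of order $\exp(-c x^{2/q})$. \textbf{This is the main obstacle}: exponential moments of a power of $\|F\|$ larger than $2/q$ are infinite in general. I would first try to avoid the Gronwall-type exponential bound by using the linear growth structure together with a partition-into-small-intervals argument, as in Hu–Nualart, which yields bounds of the type $\exp(C N)$ where $N$ is the number of intervals with small local Hölder norm. If this only gives moments for small $p$, I would fall back to showing $X_t\in\mathbb{D}^{1,p}_{loc}$ (or working with bounded truncations) and note that the statement "$\D^{1,\infty}$" may need to be read with that caveat. The bound for $\|\mathcal{K}_t\|_\HHH$ follows in the same way, combining $\|DF\|_{H-\beta}\in\bigcap_pL^p$ with the bound on $\Theta$ and Hölder's inequality.

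\textbf{(RAC).} Take the version $\tilde F^h$ of Theorem \ref{thm:ray} and set $\widetilde X^h_t:=\Psi_t(\tilde F^h_\star)$. This equals $X_t$ almost surely, since the two driving paths agree a.s. in $W^{1-\alpha}_2$: they are continuous versions of each other. By Theorem \ref{thm:ray}(c), $\varepsilon\mapsto\tilde F^h_\star(\omega+\varepsilon j(h))$ is $\mathcal{C}^\infty$ into $W^{1-\alpha}_2$ for every $\omega$. By Proposition \ref{pro:Nualart}, $\Psi$ is Fréchet differentiable, and continuously so by the implicit function representation \eqref{eqn:Nualartbis:implict}. So the composition is $\mathcal{C}^1$ in $\varepsilon$, hence absolutely continuous, for every $\omega$. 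With \textbf{(RAC)}, \textbf{(SGD)} and integrability in hand, Theorem \ref{thm:KusuokaStroock=Shigekawa} gives $X_t\in\D^{1,p}$ for all $p$. Identity \eqref{eq:TildeDEqualD} then yields \eqref{eq:MallDiffXt}.
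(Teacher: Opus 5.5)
Your \textbf{(SGD)} and \textbf{(RAC)} steps are the paper's argument: the version $\Gamma_{h,t}(\varepsilon,\bullet)=\Psi_t(S^h_{\star,\varepsilon})$ together with Proposition~\ref{lem:Gamma}, and $\widetilde X^h_t=\Psi_t(\tilde F^h_\star)$ built from Theorem~\ref{thm:ray}. Your two refinements are sound. The single $\HHH$-valued Young integral $\mathcal K_t$ gives one Hilbert--Schmidt random operator, independent of $h$, as \textbf{(SGD)} requires. Continuity of $\mathcal D\Psi$ makes the composed map $\mathcal C^1$, which properly justifies absolute continuity; everywhere-differentiability alone, which is all the paper cites, does not.

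The genuine gap is the one you flag yourself, and your proposed repair does not close it. By \eqref{eq:TildeD1p}, membership in $\hat{\mathbb D}^{1,p}$ requires $X^k_t\in L^p$ and $\hat D X^k_t\in L^p$, and $\mathbb D^{1,\infty}$ needs this for every $p$. A Hu--Nualart partition argument still controls $\Theta$ (and $X$ itself when $\sigma$ is unbounded) only by bounds of the form $\exp\bigl(C\|F\|_{H-\beta}^{1/(H-\beta)}\bigr)$. A $q$th-chaos process has tails no lighter than $\exp(-cx^{2/q})$ in general, and $2/q\le 1<1/(H-\beta)$ for $q\ge 2$, so this bound has infinite expectation.

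This is not merely a weakness of the estimates. Suppose $\mathbf{(H_1)}$ is read as bounding only the derivatives of the coefficients, which its wording permits. Take $d=m=1$, $b=0$, $\sigma(x)=x$, $x_0\neq 0$, and $F$ a Rosenblatt process ($q=2$). Then $X_t=x_0e^{F_t}$. Moreover $F_t=\sum_j\lambda_j(\xi_j^2-1)$ with $\lambda_j\ge 0$ not all zero, so $\mathbb E[e^{pF_t}]=\infty$ once $2p\max_j\lambda_j\ge 1$. Hence $X_t\notin\mathbb D^{1,p}$ for such $p$. With bounded $\sigma$, $X_t$ does have all moments. But $\hat D X_t$ still passes through $\Theta$, and no available estimate gives it moments of all orders when $q\ge 2$.

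You will not find the missing estimate in the paper. Its proof states that $X^k_t\in\hat{\mathbb D}^{1,\infty}$ ``reduces to'' \textbf{(RAC)} and \textbf{(SGD)}, and it never verifies the $L^p$ conditions of \eqref{eq:TildeD1p}. What this route actually establishes is \textbf{(RAC)}, \textbf{(SGD)} and formula \eqref{eq:MallDiffXt}. To obtain a Sobolev statement you need one of two things:
\begin{enumerate}[(i)]
\item restrict to $q=1$, where $1/(H-\beta)<2$ and Fernique's theorem make the exponential bounds integrable; or
\item use your fallback to $\mathbb D^{1,p}_{\mathrm{loc}}$, which suffices for a localized form of Theorem~\ref{thm:BouleauHirsch} in Section~\ref{sec:density}.
\end{enumerate}
Option (ii) requires constructing a genuine localizing sequence, for instance via Kusuoka's criterion that $\HHH$-continuously differentiable functionals lie in $\mathbb D^{1,2}_{\mathrm{loc}}$. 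Ray-wise differentiability along each fixed $h$, with $h$-dependent null sets, is not enough for this.
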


\begin{proof}
Fix $t\in [0,T]$ and $k\in\{1,\ldots,d\}$. By Theorem \ref{thm:KusuokaStroock=Shigekawa}, it suffices to show that $X_t^k\in \hat{\D}^{1,\infty}$, and this reduces to verifying that $X_t^k$ satisfies both \textbf{(RAC)} and \textbf{(SGD)}.

We begin with \textbf{(SGD)}. Let $S=\{S^{h}_{s,\varepsilon}\,:\;s\in[0,T],  \varepsilon\in\R, \, h\in\HHH\}$ denote the version of
 $R=\{R^h_{s,\varepsilon}= F(s,\bullet + \varepsilon\,j(h))\,;\;s\in[0,T],  \varepsilon\in\R, \, h\in\HHH\}$ described in \eqref{eq:defRS}. For every fixed $h\in\HHH$, $\varepsilon\in\R$ and $\omega\in\Omega$, the maps $s\mapsto S^{h}_{s,\varepsilon}(\omega)$ and $s\mapsto R^h_{s,\varepsilon}(\omega)$ are continuous, so that, for every fixed $h\in\HHH$ and $\varepsilon\in\R$, and for almost every $\omega\in\Omega$,
 \begin{equation}\label{eq:YEqualZ}
 R^h_{s,\varepsilon}(\omega)=S^{h}_{s,\varepsilon}(\omega), \qquad \text{ for all } s\in[0,T].
 \end{equation}
Consequently, for every fixed $h\in\HHH$ and $\varepsilon\in\R$, and for almost every $\omega\in\Omega$,
 \begin{equation}\label{eq:XEqualPsiZ}
 X_t(\omega + \varepsilon\,j(h))=\Gamma_{h,t}(\varepsilon,\omega).
\end{equation}
 %where $\Gamma_{h,t}^2$ is described in Lemma \ref{lem:Gamma}. 
Fix now $h\in\HHH$, $\varepsilon\in\R$ and $\lambda>0$. Using \eqref{eq:XEqualPsiZ}, we write
 \begin{align}\label{eq:ProbaCv}
 &\P\left[\left\|\frac{1}{\varepsilon}(X_t(\bullet + \varepsilon\,j(h))-X_t) - \mathcal{D} \Psi_t(F_\star(\bullet))\left[\langle D F_\star(\bullet), h\rangle_{\HHH}\right]\right\|_{\R^d}>\lambda\right]\nonumber\\
&= \P\left[\left\|\frac{1}{\varepsilon}(\Gamma_{h,t}(\varepsilon,\bullet)-\Gamma_{h,t}(0,\bullet)) - \mathcal{D} \Psi_t(F_\star(\bullet))\left[\langle D F_\star(\bullet), h\rangle_{\HHH}\right]\right\|_{\R^d}>\lambda\right].
 \end{align}
In view of \eqref{eq:DerivativeOfXt} and the fact that almost sure convergence implies convergence in probability, the right-hand side of \eqref{eq:ProbaCv} tends to zero as $\varepsilon\to 0$, for every $\lambda>0$. This shows that $X_t$ satisfies \textbf{(SGD)}, with
 \begin{align}\label{eq:DXt}
	\left<DX_t, h\right>_{\HHH}=\mathcal{D} \Psi_t(F_\star)\left[\langle D F_\star, h\rangle_{\HHH}\right].
 \end{align}
 Proposition \ref{pro:Nualart} then yields the expression \eqref{eq:MallDiffXt}.

\medskip

 We now turn to \textbf{(RAC)}. According to Theorem \ref{thm:ray}, for each $1 \leq \ell \leq m$ and $h\in \HHH$, fix $\beta \in (0,H-\frac{1}{2})$, set $1-\alpha=H-\beta$, and let $(\tilde{F}^{\,h,\ell}_t)_{t\in [0,T]}$ be the following version of $F^\ell$:
	\begin{enumerate}[(a)]
	\item For any $t\in [0,T]$,  $\tilde{F}^{\,h,\ell}_t=F^\ell_t$ $\P$-a.s.;
	\item For every $\omega\in\Omega$, the mapping
	$
	\R_+\times\R \ni (t,\varepsilon)\longmapsto \widetilde F^{\,h,\ell}_t\big(\omega+\varepsilon\,j(h)\big)
	$
	is continuous.
	\item For all $h\in \HHH$, and  all $\omega\in\Omega$, the mapping from $\R$ into $W_{2}^{1-\alpha}\left(0, T ; \mathbb{R}^{m}\right)$ defined by $\varepsilon\mapsto \tilde{F}^{h,\ell}_{\star}(\omega + \varepsilon j(h))$ is of class $\mathcal{C}^{\infty}$ on $\R$.
	\end{enumerate}
Set $\tilde{F}^h_t=(\tilde{F}^{\,h,1}_t, \ldots, \tilde{F}^{\,h,m}_t)$ and $\tilde{X}^h_{t}=\Psi_t(\tilde{F}_\star^h)$. Property (b) above guarantees that, for every $h\in\HHH$ and $\omega\in\Omega$, the map $s\mapsto \tilde{F}^{h}_{s}(\omega)$ is continuous. On the other hand, Hypothesis $\mathbf{(H_2)}$ and the Kolmogorov continuity theorem ensure that $s\mapsto F_{s}(\omega)$ is also continuous. Combining this with (a), we obtain, for every $h\in\HHH$ and almost every $\omega\in\Omega$,
\begin{equation}\label{eq:YEqualZ0}
	\tilde{F}^h_s(\omega)=F_{s}(\omega), \qquad \text{ for all } s\in[0,T].
\end{equation}
Consequently, for every $h\in\HHH$ and almost every $\omega\in\Omega$,
\begin{equation}\label{eq:XEqualPsiZ0}
	\tilde{X}^h_{t}(\omega)=\Psi_t(\tilde{F}^h_\star (\omega))= \Psi_t(F_\star(\omega))=X_{t}(\omega).
\end{equation}
Moreover, Proposition \ref{pro:Nualart} gives the differentiability of $\Psi_t$, while property (c) above ensures that, for every $h\in\HHH$ and $\omega\in\Omega$, the map $\R\ni \varepsilon\mapsto \tilde{F}^h_{\star}(\omega + \varepsilon j(h))\in W_{2}^{1-\alpha}\left(0, T ; \mathbb{R}^{m}\right)$ is of class $\mathcal{C}^{\infty}$ on $\R$.
Hence, for every $h\in\HHH$ and every $\omega\in\Omega$, the map $\R\ni \varepsilon\mapsto \tilde{X}^h_{t}(\omega + \varepsilon j(h))$ is absolutely continuous on $\R$.
This establishes \textbf{(RAC)} and thereby completes the proof of Theorem \ref{thm:MallDiff}.
 \end{proof}
 
 \begin{Rmk}
 In \cite{nualart2009malliavin}, the authors study the solution $(X_t)_{t \in [0,T]}$ of equation \eqref{eq:SDEsDrivenF} driven by a multidimensional fractional Brownian motion with Hurst parameter $H \in (1/2,1)$, and prove that $X_t$ belongs to the local space $(\mathbb{D}_{\text{loc}}^{1,2})^d$ for every $t \in [0,T]$. Theorem \ref{thm:MallDiff} sharpens this conclusion by showing that $X_t \in (\mathbb{D}^{1,\infty})^d$.
 \end{Rmk}

 \begin{Rmk}
In the Gaussian setting, one usually requires the components of the driving process to be independent, as reflected by the Kronecker symbol in the covariance function \eqref{eqn:covFBM}. No such assumption is needed in Theorem \ref{thm:MallDiff}, which therefore refines the known results even in the Gaussian case.
 \end{Rmk}

 Using the {\it sewing lemma} in Hilbert spaces \cite[Proposition 2.1]{li2022mild}, we can also provide an $\HHH$-valued representation of the Malliavin derivative of $X$.

\begin{Thm}\label{thm:RepresentationDXt}
Under the conditions of Theorem \ref{thm:MallDiff}, we have, for all $1 \leq k \leq d$ and $t\in (0,T]$
\begin{enumerate}[(a)]
	\item 	
	\begin{align}
		D X_t^k &=  \sum_{\ell=1}^{m} \int_{0}^{t} \Theta_{t}^{k l}(r) d DF^l(r)\nonumber\\
		&:=\lim_{|\mathcal{P}|\to 0}\sum_{\ell=1}^{m}\sum_{[u,v]\in \mathcal{P}} \Theta_{t}^{k l}(u) (DF^l(v)-DF^l(u)),
	\end{align}
	where $\Theta^{k,\ell}_t$ is given by \eqref{eq:Theta} and the above limit is in $\HHH$, along an arbitrary sequence of partitions of $[0,t]$ with mesh tending to zero.
	Furthermore, for any $\beta\in (0, H-\frac{1}{2})$ there exists a constant $C>0$ such that
	\begin{align}\label{eq:estimate}
		&\left\|\sum_{\ell=1}^{m} \int_{s}^{t} \Theta_{t}^{k l}(r) d DF^l(r)- \sum_{\ell=1}^{d}\Theta_{t}^{k l}(s)\big(DF^l(t)-D^{(l)}F(s)\big)\right\|_{\HHH}\\
		&\leq C \sum_{\ell=1}^{m}\|\Theta_{t}^{k l}\|_{\mathcal{C}^{H-\beta_1}} \|DF^l\|_{\mathcal{C}^{H-\beta_2}} |t-s|^{2(H-\beta)};\nonumber
	\end{align}	
	uniform in $s,t\in [0,T]$. 
	\item for all $h \in \mathfrak{H}$, 
	\begin{align}
		&\left<D X_t^k, h\right>_{\HHH} =  \sum_{\ell=1}^{m} \int_{0}^{t} \Theta_{t}^{k l}(s) d \left<DF^l(s), h\right>_{\HHH}\nonumber\\
		&:=\lim_{|\mathcal{P}|\to 0}\sum_{\ell=1}^{m}\sum_{[u,v]\in \mathcal{P}} \Theta_{t}^{k l}(u) \left<DF^l(v)-DF^l(u),h\right>_{\HHH},	
	\end{align}
	where the limit is along an arbitrary sequence of partitions of $[0,t]$ with mesh tending to zero.
\end{enumerate}
\end{Thm}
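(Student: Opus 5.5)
We will apply the sewing lemma in Hilbert spaces (\cite[Proposition 2.1]{li2022mild}) to the $\HHH$-valued germ
\[
\Xi_{u,v} := \sum_{\ell=1}^m \Theta_t^{k,\ell}(u)\,\big(DF^\ell(v)-DF^\ell(u)\big), \qquad 0\le u\le v\le t,
\]
for fixed $t\in(0,T]$, $1\le k\le d$, and a fixed $\omega$ outside a null set.

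\textbf{Step 1: regularity of the two factors.} By \eqref{eq:HolderF0} with $j=1$ and the Kolmogorov theorem (the discussion following $\mathbf{(H_2)}$), the version of $s\mapsto DF^\ell(s)$ is $(H-\beta_2)$-Hölder as a map $[0,T]\to\HHH$ for every small $\beta_2>0$. For $\Theta_t^{k,\ell}$ we use \eqref{eq:Theta}. The first term $\sigma_{k,\ell}(X_s)$ is $(H-\beta)$-Hölder, since $X$ has the Hölder regularity of $F$ and $\sigma\in\mathcal{C}^3_b$. The remaining terms are Lebesgue and Young integrals over $[s,t]$, so their dependence on $s$ is Hölder of order $H-\beta_1$; this is the standard estimate on the linear equation for $\Theta$ from \cite{nualart2009malliavin}, i.e.\ Proposition \ref{pro:Nualart} applied with $\varphi=F(\omega)$. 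Hence $\Theta_t^{k,\ell}\in\mathcal{C}^{H-\beta_1}$ with a finite random norm.

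\textbf{Step 2: sewing.} Compute the defect
\[
\delta\Xi_{u,r,v}:=\Xi_{u,v}-\Xi_{u,r}-\Xi_{r,v}=-\sum_\ell\big(\Theta_t^{k,\ell}(r)-\Theta_t^{k,\ell}(u)\big)\big(DF^\ell(v)-DF^\ell(r)\big).
\]
Its $\HHH$-norm is bounded by $\sum_\ell\|\Theta_t^{k,\ell}\|_{\mathcal{C}^{H-\beta_1}}\|DF^\ell\|_{\mathcal{C}^{H-\beta_2}}|v-u|^{2H-\beta_1-\beta_2}$. Choosing $\beta_1+\beta_2=2\beta$ with $\beta<H-\tfrac12$ makes the exponent $2(H-\beta)>1$. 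The Hilbert-space sewing lemma then gives that the Riemann sums converge in $\HHH$ along any partitions with mesh tending to zero, independently of the partition sequence, and it also yields the remainder estimate \eqref{eq:estimate}.

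\textbf{Step 3: identification with $DX_t^k$, giving (a) and (b).} The map $\HHH\ni g\mapsto\langle g,h\rangle_\HHH$ is continuous and linear. Pairing the Riemann sums with $h$ therefore gives the Riemann–Stieltjes sums of $\int_0^t\Theta_t^{k,\ell}(s)\,d\langle DF^\ell(s),h\rangle_\HHH$, and these converge to the $\HHH$-limit paired with $h$. This proves the equality of limits in (b). The scalar integral here is the Young integral appearing in \eqref{eq:MallDiffXt}, which is legitimate because $s\mapsto\langle DF^\ell(s),h\rangle$ is Hölder of order $>1/2$. Theorem \ref{thm:MallDiff} identifies it with $\langle DX_t^k,h\rangle_\HHH$ almost surely. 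Since $\HHH$ is separable, we take $h$ in a countable dense set and intersect the null sets. The $\HHH$-valued limit $\mathcal{I}$ then satisfies $\langle\mathcal{I},h\rangle=\langle DX_t^k,h\rangle$ for all $h$ in that set, hence for all $h\in\HHH$, so $\mathcal{I}=DX_t^k$ a.s. This gives (a).

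\textbf{Main obstacle.} The delicate point is Step 1: proving pathwise Hölder continuity of $s\mapsto\Theta_t^{k,\ell}(s)$ in the \emph{lower} variable. I would argue from the integral equation via the Gronwall-type bounds of \cite{MR1893308,nualart2009malliavin}. Concretely, write
\[
\Theta_t(s)-\Theta_t(s')=\sigma(X_s)-\sigma(X_{s'})+\int_{s'}^{s}(\cdots)+\int_s^t(\ldots)\big(\Theta_u(s)-\Theta_u(s')\big),
\]
and close a linear Young-type estimate. A secondary point is handling the null sets so that the $\HHH$-valued limit exists simultaneously with the versions chosen for $DF$.
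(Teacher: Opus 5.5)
Your proposal is correct and follows essentially the same route as the paper: you obtain Hölder regularity of $s\mapsto DF^\ell(s)$ in $\HHH$ from $\mathbf{(H_2)}$ and Kolmogorov, use Hölder regularity of $s\mapsto\Theta_t^{k,\ell}(s)$, and apply the Hilbert-space sewing lemma \cite[Proposition 2.1]{li2022mild}, which gives both convergence of the Riemann sums and the remainder bound \eqref{eq:estimate}. The differences are minor: the paper simply cites \cite[Proposition 9]{nualart2009malliavin} for the Hölder regularity of $\Theta_t^{k,\ell}$ that you flag as the main obstacle (your linear Young/Gronwall argument is a valid self-contained substitute), and your Step 3 spells out the identification of the $\HHH$-valued limit with $DX_t^k$ through a countable dense set of directions $h$, a step the paper leaves implicit.
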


\begin{proof}
	We only prove point (a), as point (b) is obtained along the same lines from the {\it sewing lemma}. Fix $1 \leq k \leq d$. Hypothesis $\mathbf{(H_2)}$ together with the Kolmogorov continuity theorem imply that, for every $1 \leq \ell \leq m$, the map $s\mapsto DF^\ell_s$ belongs almost surely to $C^{H-\beta}([0,T], \HHH)$.
	Moreover, \cite[Proposition 9]{nualart2009malliavin} ensures that $s\mapsto \Theta_{t}^{k,\ell}(s)$ belongs to $C^{H-\beta}([0,T], \R)$ for every $1 \leq k \leq d$, $1 \leq \ell \leq m$ and $t\in (0,T]$.
	Invoking \cite[Proposition 2.1]{li2022mild}, we conclude that, for every $1 \leq k \leq d$, $1 \leq \ell \leq m$ and $t\in (0,T]$, the Young integral
	\begin{align}\label{eq:YoungIntegral}
		\int_{0}^{t} \Theta_{t}^{k,\ell}(s) d DF^l(s):=\lim_{|\mathcal{P}|\to 0}\sum_{[u,v]\in \mathcal{P}} \Theta_{t}^{k l}(u) (DF^l(v)-DF^l(u))
	\end{align}
	is well defined as a limit in $\HHH$ along any sequence of partitions of $[0,t]$ whose mesh tends to zero.
	Estimate \eqref{eq:estimate} is then a consequence of \eqref{eq:YoungIntegral} and the bound (2.9) of \cite[Proposition 2.1]{li2022mild}.
	%Now, for the second point,  we have 
	%\begin{align*}
	%	\left<\int_{0}^{t} \Theta_{t}^{k l}(s) d D^{(l)}F(s),h\right>_{\HHH} &= \lim_{|\mathcal{P}|\to 0}\sum_{[u,v]\in \mathcal{P}} \Theta_{t}^{k l}(u) \left<D^{(l)}F(v)-D^{(l)}F(u),h\right>_{\HHH}\\
	%	&= \int_{0}^{t} \Theta_{t}^{k l}(s) d \left<D^{(l)}F(s),h\right>_{\HHH}
	%\end{align*}
\end{proof}

\section{Absolute continuity of the law of the solution}\label{sec:density}

In this section, we show that the law of the solution to the stochastic differential equation \eqref{eq:SDEsDrivenF} is absolutely continuous with respect to the Lebesgue measure. Our main tool is the Bouleau--Hirsch criterion, a cornerstone of Malliavin calculus \cite{Bouleau1986}.

\begin{Thm} \label{thm:BouleauHirsch}
	Let $d$ be a positive integer and let $\mathbf{Y}=\left(Y_1, \ldots, Y_d\right)$ be a random vector satisfying the following two conditions:
\begin{enumerate}[(i)]
	\item $Y_i \in \mathbb{D}^{1, p}$ for all $i=1, \ldots, d$, for some $p \in(1, \infty)$;
	\item $\operatorname{det}\left(\Gamma_{\mathbf{Y}}\right)>0$ $\mathbb{P}$-a.s., where $\Gamma_{\mathbf{Y}}=\left(\left\langle D Y_i, D Y_j\right\rangle_{\mathfrak{H}}\right)_{1 \leq i, j \leq d}$ denotes the Malliavin matrix of $\mathbf{Y}$.
\end{enumerate}
Then the law of $\mathbf{Y}$ is absolutely continuous with respect to the Lebesgue measure on $\mathbb{R}^d$.
\end{Thm}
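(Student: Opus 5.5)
The Bouleau--Hirsch criterion is classical and is cited from \cite{Bouleau1986}. A self-contained proof would still follow the standard route, which I would take as follows.

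First I would reduce to $p=2$ and to bounded variables. Replacing $Y_i$ by $\arctan(Y_i)$ and using the chain rule preserves membership in $\mathbb{D}^{1,p}$. It also preserves a.s. invertibility of the Malliavin matrix, because the Jacobian of $\arctan$ is diagonal and positive. Absolute continuity is invariant under this diffeomorphism onto its image. For $p\geq 2$ we have $\mathbb{D}^{1,p}\subset\mathbb{D}^{1,2}$. The case $1<p<2$ needs Bouleau--Hirsch's original $L^p$ argument; the bounded-variable reduction plus a localisation covers the uses in this paper, where $p=\infty$ is available by Theorem \ref{thm:MallDiff}.

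The core step is to show that, for every Borel set $B\subset\mathbb{R}^d$ of Lebesgue measure zero, $\mathbb{P}(\mathbf{Y}\in B)=0$. By outer regularity it suffices to treat $B$ bounded. Take $\varphi_n$ smooth with compact support, $0\le\varphi_n\le 1$, converging to $\mathbf 1_B$ in $L^1(\mathbb{R}^d)$ and a.e. for the law of $\mathbf{Y}$ plus Lebesgue. Set $\psi_n(x)=\int_{-\infty}^{x_1}\varphi_n(t,x_2,\dots,x_d)\,dt$, and note that $\psi_n\to\psi$ with $\psi(x)=\int_{-\infty}^{x_1}\mathbf 1_B(t,x_2,\ldots,x_d)dt$. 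For Lebesgue-a.e. $(x_2,\dots,x_d)$ this function vanishes identically, and $|\psi_n|$ is bounded. The chain rule gives $D\psi_n(\mathbf{Y})=\sum_i\partial_i\psi_n(\mathbf{Y})DY_i$, bounded in $L^2(\Omega,\HHH)$ in the $i=1$ component. For $i\ge2$ I would instead apply the one-dimensional argument coordinatewise to each $Y_i$, or more cleanly use the version with $\psi_n$ defined via integration in all variables, i.e. $\psi_n=\varphi_n*\Phi$ with $\Phi$ the fundamental solution.

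The key point is then a closability argument. Since $\psi_n(\mathbf{Y})\to 0$ in $L^2$ (the limit $\psi$ vanishes a.e.) and $D\psi_n(\mathbf{Y})$ converges weakly, closability of $D$ forces the limit $\sum_i \mathbf 1_B(\mathbf{Y})\,\cdot(\ldots)DY_i$ to vanish. Pairing with $DY_j$ gives $\mathbf 1_B(\mathbf{Y})\Gamma_{\mathbf{Y}}e=0$ a.s. for a nonzero vector $e$. Since $\det\Gamma_{\mathbf{Y}}>0$ a.s., this yields $\mathbf 1_B(\mathbf{Y})=0$ a.s.

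The main obstacle is exactly this limit step. It requires identifying the weak limit of $\partial_i\psi_n(\mathbf{Y})$ without knowing a priori that $\mathbf{Y}$ has a density. This is why Bouleau--Hirsch work with the image measure $\mathbf{Y}_*(|\Gamma_{\mathbf{Y}}|\,\mathbb{P})$ and prove it has density via a coarea/Federer-type argument. Accordingly, I would simply invoke \cite{Bouleau1986} (see also \cite[Theorem 2.1.2]{Nualart}) rather than reproduce it.
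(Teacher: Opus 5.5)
Your fallback of simply invoking \cite{Bouleau1986} is exactly what the paper does: it quotes the criterion as a known tool and gives no proof. Your preliminary reductions ($\arctan$, $p\ge 2$) are also harmless. The self-contained sketch you call ``the standard route'' is not a viable outline for $d\ge 2$, however, and both repairs you propose fail.

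(i) Applying the one-dimensional argument to each $Y_i$ uses only the diagonal entries $\|DY_i\|_{\HHH}^2>0$ of $\Gamma_{\mathbf Y}$. It yields absolute continuity of each marginal, which never gives joint absolute continuity: for $Z$ standard Gaussian, $(Z,Z)$ has absolutely continuous marginals but lives on the diagonal.

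(ii) For $\psi_n=\varphi_n*\Phi$ one has $\nabla\psi_n=\varphi_n*\nabla\Phi$. Here $|\nabla\Phi(z)|=c|z|^{1-d}$ is locally integrable, $0\le\varphi_n\le 1$, and $\|\varphi_n\|_{L^1}\to\|\mathbbm{1}_B\|_{L^1}=0$. The bound $|\nabla\psi_n(x)|\le M\|\varphi_n\|_{L^1}+\int_{\{|\nabla\Phi|>M\}}|\nabla\Phi(z)|\,dz$, whose last term is independent of $x$ and small for large $M$, gives $\nabla\psi_n\to 0$ uniformly on $\R^d$. So $D\psi_n(\mathbf Y)\to 0$, and closability returns the tautology $0=0$. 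The indicator only reappears at second order, through $\Delta\psi_n=\varphi_n$. Exploiting that needs an integration by parts on Wiener space, hence second Malliavin derivatives and (local) control of $\Gamma_{\mathbf Y}^{-1}$; that is Malliavin's criterion, not Bouleau--Hirsch's.

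(iii) For the primitive in $x_1$, the components $\partial_i\psi_n(x)=\int_{-\infty}^{x_1}\partial_i\varphi_n(t,x_2,\dots,x_d)\,dt$ with $i\ge 2$ involve $\partial_i\varphi_n$. These in general blow up, because $\varphi_n$ approximates the indicator of a Lebesgue-null set, so there is no uniform bound and no weak limit to pass to. Moreover, even $\psi_n(\mathbf Y)\to\psi(\mathbf Y)=0$ only holds off a Lebesgue-null set of $(x_2,\dots,x_d)$. It therefore presupposes that the law of $(Y_2,\dots,Y_d)$ does not charge null sets of $\R^{d-1}$.

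Your diagnosis of the obstacle is therefore off. For the $i=1$ term there is nothing to identify: $\varphi_n\to\mathbbm{1}_B$ a.e.\ for the law of $\mathbf Y$ by construction, so $\varphi_n(\mathbf Y)DY_1\to\mathbbm{1}_B(\mathbf Y)DY_1$ strongly. That is precisely why the one-dimensional criterion (see \cite{Nualart}) closes without any a priori knowledge of a density. The real obstruction is that a first-order closability argument can only place $\mathbbm{1}_B$ in one component of a gradient whose other components are uncontrolled.

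What is actually needed is the energy image density property: the image of $\mathbbm{1}_{\{\det\Gamma_{\mathbf Y}>0\}}\cdot\P$ under $\mathbf Y$ is absolutely continuous. Bouleau and Hirsch establish this by combining a finite-dimensional geometric argument (area/coarea formula) with a passage to infinite dimension. Citing that result, as both you and the paper do, is the right treatment. The sketch should not be presented as an outline of its proof.
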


The absolute continuity of the law of the solution to a stochastic differential equation is typically obtained under various non-degeneracy conditions, imposed both on the coefficients and on the driving process. In what follows, we take as a starting point the formulation of these conditions proposed in \cite{MR2680405} within the framework of rough path theory.

We first impose the following ellipticity assumption on the coefficients $(b_k)_{1 \leq k \leq d}$ and $(\sigma_{k,\ell})_{1 \leq k \leq d, 1 \leq \ell \leq m}$ of \eqref{eq:SDEsDrivenF}.
\begin{center}
$\mathbf{(H_{3})}$ For any $x\in \R^d$, the vector space spanned by $\{\sigma_{1,\ell}(x),\ldots,\sigma_{d,\ell}(x)\,,\; 1\leq \ell \leq m\}$ is equal
	to $\R^d$.
\end{center}

The conditions imposed on the driving process in \cite{MR2680405} are specific to the Gaussian rough path setting. We first recall them before proposing their counterparts in our non-Gaussian framework.
\begin{center}
$\mathbf{(GRPH_{4})}$ The components of the driving Gaussian process are independent.
\end{center}
\begin{center}
$\mathbf{(GRPH_{5})}$ For any $g=(g^1,\dots,g^d) \, : \, [0,T] \to \R^d$ with bounded $p$-variation\footnote{In \cite{MR2680405}, the driving process is assumed to be $p$-rough path.},
\[\left( \int_0^T g_s dh_s = \sum_{k=1}^d \int_0^T g_s^k dh_s^k =0 ~\forall \, h \in j(\mathfrak{H})\right)\Rightarrow g=0. \]
\end{center}
In our setting, we use conditions analogous to $\mathbf{(GRPH_{4})}$ and $\mathbf{(GRPH_{5})}$, but formulated at the level of Malliavin derivatives. For the independence assumption, we adopt the following hypothesis.
\begin{center}
$\mathbf{(H_{4})}$ The components of the driving process in \eqref{eq:SDEsDrivenF} are Malliavin independent: for all $1 \leq \ell \neq \ell' \leq m$ and $t \in [0,T]$, almost surely,
\[ \langle DF_t^{\ell} , DF_t^{\ell'} \rangle_\mathfrak{H}=0.\]
\end{center}
When $F$ is Gaussian, it is straightforward to check that $\mathbf{(GRPH_{4})}$ and $\mathbf{(H_{4})}$ are equivalent. As for $\mathbf{(GRPH_{5})}$, the bounded $p$-variation assumption is natural in the rough path setting; in our context, it is more natural to work with a Hölder-type condition.
\begin{center}
$\mathbf{(H_{5})}$ The driving process in \eqref{eq:SDEsDrivenF} is non-degenerate, if for any fixed $t \in (0,T]$, and any $\R^m$ valued stochastic process $(Y_r)_{r \in [0,t]}$ such that the sample paths of $Y$ are $\gamma$-H\"{o}lder continuous, for some $\gamma>\frac{1}{2}$, we have
\begin{align*}
	&\bigg(\exists \Omega_0\in \mathcal{F}, \text{ with } \P(\Omega_0)>0, \text{ s.t., } \forall\omega\in \Omega_0, \text{ and } 1 \leq \ell \leq m, \\
	&\qquad\qquad\qquad\qquad\left\|\int_{0}^{t} Y^\ell_s(\omega)dDF^{\ell}(s,\omega)\right\|_{\HHH}=0\bigg)\\
	&\Longrightarrow  \bigg(\exists \Omega_1\in \mathcal{F}, \text{ with } \P(\Omega_1)>0, \; \exists s\in [0,t], \text{ s.t., }\forall\omega\in \Omega_1 \,  Y_s(\omega)=0 \;\;\; \bigg).
\end{align*}
\end{center}

We are now ready to state and prove the main result of this section.

\begin{Thm}\label{thm:AbsoluteContinuity}
	Let $F=\{F_t=(I_q(f_t^1),\ldots,I_q(f_t^m))\,,\;t\in \R_+\}$ be an $\R^m$-valued chaos process on an abstract Wiener space $(\Omega,\mathcal{F},\P,\mathfrak{H})$, satisfying hypotheses $\mathbf{(H_{2})}$, $\mathbf{(H_{4})}$ and $\mathbf{(H_{5})}$. Assume moreover that the coefficients $(b_k)_{1 \leq k \leq d}$ and $(\sigma_{k,\ell})_{1 \leq k \leq d,\, 1 \leq \ell \leq m}$ of the stochastic differential equation \eqref{eq:SDEsDrivenF} satisfy hypotheses $\mathbf{(H_1)}$ and $\mathbf{(H_3)}$.
Then, for every $t\in(0,T]$, the law of $X_t$, where $(X_t)_{t \in [0,T]}$ is the unique solution to \eqref{eq:SDEsDrivenF}, is absolutely continuous with respect to the Lebesgue measure on $\R^d$.
\end{Thm}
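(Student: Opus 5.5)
We apply the Bouleau--Hirsch criterion (Theorem \ref{thm:BouleauHirsch}) to $\mathbf{Y}=X_t$ for a fixed $t\in(0,T]$. Condition (i) is given by Theorem \ref{thm:MallDiff}, since $X_t\in(\mathbb{D}^{1,\infty})^d\subset(\mathbb{D}^{1,p})^d$ for every $p$. It therefore remains to show that the Malliavin matrix $\Gamma_{X_t}=(\langle DX_t^i,DX_t^j\rangle_{\HHH})_{i,j}$ is almost surely invertible. We argue by contradiction. Suppose the event $\Omega_0=\{\det\Gamma_{X_t}=0\}$ has positive probability. Choose measurably an $\R^d$-valued unit vector $v(\omega)$ with $v^{T}\Gamma_{X_t}v=0$ on $\Omega_0$, for instance a normalized eigenvector of the smallest eigenvalue. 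Then, on $\Omega_0$,
\[ \Big\|\sum_{k=1}^d v_k DX_t^k\Big\|_{\HHH}=0. \]

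By Theorem \ref{thm:RepresentationDXt}(a) this reads $\big\|\sum_{\ell=1}^m\int_0^t Y^\ell_s\, dDF^\ell(s)\big\|_{\HHH}=0$, where
\[ Y^\ell_s:=\sum_{k=1}^d v_k\Theta^{k,\ell}_t(s). \]
The paths $s\mapsto\Theta_t^{k,\ell}(s)$ are $(H-\beta)$-Hölder (\cite[Proposition 9]{nualart2009malliavin}). Choosing $\beta$ small makes them $\gamma$-Hölder with $\gamma>1/2$, so $Y$ is an admissible process for $\mathbf{(H_5)}$.

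The key intermediate step is to separate the sum over $\ell$, since $\mathbf{(H_5)}$ requires each $\int_0^tY^\ell dDF^\ell$ to vanish individually. Here I would use $\mathbf{(H_4)}$. Expanding the squared norm gives
\[ \sum_{\ell,\ell'}\Big\langle \int_0^tY^\ell dDF^\ell,\int_0^tY^{\ell'}dDF^{\ell'}\Big\rangle_{\HHH}. \]
Write each Young integral as a limit of Riemann sums in $\HHH$. The cross terms involve $\langle DF^\ell_v-DF^\ell_u,DF^{\ell'}_{v'}-DF^{\ell'}_{u'}\rangle_{\HHH}$ at different times. Hypothesis $\mathbf{(H_4)}$ is only stated at equal times, so I would try to upgrade it: apply $\mathbf{(H_4)}$ to the process $F_s+F_{s'}$ componentwise, or polarize, to get $\langle DF^\ell_s,DF^{\ell'}_{s'}\rangle=0$ for all $s,s'$. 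If this fails, I would interpret $\mathbf{(H_4)}$ as orthogonality of the closed spans $\overline{\mathrm{span}}\{DF^\ell_s(\omega):s\}$ in $\HHH$.

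This step, making the cross terms vanish so that the total norm is $\sum_\ell\|\int_0^tY^\ell dDF^\ell\|^2$, is the main obstacle. I expect some care or a mild reinterpretation of $\mathbf{(H_4)}$ to be needed, together with an almost-sure continuity argument over rational times. Once the cross terms vanish, each $\int_0^tY^\ell dDF^\ell=0$ on $\Omega_0$.

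Then $\mathbf{(H_5)}$ yields $\Omega_1$ of positive probability and some $s\in[0,t]$ with $Y_s=0$ on $\Omega_1$. Since $\Theta_t(t)=\sigma(X_t)$, the aim is to obtain this with $s=t$, or to reduce to it. If $s=t$, then $\sum_k v_k\sigma_{k,\ell}(X_t)=0$ for all $\ell$ with $v\neq0$, which contradicts the ellipticity $\mathbf{(H_3)}$.

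For a general $s$, I would use the flow structure: $\Theta_t(s)=J_{t}J_s^{-1}\sigma(X_s)$, where $J$ is the invertible Jacobian of the flow solving the linear equation in \eqref{eq:Theta}. With $w=(J_tJ_s^{-1})^{T}v\neq 0$ we get $w^{T}\sigma(X_s)=0$, again contradicting $\mathbf{(H_3)}$. Invertibility of this Jacobian for Young equations with $\mathcal{C}^3_b$ coefficients is standard via the adjoint linear equation. Hence $\P(\Omega_0)=0$, and Theorem \ref{thm:BouleauHirsch} concludes.
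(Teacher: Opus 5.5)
Your skeleton (Bouleau--Hirsch, contradiction on $\{\det\Gamma_{X_t}=0\}$, splitting over $\ell$ with $\mathbf{(H_4)}$, then $\mathbf{(H_5)}$ against $\mathbf{(H_3)}$) is the paper's. You pass from $DX_t$ to the coefficients differently, and your way is the sound one.

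The paper writes $\langle DX_t,e_n\rangle_{\HHH}=-(\mathcal{D}_2T(0,X)^{-1}\circ\mathcal{D}_1T(0,X))[\langle DF_\star,e_n\rangle_{\HHH}]_t$. Citing only that $\mathcal{D}_2T(0,X)^{-1}$ is a linear homeomorphism, it then drops this factor and applies $\mathbf{(H_5)}$ to $Y^\ell_s=\sum_k a^k\sigma_{k,\ell}(X_s)$. That inference does not hold. With $z=\mathcal{D}_2T^{-1}y$, the vanishing of the single number $\langle z_t,a\rangle$ says nothing about $\langle y_t,a\rangle$. In fact, by Proposition~\ref{pro:Nualart}, $\langle z_t,a\rangle=-\sum_{\ell}\int_0^t\sum_k a^k\Theta^{k,\ell}_t(s)\,d\langle DF^\ell_s,e_n\rangle_{\HHH}$, so $\Theta$, not $\sigma$, is what appears. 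Already for $d=m=1$, $b=0$ and linear $\sigma(x)=x$, one has $\Theta_t(s)=X_t$ while $\sigma(X_s)=X_s$.

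You instead keep $Y^\ell_s=\sum_k v_k\Theta^{k,\ell}_t(s)$ via Theorem~\ref{thm:RepresentationDXt}. You return to $\sigma$ only after $\mathbf{(H_5)}$, through $\Theta_t(s)=J_tJ_s^{-1}\sigma(X_s)$ and $w=(J_tJ_s^{-1})^{T}v\neq0$. This costs you the invertibility of the Jacobian of the Young flow (standard, via the adjoint linear equation, as in Nualart--Saussereau). In exchange, the contradiction with $\mathbf{(H_3)}$ is valid for whatever $s$ hypothesis $\mathbf{(H_5)}$ produces. Choosing $v$ of unit length on all of $\Omega$ is also exactly what the last step needs, since $\mathbf{(H_5)}$ does not guarantee $\Omega_1\subseteq\Omega_0$.

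Your concern about $\mathbf{(H_4)}$ is justified, but the upgrade you suggest cannot work. $\mathbf{(H_4)}$ only constrains the pair $(F^\ell_t,F^{\ell'}_t)$ at a common time, and $F_s+F_{s'}$ is not a value of the process. Take $q=1$, $F^1_t=X_{g_t}$, $F^2_t=X_{h_t}$ with $g_t=t(\cos t\,e_1+\sin t\,e_2)$, $h_t=t(-\sin t\,e_1+\cos t\,e_2)$ for orthonormal $e_1,e_2\in\HHH$. Then $\langle DF^1_t,DF^2_t\rangle_{\HHH}=0$ for all $t$, while $\langle DF^1_t,DF^2_s\rangle_{\HHH}=ts\sin(t-s)$. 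So the cross terms of $\|\sum_\ell\int_0^tY^\ell\,dDF^\ell\|_{\HHH}^2$ need not vanish under $\mathbf{(H_4)}$ as written.

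What is needed is your fallback: $\langle DF^\ell_s,DF^{\ell'}_{s'}\rangle_{\HHH}=0$ a.s.\ for all $s,s'$ and $\ell\neq\ell'$. This then holds simultaneously for all $s,s'$ by taking rational times and using the H\"older continuity of $s\mapsto DF_s$, after which every Riemann-sum cross term vanishes. The paper's proof tacitly uses exactly this stronger property when it ``invokes $\mathbf{(H_4)}$''. It does hold for the multidimensional Hermite process, since $DZ^{H,q,\ell}_s$ lives in the $\ell$th coordinate of $L^2(\R,\R^m)$ for every $s$. With $\mathbf{(H_4)}$ stated in this form, your argument is complete.
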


\begin{proof}
Parts of the proof are inspired by that of \cite[Theorem 8]{nualart2009malliavin}. Fix $t\in (0,T]$. By Theorem \ref{thm:MallDiff}, $X^k_t \in \mathbb{D}^{1,\infty}$ for every $1 \leq k \leq d$. In view of the Bouleau--Hirsch criterion (Theorem \ref{thm:BouleauHirsch}), it thus suffices to show that the determinant of the Malliavin matrix $\Gamma_{X_t}$ of $X_t$ is almost surely positive.
Let $\{e_n\,,\; n\geq 1\}$ be an orthonormal basis of $\HHH$. We can write
\begin{align*}
	\Gamma_{X_t}= (M_{k,k'})_{1 \leq k,k'\leq d}, 
\end{align*}
where
\begin{align*}
	M_{k,k'} = \left<D X^k_t, D X^{k'}_t\right>_{\HHH}= \sum_{n=1}^{\infty} \left<D X^k_t, e_n\right>_{\HHH}\left<D X^{k'}_t, e_n\right>_{\HHH}. 
\end{align*}
Assume, for contradiction, that $\det\Gamma_{X_t}=0$ on some $\Omega_0\in \mathcal{F}$ with $\P(\Omega_0)>0$. Then there exists a nonzero $\R^d$-valued random vector $a$ such that $a^T \Gamma_{X_t} a=0$ on $\Omega_0$. On the other hand, we may rewrite $a^T \Gamma_{X_t} a$ as
\begin{align}\label{eq:AtGammaAt}
	a^T \Gamma_{X_t} a &= \sum_{n=1}^{\infty} \left|\left< \left<D X_t, e_n\right>_{\HHH},a\right>_{\R^d}\right|^2,
\end{align}
where $\left<\cdot,\cdot\right>_{\R^d}$ is the Euclidean inner product on $\R^d$ and
$$\left<D X_t, e_n\right>_{\HHH} = \left(\left<D X^1_t, e_n\right>_{\HHH}, \ldots, \left<D X^d_t, e_n\right>_{\HHH}\right).$$
By \eqref{eq:DXt}, for almost every $\omega\in\Omega$, every $n\geq 1$ and every $1 \leq k \leq d$,
\begin{align*}
	\left<D X_t^k(\omega), e_n\right>_{\HHH} =  \mathcal{D} \Psi_t^k(F_\star(\omega))\left[\langle D F_\star(\omega), e_n\rangle_{\HHH}\right].
\end{align*}
Moreover, using \eqref{eqn:Nualartbis:implict}, we have, for almost every $\omega\in\Omega$, every $n\geq 1$ and every $1 \leq k \leq d$,
\begin{align*}
	&\mathcal{D}  \Psi_t^k(F_\star(\omega))\left[\langle D F_\star(\omega), e_n\rangle_{\HHH}\right] \\
	&= \left(\left(-\mathcal{D}_2T(0,X(\omega))^{-1}\circ \mathcal{D}_1T(0,X(\omega))\right)\left[\langle D F_\star(\omega), e_n\rangle_{\HHH}\right]\right)^k_t,
\end{align*}
where $T$ is the operator defined in \eqref{eqn:Nualartbis}. Combining this with \eqref{eq:AtGammaAt}, we deduce that, for every $\omega\in\Omega_0$ and every $n\geq 1$,
\begin{align*}
	\left< \left(\mathcal{D}_2T(0,X(\omega))^{-1}\circ \mathcal{D}_1T(0,X(\omega))\right)\left[\langle D F_\star(\omega), e_n\rangle_{\HHH}\right]_t , a(\omega) \right>_{\R^d} =0. 
\end{align*}  
Since $\mathcal{D}_2T(0,X)^{-1}$ is a linear homeomorphism (see Proposition \ref{pro:Nualartbis}), expression \eqref{eqn:Nualartbis:frechet1} then yields, for every $\omega\in\Omega_0$ and $n\geq 1$,
\begin{align*}
	0&=\left< \mathcal{D}_1T(0,X(\omega))\left[\langle D F_\star(\omega), e_n\rangle_{\HHH}\right]_t , a(\omega) \right>_{\R^d}\\
	&=  \sum_{k=1}^{d}\left(\sum_{\ell=1}^{m}\int_{0}^{t}  \sigma_{k,\ell}(X_s(\omega)) d \left<DF^{l}(s,\omega), e_n\right>_{\HHH}\right)a^k(\omega)\\
	&= \left<\sum_{k=1}^{d}\sum_{\ell=1}^{m}\int_{0}^{t}  a^k(\omega)\sigma_{k,\ell}(X_s(\omega)) dDF^{l}(s,\omega), e_n\right>_{\HHH}.
\end{align*}
Invoking the Malliavin independence of the components of $F$, i.e.\ hypothesis $\mathbf{(H_4)}$, we obtain, for every $\omega\in\Omega_0$,
\begin{align*}
	0&= \left\|\sum_{k=1}^{d}\sum_{\ell=1}^{m}\int_{0}^{t}  a^k(\omega)\sigma_{k,\ell}(X_s(\omega)) dDF^{\ell}(s,\omega)\right\|_{\HHH}^2\\
	&= \sum_{\ell=1}^{m}\left\|\sum_{k=1}^{d}\int_{0}^{t}  a^k(\omega)\sigma_{k,\ell}(X_s(\omega)) dDF^{\ell}(s,\omega)\right\|_{\HHH}^2,
\end{align*}
from which we conclude that, for every $1 \leq \ell \leq m$ and every $\omega\in\Omega_0$,
\begin{align}\label{eq:ZeroNorm}
	\left\|\sum_{k=1}^{d}\int_{0}^{t}  a^k(\omega)\sigma_{k,\ell}(X_s(\omega)) dDF^{\ell}(s,\omega)\right\|_{\HHH}^2=0.
\end{align}
Applying hypothesis $\mathbf{(H_5)}$, we obtain the existence of $\Omega_1\in \mathcal{F}$ with $\P(\Omega_1)>0$, and of $s\in [0,T]$, such that, for every $1 \leq \ell \leq m$ and every $\omega\in \Omega_1$,
\begin{align*}
	\sum_{k=1}^{d} a^k(\omega)\sigma_{k,\ell}(X_s(\omega))=0,
\end{align*}
which contradicts hypothesis $\mathbf{(H_3)}$. Consequently, $\det \Gamma_{X_t}>0$ almost surely, and this completes the proof.
\end{proof}

\section{Application: the case of multidimensional Hermite processes}\label{sec:Hermite}

Throughout this section, $(\Omega,\mathcal{F}, \mathbb{P}, \mathfrak{H})$ denotes the classical Wiener space: $\Omega= C_0(\R,\mathbb{R}^m)$ is the space of continuous functions $\omega: \R\to \mathbb{R}^m$ with $\omega(0)=0$, equipped with the Fréchet metric; $\mathbb{P}$ is the two-sided Wiener measure, i.e.\ the unique probability measure on $\Omega$ under which the canonical process
\begin{equation}\label{eqn:multibrownian}
B=\{B_t=(B^{1}_t,\ldots,B^{m}_t),\;  t\in  \R\}
\end{equation}
is a two-sided $m$-dimensional Brownian motion; $\mathcal{F}$ is the completion of the Borel $\sigma$-algebra of $\Omega$ with respect to $\mathbb{P}$; $\mathfrak{H}=L^2(\R,\mathbb{R}^m)$; and, for all $h \in \mathfrak{H}$,
\[ j(h) \, : \,  \R\to \mathbb{R}^m \, : \, t \mapsto \left(\int_0^t h^1(s) \, ds, \dots, \int_0^t h^m(s) \, ds \right). \]
In this setting, the isonormal Gaussian process $\{X_h \, : \, h \in \mathfrak{H}\}$ is given, for every $h \in \mathfrak{H}$, by the Wiener integral
\begin{equation}\label{eqn:isodanslecashermimult}
 X_h = \sum_{\ell=1}^m \int_{\R} h^\ell(t) dB^\ell_t.
\end{equation}
In this context, the Malliavin derivative of the cylindrical random variable $F$ in \eqref{intro:defmallia} takes the form of the $m$-dimensional stochastic process $DF=\{D_rF=(D^{(1)}_rF,\ldots,D^{(m)}_rF)\,,\; r\in\R\}$, where, for every $1 \leq \ell \leq m$ and $r \in \R$,
  \[D^{(\ell)}_rF=\sum_{i=1}^{n}\frac{\partial f}{\partial x_i} (X_{h_1},\ldots,X_{h_n})h_i^\ell(r).\]
This formalization allows us to write, for every $F,G \in \mathbb{D}^{1,p}$ ($p \geq 1$),
  \[\left<DF\,,\,DG\right>_{\HHH}=\sum_{\ell=1}^{m}\int_{\mathbb{R}}D^{(\ell)}_{r}F\, D^{(\ell)}_{r}G\,dr. \]

Throughout this section, $I_q$ denotes the $q$th stochastic integral (as defined in Subsection \ref{subsec:abstractwienerspaces}) with respect to the isonormal Gaussian process \eqref{eqn:isodanslecashermimult}, i.e.\ the $q$th multiple Wiener integral with respect to the $m$-dimensional Brownian motion \eqref{eqn:multibrownian}.

Let us first consider the case $m=1$ and introduce the one-dimensional Hermite process.
\begin{Def}\label{Def:Hermite}
The one-dimensional Hermite process $Z^{H,q}=\left(Z_t^{H,q}\right)_{t \in \R_+}$ of order $q\geq 1$ and self-similarity parameter $H \in\left(\frac{1}{2}, 1\right)$ is defined by
\begin{align}\label{eq:Zt}
	Z^{H,q}_t=I_q(L_t^{H, q}) \qquad \text{where} \qquad L_t^{H, q}(\xi)=c(H, q) \int_0^t \prod_{j=1}^q\left(s-\xi_j\right)_{+}^{H_0-\frac{3}{2}} ds,
\end{align}
$$
H_0=1+\frac{H-1}{q} \in\left(1-\frac{1}{2 q}, 1\right), \quad\text{and}\quad c(H, q)=\sqrt{\frac{H(2 H-1)}{q!\,\beta^q\left(H_0-\frac{1}{2}, 2-2 H_0\right)}}.
$$
Here $\beta$ stands for the Beta function and we use the convention, for each $(\theta, \alpha) \in \R^2$,
		\[\theta^{\alpha}_+:=\left\{ \begin{array}{rl}\theta^{\alpha}, & \text{if }\theta>0,\\ 0, &  \text{otherwise}. \end{array}\right. \]
\end{Def}

The family of Hermite processes provides paradigmatic examples of chaotic random processes. It includes the Rosenblatt process, obtained for $q=2$, and the fractional Brownian motion, obtained for $q=1$; the latter is the only Gaussian element of this family. A direct computation (see e.g.\ \cite[Proposition 2.1]{tudorbook}) shows that, for every $s,t \geq 0$,
\[ \langle L_t^{H, q}, L_s^{H, q} \rangle_{L^2(\R^q,\R)}=\frac{q!}{2}\left(t^{2H}+s^{2H}-|t-s|^{2H}\right),\]
which yields, by the polarization formula,
\begin{equation}\label{eqn:boundL2kernel}
\|L_t^{H, q}-L_s^{H, q} \|_{L^2(\R^q,\R)} \leq q!\,|t-s|^{2H}.
\end{equation}
In particular, hypothesis $\mathbf{(H_2)}$ is satisfied by the one-dimensional Hermite process $Z^{H,q}$. We refer the reader to Tudor's monograph \cite{tudorbook} for a detailed presentation of this class of processes and of their properties, which include stationarity of increments and $H$-self-similarity. For our purposes, the key feature is that $(Z_t^{H,q})_{t\in\R_+}$ is adapted to the filtration defined, for $t\in\R$, by
  $$\mathcal{F}_t=\sigma\{B(\mathbbm{1}_{A})\,: A \text{ is a Borel } \lambda\text{-finite subset of } (-\infty,t]\}\vee \mathcal{N},$$
where $\mathcal{N}$ denotes the collection of $\mathbb{P}$-null subsets of $\Omega$ and $\lambda$ is the Lebesgue measure on $\R$. In view of \cite[Corollary 1.2.1]{Nualart}, we therefore have, for every $t\in\R_+$,
  \begin{equation}\label{eq:adaptfromnualart}
  D_r Z_t^{H,q} = 0 \qquad \text{for } \P\otimes\lambda\text{-a.e.\ } (\omega,r)\in \Omega\times (t,\infty).
  \end{equation}

We now turn to the definition of the multidimensional Hermite process.

\begin{Def}\label{Def:Hermite288}
	Let $m\in \N^*$, $q\geq 1$ and $(Z^{H,q,\ell} :=\{Z^{H,q,\ell}_{t}\}_{t\in \R_+})_{1 \leq \ell \leq m}$ be $m$ independent Hermite processes of order $q$ and self-similarity parameter $H$. The $\R^m$-valued stochastic process $Z^{H,q}=(Z^{H,q,1},\ldots,Z^{H,q,m})$ is called the $m$-dimensional Hermite process of order $q$ and self-similarity parameter $H$.
\end{Def}

Fix $m\in \N^*$ and $q\geq 1$, and denote by $I_q^{B^\ell}$ the $q$th multiple integral with respect to the Brownian motion $B^\ell$ (that is, the $\ell$th component in \eqref{eqn:multibrownian}), for $1 \leq \ell \leq m$. Setting
\begin{align}\label{eq:Zt1}
	Z^{H,q,\ell} =\{I^{B^\ell}_q(L_t^{H, q})\}_{t \geq 0}, \qquad \text{with } L_t^{H, q} \text{ given by } \eqref{eq:Zt},
\end{align}
the process $Z^{H,q}:=(Z^{H,q,1},\ldots,Z^{H,q,m})$ is an $m$-dimensional Hermite process of order $q$ and self-similarity parameter $H$. However, its components are built from multiple integrals with respect to \emph{distinct} isonormal Gaussian processes, so $Z^{H,q}$ does not directly fit into the framework of Definition \ref{def:drivingprocess}. Our first task in this section is therefore to provide an explicit construction realizing the components of $Z^{H,q}$ as multiple integrals with respect to a \emph{single} isonormal Gaussian process, namely \eqref{eqn:isodanslecashermimult}.

Recall that, with $\mathfrak{H}=L^2(\R,\mathbb{R}^m)$, one has for any $q \geq 1$
	\begin{align*}
		\HHH^{\otimes q} = L^2(\R^q, \R^{m^q}) \qquad \text{and} \qquad \HHH^{\odot  q} = L^2_s(\R^q, \R^{m^q}).
	\end{align*}
Given $h_q\in L^2_s(\R^q, \R)$ and $1 \leq \ell \leq m$, let $\hat{h}_{\ell,q}=(\hat{h}_{\ell,q}^{i_1,\ldots, i_q})_{1\leq i_1,\ldots, i_q \leq m}$ denote the element of $L^2_s(\R^q, \R^{m^q})$ defined by
\begin{align}\label{eq:hath299}
	\hat{h}_{j,q}^{i_1,\ldots, i_q} =
\begin{cases}
	\hat{h}_{j,q}^{i_1,\ldots, i_q}= h_q,       & \text{ if } i_1=\ldots=i_q=\ell,\\
	\hat{h}_{j,q}^{i_1,\ldots, i_q}=0,    & \text{ otherwise}.
\end{cases}
\end{align}
A direct verification shows that, if $g_1,\dots,g_q\in L^2(\R,\R)$ and
\begin{equation}\label{eqn:simplefunctionh}
h_q=g_1\odot \dots \odot g_q \in L^2_s(\R^q, \R),
\end{equation}
then, setting for any $1 \leq k \leq q$ and $1 \leq \ell \leq m$,
\begin{equation}\label{def:defofgtilde}
 \tilde{g}_{k,\ell} = (\underbrace{0,\;\ldots\;,0}_{\substack{(\ell-1)\\ \text{elements}}},\; g_k\;, \underbrace{0,\;\cdots\;,0}_{\substack{(m-\ell)\\ \text{elements}}}),
\end{equation}
one has $\tilde{g}_{1,\ell} \odot \dots \odot \tilde{g}_{q,\ell} =\hat{h}_{\ell,q}$. This identity is the cornerstone of the following proposition.

\begin{Prop}\label{Prop:HermiteMulti}
	Let $h_q\in L^2_s(\R^q, \R)$ and, for $1 \leq \ell \leq m$, let $\hat{h}_{\ell,q}\in L^2_s(\R^q, \R^{m^q})$ be as defined in \eqref{eq:hath299}. Then
	\begin{align}\label{eq:IqIq}
		I_q(\hat{h}_{\ell,q})= I_q^{B^\ell}(h_q).
	\end{align}
\end{Prop}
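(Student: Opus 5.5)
The strategy is a density argument. We first prove \eqref{eq:IqIq} for simple symmetric tensors of the form \eqref{eqn:simplefunctionh}, then extend by linearity and continuity to all of $L^2_s(\R^q,\R)$. Both sides are linear in $h_q$. The map $h_q \mapsto \hat{h}_{\ell,q}$ is an isometry from $L^2(\R^q,\R)$ into $L^2(\R^q,\R^{m^q})$, since only one coordinate is nonzero. By the isometry formula \eqref{eq:isom}, applied both to $I_q$ and to $I_q^{B^\ell}$, both sides of \eqref{eq:IqIq} are therefore continuous linear maps into $L^2(\Omega)$. Finite linear combinations of elements $g_1\odot\dots\odot g_q$ are dense in $L^2_s(\R^q,\R)$. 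So it suffices to treat $h_q=g_1\odot\dots\odot g_q$.

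For such $h_q$ we use the identity $\tilde{g}_{1,\ell}\odot\dots\odot\tilde{g}_{q,\ell}=\hat{h}_{\ell,q}$ stated just before the proposition. We then apply the Hermite factorisation \eqref{eqn:factorisationofF} (as in the proof of Theorem \ref{thm:main}) on both sides. First group the $g_k$ into distinct vectors $\tilde f_1,\dots,\tilde f_r$ with multiplicities $q_1,\dots,q_r$. One point needs care: \eqref{eqn:factorisationofF} as quoted requires the distinct factors to be orthonormal (otherwise cross-contractions appear). We will therefore first reduce, again by linearity and density, to $g_1,\dots,g_q$ taken from a fixed orthonormal basis $(\epsilon_n)$ of $L^2(\R,\R)$. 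Then
\[
I_q(\hat{h}_{\ell,q})=\prod_{i=1}^r H_{q_i}\big(X_{\tilde{\epsilon}_{i,\ell}}\big),\qquad I_q^{B^\ell}(h_q)=\prod_{i=1}^r H_{q_i}\Big(\int_\R \epsilon_{n_i}(t)\,dB^\ell_t\Big),
\]
where $\tilde{\epsilon}_{i,\ell}$ is the vector in $\mathfrak{H}$ with $\epsilon_{n_i}$ in the $\ell$th slot and zeros elsewhere. This family is orthonormal in $\mathfrak{H}=L^2(\R,\R^m)$ because $\langle\tilde{\epsilon}_{i,\ell},\tilde{\epsilon}_{j,\ell}\rangle_{\mathfrak{H}}=\langle \epsilon_{n_i},\epsilon_{n_j}\rangle_{L^2(\R)}$, so the factorisation legitimately applies on the left-hand side.

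By \eqref{eqn:isodanslecashermimult}, $X_{\tilde{\epsilon}_{i,\ell}}=\int_\R \epsilon_{n_i}(t)\,dB^\ell_t$, because all components other than the $\ell$th vanish. The two products therefore coincide, which proves \eqref{eq:IqIq} on the spanning set. The density step then concludes.

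The main obstacle is purely bookkeeping. We must check that $\tilde{g}_{1,\ell}\odot\dots\odot\tilde{g}_{q,\ell}$ really equals $\hat{h}_{\ell,q}$ componentwise under the identification $\mathfrak{H}^{\otimes q}=L^2(\R^q,\R^{m^q})$. We must also verify that the normalisation conventions of $I_q$ and $I_q^{B^\ell}$ agree, namely that both satisfy $I_q(e^{\otimes q})=H_q(X_e)$ for unit $e$. The remaining steps are immediate.
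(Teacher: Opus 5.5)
Your argument is correct, but it takes a different route from the paper's.

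The paper works directly with an arbitrary simple tensor $h_q=g_1\odot\dots\odot g_q$, with no orthonormality assumed, and argues by induction on $q$. The reintegration formula \eqref{eqn:reintegration}, the integration-by-parts rule \eqref{integrationbypart} and \eqref{eqn:derivativeofintegral} express $I_{q+1}(\tilde g_{1,\ell}\odot\dots\odot\tilde g_{q+1,\ell})$ through products $I_q(\cdots)\,I_1(\tilde g_{k,\ell})$ and correction terms $I_{q-1}(\cdots)\,\langle \tilde g_{n,\ell},\tilde g_{k,\ell}\rangle_{\HHH}$. Since $\langle \tilde g_{n,\ell},\tilde g_{k,\ell}\rangle_{\HHH}=\langle g_n,g_k\rangle_{L^2(\R)}$ and $I_1(\tilde g_{k,\ell})=I_1^{B^\ell}(g_k)$, the same recursion holds for $I^{B^\ell}$. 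The induction hypothesis then closes the argument, by ``running it backwards''.

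You instead reduce further, by density, to symmetrised tensors of a fixed orthonormal basis. You then apply the closed-form Hermite product formula on both sides, so the proof becomes the single identification $X_{\tilde\epsilon_{i,\ell}}=\int_\R \epsilon_{n_i}\,dB^\ell$ together with orthonormality of the lifted family. This avoids the induction entirely. The cost is the extra reduction to basis elements, which is harmless because such symmetrised basis tensors span a dense subspace of $L^2_s(\R^q,\R)$.

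Your caution that \eqref{eqn:factorisationofF} holds only for orthonormal families is well taken. For non-orthogonal or non-unit vectors, contraction and norm corrections appear; for instance $I_2(f\odot g)=X_fX_g-\langle f,g\rangle_{\HHH}$. The paper's recursion is exactly what lets it handle general $g_k$ without that restriction.

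You also make explicit the isometry $\|\hat h_{\ell,q}\|_{\HHH^{\otimes q}}=\|h_q\|_{L^2(\R^q)}$. This is what justifies the final density step, which the paper only mentions in passing.
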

\begin{proof}
	We first treat the case where $h_q$ has the simple form \eqref{eqn:simplefunctionh}, and argue by induction on $q$. For $q=1$, the identity $I_1(\hat{h}_{\ell,1})=I_1^{B^\ell}(h_1)$ is immediate for every $1 \leq \ell \leq m$. Assume now that \eqref{eq:IqIq} holds at order $q$, and let us establish it at order $q+1$. Using \eqref{eqn:reintegration}, we obtain, for any $1 \leq \ell \leq m$,
	\begin{align*}
		I_{q+1}(\hat{h}_{\ell,q+1}) =& I_{q+1}(\tilde{g}_{1,\ell} \odot \dots \odot \tilde{g}_{q+1,\ell} ) \\
		&= \frac{1}{q+1}  \sum_{k=1}^{q+1} \delta\left(I_q(\tilde{g}_{1,\ell} \odot \dots \odot \widehat{\tilde{g}_{k,\ell}} \odot \dots \odot \tilde{g}_{q+1,\ell}) \tilde{g}_{k,\ell}\right),
\end{align*}
where the hat over $\tilde{g}_{k,\ell}$ indicates that this factor is omitted from the symmetric tensor product. For each $1 \leq k \leq q+1$, successive applications of \eqref{integrationbypart} and \eqref{eqn:derivativeofintegral} yield
\begin{align*}
\delta &\left(I_q(\tilde{g}_{1,\ell} \odot \dots \odot \widehat{\tilde{g}_{k,\ell}} \odot \dots \odot \tilde{g}_{q+1,\ell}) \tilde{g}_{k,\ell}\right) \\
&=I_q(\tilde{g}_{1,\ell} \odot \dots \odot \widehat{\tilde{g}_{k,\ell}} \odot \dots \odot \tilde{g}_{q+1,\ell}) \delta(\tilde{g}_{k,\ell}) \\ & \qquad - \left\langle D I_q(\tilde{g}_{1,\ell} \odot \dots \odot \widehat{\tilde{g}_{k,\ell}} \odot \dots \odot \tilde{g}_{q+1,\ell}),\tilde{g}_{k,\ell} \right \rangle_\mathfrak{H} \\
&= I_q(\tilde{g}_{1,\ell} \odot \dots \odot \widehat{\tilde{g}_{k,\ell}} \odot \dots \odot \tilde{g}_{q+1,\ell}) I_1(\tilde{g}_{k,\ell})\\
& \qquad - \sum_{\substack{n=1 \\ n \neq k}}^{q+1} I_{q-1}(\tilde{g}_{1,\ell} \odot \dots \odot \widehat{\tilde{g}_{n,\ell}} \odot \dots \odot \widehat{\tilde{g}_{k,\ell}} \odot \dots \odot \tilde{g}_{q+1,\ell})\left \langle \tilde{g}_{n,\ell}  ,\tilde{g}_{k,\ell} \right \rangle_\mathfrak{H}
\end{align*}
Combining the induction hypothesis with the definition \eqref{def:defofgtilde}, we get
\begin{align*}
	I_{q+1}(\hat{h}_{\ell,q+1}) &= \frac{1}{q+1}  \sum_{k=1}^{q+1} \left( I_q^{ B^\ell} (\tilde{g}_{1} \odot \dots \odot \widehat{g_k} \odot \dots \odot g_{q+1}) I_1^{B^\ell}(g_k) \right. \\
	& - \sum_{\substack{n=1 \\ n \neq k}}^{q+1} \left.  I_{q-1}^{B^\ell}(g_1 \odot \dots \odot \widehat{g_n} \odot \dots \odot \widehat{g_k} \odot \dots \odot g_{q+1})\left \langle g_n  ,g_k \right \rangle_{L^2(\R,\R)} \right) .
	\end{align*}
	Running the same argument backwards, we deduce the claimed identity
	\[ I_{q+1}(\hat{h}_{\ell,q+1})=I_{q+1}^{B^\ell}(h_q).\]
The general case then follows by a density argument.
\end{proof}

As a consequence of Proposition \ref{Prop:HermiteMulti}, we can now give a definition of the $m$-dimensional Hermite process $(Z^{H,q,1},\ldots,Z^{H,q,m})$ that relies on stochastic integration with respect to a single isonormal Gaussian process. Indeed, it suffices to set, for any $t \geq 0$ and $1 \leq \ell \leq m$,
\begin{align}\label{eqn:defofmultihermitekernel}
		Z^{H,q,\ell}_t = I_q(\hat{L}_{j,t}^{H,q}),\
		\end{align} 
		where
		\[	
		 \hat{L}_{\ell,t}^{H,q} =
\begin{cases}
	(\hat{L}_{\ell,t}^{H,q})^{i_1,\ldots, i_q}= L_t^{H,q},       & \text{ if } i_1=\ldots=i_q=\ell\\
	(\hat{L}_{\ell,t}^{H,q})^{i_1,\ldots, i_q}=0,    & \text{ otherwise}
\end{cases}
	\]
with $L_t^{H,q}$ as in equation \eqref{eq:Zt}.

We now verify that, with this definition, the $m$-dimensional Hermite process $(Z^{H,q,1},\ldots,Z^{H,q,m})$ fits the framework of the present paper: it satisfies hypotheses $\mathbf{(H_2)}$, $\mathbf{(H_4)}$ and $\mathbf{(H_5)}$, so that Theorems \ref{thm:MallDiff}, \ref{thm:RepresentationDXt} and \ref{thm:AbsoluteContinuity} apply to stochastic differential equations driven by multidimensional Hermite processes.

We first observe, combining \eqref{eqn:defofmultihermitekernel} with \eqref{eqn:boundL2kernel}, that for any $1 \leq \ell \leq m$ and $s,t \geq 0$,
 \[\|\hat{L}_{\ell,t}^{H,q}-\hat{L}_{\ell,t}^{H,q} \|_{\HHH^{\otimes q}}= \|L_t^{H, q}-L_s^{H, q} \|_{L^2(\R^q,\R)} \leq q!\,|t-s|^{2H}. \]
Hence hypothesis $\mathbf{(H_2)}$ holds for the $m$-dimensional Hermite process $$(Z^{H,q,1},\ldots,Z^{H,q,m})$$ defined in \eqref{eqn:defofmultihermitekernel}. That $\mathbf{(H_4)}$ is also satisfied follows from the next general proposition.

\begin{Prop}\label{prop:MalIndepHermite}
Let $h_q\in L^2_s(\R^q, \R)$ and, for $1 \leq \ell \leq m$, let $\hat{h}_{\ell,q}\in L^2_s(\R^q, \R^{m^q})$ be as defined in \eqref{eq:hath299}. Then the components of the chaotic process $$(I_q(\hat{h}_{1,q}),\dots,I_q(\hat{h}_{m,q}))$$ are Malliavin independent.
\end{Prop}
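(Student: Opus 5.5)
We must show that, for $\ell\neq\ell'$, $\langle D I_q(\hat h_{\ell,q}), D I_q(\hat h_{\ell',q})\rangle_{\HHH}=0$ almost surely. The approach is to compute both derivatives explicitly with \eqref{eqn:derivativeofintegral} and then use the fact that the two kernels are supported on disjoint coordinate blocks of $\R^{m^q}$.

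First, \eqref{eqn:derivativeofintegral} gives $D I_q(\hat h_{\ell,q}) = q\, I_{q-1}(\hat h_{\ell,q})$, an $\HHH$-valued random variable, where the integral acts on the first $q-1$ variables. In the componentwise description of this section, the $\HHH=L^2(\R,\R^m)$-valued derivative has components $D^{(i)}_r$ for $1\le i\le m$. Only the slice in which the last index is $i_q=i$ is involved, so
\[
D^{(i)}_r I_q(\hat h_{\ell,q}) = q\, I_{q-1}\big((\hat h_{\ell,q})^{\cdot,\ldots,\cdot,i}(\cdot,r)\big).
\]
By \eqref{eq:hath299}, the slice kernel vanishes identically unless $i=\ell$. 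When $i=\ell$ it equals $\hat{(h_q(\cdot,r))}_{\ell,q-1}$, the embedding of $h_q(\cdot,r)\in L^2_s(\R^{q-1},\R)$ into the $\ell$-th diagonal block. Hence $D^{(i)}_r I_q(\hat h_{\ell,q})=0$ for $i\neq\ell$, for a.e.\ $r$, almost surely.

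To keep this rigorous, I would first verify it for $h_q$ of the simple form \eqref{eqn:simplefunctionh}. There $I_q(\hat h_{\ell,q}) = I_q(\tilde g_{1,\ell}\odot\cdots\odot\tilde g_{q,\ell})$ is, via the product formula or \eqref{eqn:factorisationofF}, a polynomial in the Gaussian variables $X_{\tilde g_{k,\ell}}$. Its derivative, computed with \eqref{eqn:defmalderi}, is therefore a random linear combination of the vectors $\tilde g_{k,\ell}$, each of which has all components zero except the $\ell$-th by \eqref{def:defofgtilde}. The identity extends by linearity to finite linear combinations of simple tensors. For general $h_q$ it follows from density and closability: approximate in $L^2_s(\R^q,\R)$, so that $D I_q(\hat h^n_{\ell,q})\to D I_q(\hat h_{\ell,q})$ in $L^2(\Omega,\HHH)$. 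The property ``the $i$-th component vanishes for $i\neq\ell$'' defines a closed subspace of $L^2(\Omega,\HHH)$, so it is preserved in the limit.

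Finally, for $\ell\neq\ell'$,
\[
\langle D I_q(\hat h_{\ell,q}), D I_q(\hat h_{\ell',q})\rangle_\HHH=\sum_{i=1}^m\int_\R D^{(i)}_r I_q(\hat h_{\ell,q})\,D^{(i)}_r I_q(\hat h_{\ell',q})\,dr .
\]
Every summand vanishes, because for each $i$ at least one factor is zero, as $i$ cannot equal both $\ell$ and $\ell'$. This gives Malliavin independence, and for the Hermite process it applies directly to $\hat L^{H,q}_{\ell,t}$ at every $t$. No step is a real obstacle. The only point needing care is the density passage and the bookkeeping of which tensor coordinate carries the derivative, which is why I would work with simple tensors first, where everything is explicit.
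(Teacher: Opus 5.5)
Your proposal is correct and follows essentially the paper's argument: for simple tensors, $D I_q(\hat h_{\ell,q})$ is a random linear combination of the vectors $\tilde g_{k,\ell}$, which live only in the $\ell$-th coordinate, and the general case follows by density. Recasting the claim as membership in the closed subspace of $L^2(\Omega,\HHH)$ of elements supported on the $\ell$-th component is a cleaner way to handle both the cross terms from linear combinations and the limiting step, which the paper compresses into ``by density''.
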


\begin{proof}
We first assume that $h_q$ has the simple form \eqref{eqn:simplefunctionh}. For any $1 \leq \ell \leq m$, applying \eqref{eqn:derivativeofintegral} yields
\[ 	D I_{q}(\hat{h}_{\ell,q}) = D I_{q}(\tilde{g}_{1,\ell} \odot \dots \odot \tilde{g}_{q,\ell}) = \sum_{k=1}^{q} I_{q-1}(\tilde{g}_{1,\ell} \odot \dots \odot \widehat{\tilde{g}_{k,\ell}} \odot \dots \odot \tilde{g}_{q,\ell}) \tilde{g}_{k,\ell},\]
with $\tilde{g}_{k,\ell}$ given by \eqref{def:defofgtilde}. From this definition, it follows immediately that for any $1 \leq \ell\neq \ell' \leq m$,
\[	\left<D I_{q}(\hat{h}_{\ell,q}),D I_{q}(\hat{h}_{\ell',q})\right>_{\HHH} = 0. \]
The general case is then obtained by density.
\end{proof}

It remains to show that the $m$-dimensional Hermite process defined in \eqref{eqn:defofmultihermitekernel} also satisfies hypothesis $\mathbf{(H_5)}$. To this end, we rely on the following lemma.

\begin{Lemma}\label{Prop:Zt9999}
	Let $Z^{H,q}=\{Z^{H,q}_t \}_{t\in \R_+}$ be the one-dimensional Hermite process of order $q\geq 1$ and self-similarity parameter $H\in(\frac{1}{2},1)$, as in Definition \ref{Def:Hermite}. Then, for every $t>0$ and $\varepsilon\in (0,t)$, the following equality in distribution holds:
		\begin{align}
			\int_{t-\varepsilon}^{t}|D_rZ^{H,q}_{t}-D_rZ^{H,q}_{t-\varepsilon}|^2\,dr \overset{\text{law}}{=} \varepsilon^{2H}\int_{0}^{1}|D_rZ^{H,q}_{1}|^2\,dr.
		\end{align}
	\end{Lemma}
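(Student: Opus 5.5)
The proof rests on two structural facts about the Hermite process: stationarity of increments, and $H$-self-similarity lifted to the level of the underlying Brownian motion. We work throughout in the one-dimensional setting $\mathfrak{H}=L^2(\R)$. By \eqref{eqn:derivativeofintegral}, for $r\in\R$ we have $D_r Z^{H,q}_t = q\, I_{q-1}(L^{H,q}_t(\cdot,r))$. We therefore express
\[
\int_{t-\varepsilon}^{t}|D_rZ^{H,q}_{t}-D_rZ^{H,q}_{t-\varepsilon}|^2\,dr = q^2\int_{t-\varepsilon}^{t}\bigl|I_{q-1}\bigl(L^{H,q}_t(\cdot,r)-L^{H,q}_{t-\varepsilon}(\cdot,r)\bigr)\bigr|^2\,dr
\]
as a functional of the Brownian motion $B$. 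From \eqref{eq:Zt}, the kernel difference is
\[
c(H,q)\int_{t-\varepsilon}^{t}\prod_{j=1}^{q}(s-\xi_j)_+^{H_0-\frac32}\,ds .
\]

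The first step is a time shift. We substitute $s=s'+t-\varepsilon$, $\xi_j=\xi_j'+t-\varepsilon$ and $r=r'+t-\varepsilon$. The kernel becomes $L^{H,q}_\varepsilon(\xi',r')$ and the $dr$-integral runs over $(0,\varepsilon)$. Since the shifted process $\tilde B_u:=B_{u+t-\varepsilon}-B_{t-\varepsilon}$ is again a two-sided Brownian motion, the multiple integrals transform accordingly. Concretely, consider the random field
\[
\bigl\{I_{q-1}\bigl(L^{H,q}_t(\cdot,r)-L^{H,q}_{t-\varepsilon}(\cdot,r)\bigr)\bigr\}_{r\in(t-\varepsilon,t)} .
\]
We check that it has the same law as $\{I_{q-1}(L^{H,q}_\varepsilon(\cdot,r'))\}_{r'\in(0,\varepsilon)}$. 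This check uses two facts. First, multiple integrals of translated kernels with respect to $B$ equal multiple integrals of the original kernels with respect to $\tilde B$; this is verified on simple tensors and then extended by density, as in Proposition~\ref{Prop:HermiteMulti}. Second, a measurable jointly defined version of the field is needed, which can be obtained, for example, through the chaos expansion along an orthonormal basis. Since the map from the field to the $dr$-integral is measurable, the shift gives
\[
\int_{t-\varepsilon}^{t}|D_rZ^{H,q}_{t}-D_rZ^{H,q}_{t-\varepsilon}|^2\,dr \overset{\text{law}}{=} \int_0^\varepsilon |D_r Z^{H,q}_\varepsilon|^2\,dr .
\]

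The second step is scaling. Substituting $s=\varepsilon s'$ and $\xi_j=\varepsilon\xi_j'$ gives
\[
L^{H,q}_\varepsilon(\varepsilon\xi',\varepsilon r') = \varepsilon^{1+q(H_0-\frac32)}L^{H,q}_1(\xi',r') .
\]
We use that $B_{\varepsilon\cdot}\overset{\text{law}}{=}\varepsilon^{1/2}B$, so that, as a field in $r$,
\[
I_{q-1}(g(\varepsilon\,\cdot)) \overset{\text{law}}{=} \varepsilon^{-(q-1)/2}\, I_{q-1}(g) .
\]
Combining this with $dr=\varepsilon\,dr'$, the total exponent of $\varepsilon$ in the squared integral is
\[
2\Bigl(1+q\bigl(H_0-\tfrac32\bigr)\Bigr)-(q-1)+1 .
\]
Since $q(H_0-1)=H-1$, we have $q(H_0-\tfrac32)=H-1-\tfrac q2$, and the exponent simplifies to
\[
2+2H-2-q-q+1+1 = 2H .
\]
This yields
\[
\int_0^\varepsilon |D_r Z^{H,q}_\varepsilon|^2\,dr \overset{\text{law}}{=} \varepsilon^{2H}\int_0^1|D_rZ^{H,q}_1|^2\,dr ,
\]
with the integration range on the right being $(0,1)$ as required.

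The main obstacle is making the equalities in law rigorous at the level of the whole random field $r\mapsto D_rZ$, rather than for fixed $r$. The integral depends on the joint law in $r$. Moreover, $D_rZ$ is only defined $dr\otimes d\P$-almost everywhere, and the transformations act simultaneously on $r$ and on the noise. The cleanest route is to view the integral as $\|DZ\cdot\mathbbm 1_A\|^2_{L^2(\R)}$ and to realise both the time shift and the scaling as measure-preserving (up to the stated scalar factor) isometries $U$ of $L^2(\R)$. Each such isometry induces a transformation $\Theta_U$ of Wiener space with $I_p(f)\circ\Theta_U=I_p(U^{\otimes p}f)$. The Malliavin derivative then satisfies $D(F\circ\Theta_U)=U^{-1}$-transported $(DF)\circ\Theta_U$, which is a standard consequence of the intertwining property on cylindrical functionals, extended by closure. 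This yields the identity in law without having to choose pointwise versions of the field.
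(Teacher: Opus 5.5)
Your route is the paper's: a shift by $h=t-\varepsilon$ through the shifted Brownian motion $\tilde B^{(h)}$, which reduces the left-hand side to $\int_0^\varepsilon|D_rZ^{H,q}_\varepsilon|^2\,dr$, followed by a dilation by $\varepsilon$ and Brownian scaling. Your last paragraph goes further than the paper. You realise both transformations through isometries $U$ of $L^2(\R)$ and the measure-preserving maps of Wiener space they induce. This justifies the equality in law of the whole field $r\mapsto D_rZ$. The paper only substitutes formally inside $\int_{\R^{q-1}}\prod dB_{\xi_i}$ and takes that field-level step for granted.

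There is, however, a concrete error in your scaling step. After substituting $r=\varepsilon r'$, the kernel you must integrate is $\xi\mapsto L^{H,q}_\varepsilon(\xi,\varepsilon r')=\varepsilon^{a}L^{H,q}_1(\xi/\varepsilon,r')$ with $a=1+q(H_0-\frac32)$. This is a dilation by $1/\varepsilon$, not by $\varepsilon$. Your identity $I_{q-1}(g(\varepsilon\,\cdot))\overset{\text{law}}{=}\varepsilon^{-(q-1)/2}I_{q-1}(g)$ must therefore be applied with $\varepsilon^{-1}$ in place of $\varepsilon$, which gives a factor $\varepsilon^{+(q-1)/2}$.

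As written, your exponent $2(1+q(H_0-\frac32))-(q-1)+1$ equals $2H-2q+2$. The line $2+2H-2-q-q+1+1=2H$ is false for $q\geq 2$, so an arithmetic slip is hiding the sign error. The correct count is $(2H-q)+(q-1)+1=2H$.

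The cleanest fix is in the spirit of your own last paragraph. Write $L^{H,q}_\varepsilon=\varepsilon^{H}U_\varepsilon^{\otimes q}L^{H,q}_1$ with the unitary $U_\varepsilon f=\varepsilon^{-1/2}f(\cdot/\varepsilon)$. Then $D_rZ^{H,q}_\varepsilon=\varepsilon^{H-\frac12}\,q\,I_{q-1}\bigl(U_\varepsilon^{\otimes(q-1)}L^{H,q}_1(\cdot,r/\varepsilon)\bigr)$. Hence $\{D_rZ^{H,q}_\varepsilon\}_r\overset{\text{law}}{=}\{\varepsilon^{H-\frac12}D_{r/\varepsilon}Z^{H,q}_1\}_r$, and the factor $\varepsilon^{2H}$ follows directly with no exponent bookkeeping.
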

	\begin{proof}
	Fix $t>0$ and $\varepsilon\in(0,t)$, and set $h=t-\varepsilon$. We have
	\begin{align}\label{eq:Iq1}
		\int_{t-\varepsilon}^{t} dx|D_{x}Z^{H,q}_{t}-D_{x}Z^{H,q}_{t-\varepsilon}|^2 &= 	\int_{h}^{\varepsilon +h} d{x}|D_{x}Z^{H,q}_{h + \varepsilon }-D_{x}Z^{H,q}_{h}|^2 \nonumber\\ 
		& = q^2	\,  \int_{h}^{\varepsilon +h} d{x} \left|I_{q-1}\left(L_{h,h+\varepsilon}^{H,q}({x} , \star) \right)\right|^2\nonumber\\
		& =q^2	\,  \int_{0}^{\varepsilon } dr \left|I_{q-1}\left(L_{h,h+\varepsilon}^{H,q}(r + h, \star) \right)\right|^2,
	\end{align}
	where the last equality follows from the change of variable $r=x - h$. Moreover, $L_{h,h+\varepsilon}^{H,q}(r+h,\star)$ can be written as
	\begin{align}\label{eq:Lhhepsilon}
		L_{h,h+ \varepsilon}^{H,q}(r + h, \star) &= c(H, q)\,\int_{h}^{h + \varepsilon}d s\left(\left(s-r-h\right)_{+}^{\frac{H-1}{q}-\frac{1}{2}}\prod_{j=1}^{q-1}\left(s-\star_j\right)_{+}^{\frac{H-1}{q}-\frac{1}{2}}\right)\nonumber\\
		&= c(H, q)\, \int_{0}^{\varepsilon}d \theta\left(\left(\theta - r\right)_{+}^{\frac{H-1}{q}-\frac{1}{2}}\prod_{j=1}^{q-1}\left(\theta + h -\star_j\right)_{+}^{\frac{H-1}{q}-\frac{1}{2}}\right),
	\end{align}
	where the last equality was obtained by the change of variable $\theta=s - h$, and $c(H, q)$ is the constant defined in Definition \ref{Def:Hermite}. Combining \eqref{eq:Iq1} and \eqref{eq:Lhhepsilon}, the right-hand side of \eqref{eq:Iq1} becomes
	\begin{align}\label{eq:ZZZt}
		%\int_{t-\varepsilon}^{t} dr|D_rZ^{H,q}_{t}-D_rZ^{H,q}_{t-\varepsilon}|^2 &= 	\int_{t-\varepsilon}^{t} dr|D_rZ^{H,q}_{(t-\varepsilon) + \varepsilon}-D_rZ^{H,q}_{t-\varepsilon}|^2 \\
		%& = q^2	\,  \int_{t-\varepsilon}^{t} dr \left|I_{q-1}\left(L_{t-\varepsilon,t}^{H,q}(r , \star) \right)\right|^2\\
		& q^2	c^2(H,q)\,\int_{0}^{\varepsilon} dr \left|\int_{\mathbb{R}^{q-1}} \prod_{i=1}^{q-1} d B_{\xi_i} \int_{0}^{\varepsilon}d \theta\left(\left(\theta - r\right)_{+}^{\frac{H-1}{q}-\frac{1}{2}}\prod_{j=1}^{q-1}\left(\theta + h -\xi_j\right)_{+}^{\frac{H-1}{q}-\frac{1}{2}}\right)\right|^2\nonumber\\
		&= q^2 c^2(H, q) \, \int_{0}^{\varepsilon}dr\,\left|\int_{\mathbb{R}^{q-1}} \prod_{i=1}^{q-1}d \tilde{B}_{\xi_i}^{(h)} \int_{0}^{\varepsilon}d \theta\left(\left(\theta - r\right)_{+}^{\frac{H-1}{q}-\frac{1}{2}}\prod_{j=1}^{q-1}\left(\theta -\xi_j\right)_{+}^{\frac{H-1}{q}-\frac{1}{2}}\right)\right|^2 \nonumber\\
	\end{align}
	where $\tilde{B}^{(h)}=\{B_{t+h}-B_h\,,\;t\in \R\}$. Since $\tilde{B}^{(h)}$ is itself a two-sided Brownian motion, the right-hand side of \eqref{eq:ZZZt} is equal in distribution to
	\begin{align}\label{eq:ZZZt2}
		&q^2 c^2(H, q) \, \int_{0}^{\varepsilon}dr\,\left|\int_{\mathbb{R}^{q-1}} \prod_{i=1}^{q-1}d {B}_{\xi_i} \int_{0}^{\varepsilon}d \theta\left(\left(\theta - r\right)_{+}^{\frac{H-1}{q}-\frac{1}{2}}\prod_{j=1}^{q-1}\left(\theta -\xi_j\right)_{+}^{\frac{H-1}{q}-\frac{1}{2}}\right)\right|^2
	\end{align}
	Applying the changes of variables $r=\varepsilon s$ and $\theta = \varepsilon\, \tau$, \eqref{eq:ZZZt2} rewrites as

	\begin{align}\label{eq:ZZZt3}
		&q^2 c^2(H, q) \, \varepsilon^{\frac{2(H-1)}{q}+ 2}\, \int_{0}^{1}ds\, \nonumber\\
		&\hskip1cm\left|\int_{\mathbb{R}^{q-1}} \prod_{i=1}^{q-1}d {B}_{\xi_i} \int_{0}^{1}d \tau\left(\left(\tau - s\right)_{+}^{\frac{H-1}{q}-\frac{1}{2}}\prod_{j=1}^{q-1}\left(\varepsilon\, \tau -\xi_j\right)_{+}^{\frac{H-1}{q}-\frac{1}{2}}\right)\right|^2 \nonumber\\
		&=q^2 c^2(H, q) \, \varepsilon^{\frac{2(H-1)}{q}+ 2}\, \int_{0}^{1}ds \nonumber\\
		&\hskip1cm \left|\int_{\mathbb{R}^{q-1}} \prod_{i=1}^{q-1}d {B}_{\varepsilon \,\xi_i} \int_{0}^{1}d \tau\left(\left(\tau - s\right)_{+}^{\frac{H-1}{q}-\frac{1}{2}}\prod_{j=1}^{q-1}\left(\varepsilon\, \tau -\varepsilon\,\xi_j\right)_{+}^{\frac{H-1}{q}-\frac{1}{2}}\right)\right|^2 \nonumber\\
		&=q^2 c^2(H, q) \, \varepsilon^{2H -(q-1)}\, \int_{0}^{1}ds \nonumber\\
		&\hskip1cm \left|\int_{\mathbb{R}^{q-1}} \prod_{i=1}^{q-1}d {B}_{\varepsilon \,\xi_i} \int_{0}^{1}d \tau\left(\left(\tau - s\right)_{+}^{\frac{H-1}{q}-\frac{1}{2}}\prod_{j=1}^{q-1}\left(\tau -\xi_j\right)_{+}^{\frac{H-1}{q}-\frac{1}{2}}\right)\right|^2 
	\end{align}
	Finally, by the scaling property of Brownian motion, the right-hand side of \eqref{eq:ZZZt3} is equal in distribution to
	\begin{align*}
		& \varepsilon^{2H}\, q^2 c^2(H, q) \,  \int_{0}^{1}ds \nonumber\\
		&\hskip1cm \left|\int_{\mathbb{R}^{q-1}} \prod_{i=1}^{q-1}d {B}_{\xi_i} \int_{0}^{1}d \tau\left(\left(\tau - s\right)_{+}^{\frac{H-1}{q}-\frac{1}{2}}\prod_{j=1}^{q-1}\left(\tau -\xi_j\right)_{+}^{\frac{H-1}{q}-\frac{1}{2}}\right)\right|^2 \nonumber\\
		&= \varepsilon^{2H}\, \int_{0}^{1} dr|D_rZ^{H,q}_{1}|^2.
	\end{align*}
	This completes the proof.
\end{proof}

\begin{Thm}\label{thm:HHermite}
	Let $Z^{H,q}=(Z^{H,q,1},\ldots,Z^{H,q,m})$ be the $m$-dimensional Hermite process of order $q\geq 1$ and self-similarity parameter $H\in \left(\frac{1}{2},1\right)$ defined in \eqref{eqn:defofmultihermitekernel}. Then $Z^{H,q}$ satisfies hypothesis $\mathbf{(H_5)}$.
\end{Thm}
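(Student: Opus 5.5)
We argue by contraposition, following the strategy of \cite[Theorem 8]{nualart2009malliavin} for fBm: we show that if $\int_0^t Y_s^\ell\, dDZ^{H,q,\ell}(s)=0$ in $\HHH$ for all $\omega$ in a set $\Omega_0$ of positive probability, then $Y$ vanishes at $s=t$ on a subset of positive probability. Fix $t\in(0,T]$ and a $\gamma$-H\"older process $Y$ with $\gamma>\frac12$. Since $Y$ has H\"older exponent $\gamma>\frac12$ and $s\mapsto DZ^{H,q,\ell}_s$ is $(H-\beta)$-H\"older in $\HHH$ with $\gamma+H-\beta>1$, the Young integral is well defined and the sewing estimate of Theorem \ref{thm:RepresentationDXt} applies. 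For $\varepsilon\in(0,t)$, we decompose
\[
0=\int_0^t Y^\ell_s\,dDZ^{\ell}_s=\int_0^{t-\varepsilon} Y^\ell_s\,dDZ^{\ell}_s+Y^\ell_{t-\varepsilon}\big(DZ^\ell_t-DZ^\ell_{t-\varepsilon}\big)+R_\varepsilon,
\]
where the sewing bound gives $\|R_\varepsilon\|_{\HHH}\le C_\omega\,\varepsilon^{\gamma+H-\beta}$.

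Next we project onto the coordinates $r\in(t-\varepsilon,t]$ of the $\ell$-th component of $\HHH=L^2(\R,\R^m)$. By adaptedness \eqref{eq:adaptfromnualart}, $D_rZ^\ell_s=0$ for $r>s$. The first integral involves only $DZ_s$ with $s\le t-\varepsilon$, so its restriction to $r\in(t-\varepsilon,t]$ vanishes; this can be justified through the Riemann sums. We also have $D_rZ^\ell_{t-\varepsilon}=0$ there. Taking the $L^2((t-\varepsilon,t])$ norm of the $\ell$-th component therefore gives, on $\Omega_0$,
\[
|Y^\ell_{t-\varepsilon}|\,\Big(\int_{t-\varepsilon}^t |D^{(\ell)}_rZ^\ell_t-D^{(\ell)}_rZ^\ell_{t-\varepsilon}|^2dr\Big)^{1/2}\le C_\omega\,\varepsilon^{\gamma+H-\beta}.
\]
Set $A_\varepsilon:=\varepsilon^{-2H}\int_{t-\varepsilon}^t|D^{(\ell)}_r(Z^\ell_t-Z^\ell_{t-\varepsilon})|^2dr$. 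This yields $|Y^\ell_{t-\varepsilon}|\,A_\varepsilon^{1/2}\le C_\omega\varepsilon^{\gamma-\beta}$. We choose $\beta<\gamma-\frac12$, hence $\beta<\gamma$, so the right-hand side tends to $0$.

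The main obstacle is to get a lower bound on $A_\varepsilon$ that survives the limit along a sequence on a set of positive probability. By Lemma \ref{Prop:Zt9999} and Proposition \ref{Prop:HermiteMulti}, $A_\varepsilon$ has the law of $A:=\int_0^1|D_rZ_1^{H,q}|^2dr$ for every $\varepsilon$. Moreover $A>0$ a.s.: it is a polynomial functional in a finite chaos that is not identically zero, since $\E[A]=q\,q!\|L_1\|^2>0$, and a nonzero element of a finite sum of chaoses vanishes only on a null set (Shigekawa). Choose $\delta>0$ with $\P(A\ge\delta)\ge 1-\P(\Omega_0)/2$. Then along $\varepsilon_n\downarrow0$ we have $\P(\Omega_0\cap\{A_{\varepsilon_n}\ge\delta\})\ge\P(\Omega_0)/2$. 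By reverse Fatou, the set $\Omega_1:=\Omega_0\cap\limsup_n\{A_{\varepsilon_n}\ge\delta\}$ has probability at least $\P(\Omega_0)/2>0$. For $\omega\in\Omega_1$ we get $|Y^\ell_{t-\varepsilon_n}(\omega)|\le C_\omega\delta^{-1/2}\varepsilon_n^{\gamma-\beta}$ infinitely often, and continuity of $Y$ gives $Y^\ell_t(\omega)=0$.

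Doing this for every $\ell$ simultaneously, by intersecting the limsup events or choosing $\delta$ with a union bound over $\ell$, yields $Y_t=0$ on $\Omega_1$ with $s=t$. This is exactly the conclusion of $\mathbf{(H_5)}$. Care is needed in the remaining details: making $C_\omega$ finite a.s. via the H\"older norms, and checking that the Riemann-sum restriction commutes with the projection.
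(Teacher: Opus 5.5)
Your proposal is correct and follows essentially the paper's proof. You split at $t-\varepsilon$, use adaptedness \eqref{eq:adaptfromnualart} to kill the $[0,t-\varepsilon]$ part on $(t-\varepsilon,t]$, bound the remainder with the sewing lemma, and combine Lemma \ref{Prop:Zt9999} with a sequence $\varepsilon_n\downarrow 0$ and continuity of $Y$ to get $Y_t=0$. The only differences are that your reverse-Fatou step replaces the paper's a.s.\ convergent subsequence, and your Shigekawa argument makes explicit the positivity of $\int_0^1|D_rZ_1|^2dr$, which the paper uses tacitly.

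One small slip: $q\,q!\|L_1\|^2$ is the expectation of $\int_{\R}|D_rZ_1|^2dr$, whereas $\E[A]=q\,q!\int_0^1\|L^{H,q}_1(r,\cdot)\|^2_{L^2(\R^{q-1})}dr$; this is still strictly positive, so nothing changes.
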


\begin{proof}
Let $t\in (0,T]$ and let $Y=(Y_r)_{r\in [0,t]}$ be a real-valued stochastic process with $\gamma$-Hölder continuous trajectories, for some $\gamma>\frac{1}{2}$. It suffices to show that, given a one-dimensional Hermite process $Z=\{Z_s\}_{s\in \R_+}$ of order $q\geq 1$ and self-similarity parameter $H\in (\frac{1}{2},1)$, the existence of some $\Omega_0\in \mathcal{F}$ with $\P(\Omega_0)>0$ satisfying
	\begin{align}\label{eq:ZeroNorm1}
		\left\|\int_{0}^{t} Y_s(\omega)dDZ_s(\omega)\right\|_{L^2(\R),\R}=0 \qquad \text{for every } \omega\in\Omega_0
	\end{align}
implies the existence of $\Omega_1\in \mathcal{F}$ with $\P(\Omega_1)>0$ such that
	\begin{align}\label{eq:ClaimY}
		Y_t(\omega)=0 \qquad \text{for every } \omega\in \Omega_1.
	\end{align}
	Fix $\varepsilon\in (0,t)$ and set $h=t-\varepsilon$. By \eqref{eq:ZeroNorm1}, we have, for every $\omega\in\Omega_0$,
	\begin{align}\label{eq:ZeroNormst}
		0=\left\|\int_{0}^{h}  Y_s(\omega) dDZ_{s}(\omega) + \int_{h}^{\varepsilon + h}  Y_s(\omega) dDZ_{s}(\omega)\right\|_{L^2([h,h+\varepsilon], \R)}^2.
	\end{align}
	Observe now that, by \eqref{eq:adaptfromnualart}, for any $\varepsilon\in (0,t)$ and $s\in [0,h]$ one has $D_rZ_s(\omega)=0$ for $\lambda\otimes \P$-a.e.\ $(r,\omega)\in[h,h+\varepsilon]\times\Omega$. Consequently, there exists $\Omega'_0\in \mathcal{F}$ with $\P(\Omega'_0)>0$ such that, for every $\varepsilon\in (0,t)$ and $\omega\in\Omega'_0$,
	\begin{align}\label{eq:ZeroNorms}
		\int_{0}^{h}  Y_s(\omega) dDZ_{s}(\omega)=0 \qquad \text{in } L^2([h,h+\varepsilon], \R).
	\end{align}
	Combining \eqref{eq:ZeroNormst} and \eqref{eq:ZeroNorms}, we obtain, for every $\varepsilon\in(0,t)$ and $\omega\in\Omega'_0$,
	\begin{align}\label{eq:ZeroNorms2}
		0&=\left\|\int_{h}^{\varepsilon+ h}  Y_s(\omega) dDZ_{s}(\omega)\right\|_{L^2([h,h+\varepsilon], \R)}^2.
	\end{align}
	Applying the Sewing lemma \cite[Proposition 2.1]{li2022mild} together with \eqref{eq:ZeroNorms2} yields, for every $\varepsilon\in(0,t)$ and $\omega\in\Omega'_0$,
	\begin{align*}
		&\left\|Y_h(\omega) \big(DZ_{\varepsilon+h}(\omega)-DZ_{h}(\omega)\big)\right\|_{L^2([h,h+\varepsilon], \R)} \leq C_{\omega}\varepsilon^{\gamma + H -\beta},
	\end{align*}
where $\beta\in (0,H\wedge \alpha)$. We thereby infer that, for every $\varepsilon\in(0,t)$ and $\omega\in\Omega'_0$,
	\begin{align}\label{eq:YY}
		|Y_h(\omega)| &\leq \frac{C_{\omega}\,\varepsilon^{\gamma + H -\beta}}{\left\| DZ_{\varepsilon+h}(\omega)-DZ_{h}(\omega)\right\|_{L^2([h,h+\varepsilon], \R)}}.
	\end{align}
	By Lemma \ref{Prop:Zt9999}, we have, for every $\varepsilon\in (0,t)$,
	\begin{align}\label{eq:LawDZ}
		\frac{\varepsilon^{\gamma + H -\beta}}{\left\| DZ_{\varepsilon+h}-DZ_{h}\right\|_{L^2([h,h+\varepsilon], \R)}}\overset{\text{law}}{=}	\frac{\varepsilon^{\gamma -\beta}}{\left\| DZ_{1}\right\|_{L^2([0,1])}}.
	\end{align}
	Since $\gamma - \beta>0$, the right-hand side of \eqref{eq:LawDZ} converges in probability to $0$ as $\varepsilon\to 0^+$, and therefore so does the left-hand side. Hence, there exists a sequence $(\varepsilon_n)_{n\geq 1}$ tending to $0^+$ such that, for almost every $\omega\in \Omega$,
	\begin{align}\label{eq:LimitEpsilon}
		\lim_{n\to \infty}\frac{\varepsilon_n^{\gamma + H -\beta}}{\left\| DZ_{t}(\omega)-DZ_{t-\varepsilon_n}(\omega)\right\|_{L^2([t-\varepsilon_n,t], \R)}}=0.
	\end{align}
	Combining \eqref{eq:YY} and \eqref{eq:LimitEpsilon}, we conclude that there exists $\Omega_1\in \mathcal{F}$ with $\P(\Omega_1)>0$ such that $Y_t(\omega)=0$ for almost every $\omega\in \Omega_1$, which proves \eqref{eq:ClaimY}.
\end{proof}

\appendix

\section{Proof of Lemma \ref{indepnt}} \label{appendix:proofofLemma}
Let $(\mathfrak H,\langle\cdot,\cdot\rangle_{\mathfrak H})$ be a real separable Hilbert space, and fix $e_0\in\mathfrak H$ with $\|e_0\|_{\HHH}=1$. Let $(e_n)_{n\geq 1}$ be an orthonormal basis of the orthogonal complement of $\operatorname{span}\{e_0\}$ in $\mathfrak H$, so that $(e_n)_{n\geq 0}$ is an orthonormal basis of $\HHH$. Every $f\in \HHH^{\odot  q}$ then admits the expansion
\begin{equation}\label{eq:fg-expansion}
f = \sum_{j_1,\ldots,j_q=0}^{\infty}\lambda(f;j_1,\ldots,j_q)\, e_{j_1}\otimes\cdots\otimes e_{j_q},
\end{equation}
with coefficients
\[
\lambda(f;j_1,\ldots,j_q)
=
\big\langle f,\, e_{j_1}\otimes\cdots\otimes e_{j_q}\big\rangle_{\mathfrak H^{\otimes q}}.
\]
Since $f$ is symmetric, $\lambda$ is invariant under permutations of its arguments: for every permutation $\sigma$ of $\{1,\dots,q\}$,
$$
\lambda(f;j_{\sigma(1)},\ldots,j_{\sigma(q)})=\lambda(f;j_1,\ldots,j_q).
$$

For notational convenience, given $e_{j_1},\ldots, e_{j_q}\in \HHH$ we set
\begin{align*}
	e_{j_1}\odot\cdots\odot e_{j_q} &:= \frac{1}{q!}\sum_{\sigma} e_{j_{\sigma(1)}}\otimes\cdots\otimes e_{j_{\sigma(q)}},
\end{align*}
where the sum runs over all permutations $\sigma$ of $\{1,\dots,q\}$. We shall also use the shorthand
\begin{align*}
	e_0^{\odot  k} \odot e_{j_1} \odot \ldots \odot e_{j_{q-k}}&:= 	\underbrace{e_0\odot \cdots \odot e_0}_{k\text{ times}} \odot e_{j_1} \odot \ldots \odot e_{j_{q-k}}.
\end{align*}
Equivalently (see, e.g., \cite[Eq.\ (B.3.1), p.~206]{nourdin2012normal}), the expansion \eqref{eq:fg-expansion} can be rewritten in terms of symmetric tensors as
\begin{equation}\label{eq:fg-sym-expansion}
f = \sum_{j_1,\ldots,j_q=0}^{\infty}\lambda(f;j_1,\ldots,j_q)\, e_{j_1}\odot\cdots\odot e_{j_q}.
\end{equation}
Grouping the terms according to the number of indices equal to $0$, we may rewrite \eqref{eq:fg-sym-expansion} as
\begin{equation}\label{eq:fg-sym-expansion2}
	f=\sum_{k=0}^q \sum_{j_1, \ldots, j_{q-k}=0}^{\infty} a_k^{(q)}\left(f;j_1, \ldots, j_{q-k}\right) e_0^{\odot  k} \odot e_{j_1} \odot \ldots \odot e_{j_{q-k}},
\end{equation}
where
\begin{align*}
	a_k^{(q)}\left(f;j_1, \ldots, j_{q-k}\right) &:= \lambda(f;j_1,\ldots,j_{q-k}, \underbrace{0,\ldots,0}_{k\text{ times}}).
\end{align*}

\medskip

Before turning to the proof of Lemma \ref{indepnt}, we recall the notion of contraction of tensors. Fix $r\in\{0,1,\dots,m\wedge n\}$. We first define the $r$th contraction on elementary tensors. For $r\geq 1$, we set
\[
\begin{aligned}
&(e_{j_1}\otimes\cdots\otimes e_{j_n})\otimes_r (e_{k_1}\otimes\cdots\otimes e_{k_m}) \\
&\qquad :=
\Bigg(\prod_{\ell=1}^r \langle e_{j_\ell},e_{k_\ell}\rangle_{\mathfrak H}\Bigg)\,
e_{j_{r+1}}\otimes\cdots\otimes e_{j_n}\otimes e_{k_{r+1}}\otimes\cdots\otimes e_{k_m}
\;\in\;\mathfrak H^{\otimes(m+n-2r)}.
\end{aligned}
\]
For $r=0$, we use the convention
\[
(e_{j_1}\otimes\cdots\otimes e_{j_n})\otimes_0 (e_{k_1}\otimes\cdots\otimes e_{k_m})
=
e_{j_1}\otimes\cdots\otimes e_{j_n}\otimes e_{k_1}\otimes\cdots\otimes e_{k_m}.
\]
In particular, when $m=n=r$,
\[
(e_{j_1}\otimes\cdots\otimes e_{j_n})\otimes_n (e_{k_1}\otimes\cdots\otimes e_{k_n})
=\prod_{\ell=1}^n \langle e_{j_\ell},e_{k_\ell}\rangle_{\mathfrak H}
=
\big\langle e_{j_1}\otimes\cdots\otimes e_{j_n},\,e_{k_1}\otimes\cdots\otimes e_{k_n}\big\rangle_{\mathfrak H^{\otimes n}}.
\]

The definition is then extended to general tensors by bilinearity and continuity. If $f\in\mathfrak H^{\otimes n}$ and $g\in\mathfrak H^{\otimes m}$ admit expansions as in \eqref{eq:fg-expansion}, their $r$th contraction is given by
\begin{equation}\label{eq:contraction-general}
\begin{aligned}
f\otimes_r g
&:= \sum_{j_1,\ldots,j_n\ge1}\sum_{k_1,\ldots,k_m\ge1}
\lambda(f;j_1,\ldots,j_n)\,\lambda(g;k_1,\ldots,k_m)\\
&\hskip3cm\times
(e_{j_1}\otimes\cdots\otimes e_{j_n})\otimes r (e_{k_1}\otimes\cdots\otimes e_{k_m}).
\end{aligned}
\end{equation}
By construction, $f\otimes_r g\in \mathfrak H^{\otimes(m+n-2r)}$ for every $r\le m\wedge n$. In general, $f\otimes_r g$ is not symmetric, and we denote by
\[
f\widetilde\otimes_r g \in \mathfrak H^{\odot(m+n-2r)}
\]
its symmetrization.

In the canonical case $\mathfrak H=L^2(T,\mathcal B,\mu)$, with $\mu$ $\sigma$-finite and non-atomic, formula \eqref{eq:contraction-general} takes the form, for $\mu^{m+n-2r}$-a.e.\ $(t_1,\dots,t_{m+n-2r})\in T^{m+n-2r}$,
\begin{equation}\label{eq:contraction-L2}
\begin{aligned}
&(f\otimes_r g)(t_1,\ldots,t_{m+n-2r})\\
&=
\int_{T^r} f(u_1,\ldots,u_r,t_1,\ldots,t_{n-r})\,\\
&\hskip3cm\times 
g(u_1,\ldots,u_r,t_{n-r+1},\ldots,t_{m+n-2r})\,
\mu^{\otimes r}(d u).
\end{aligned}
\end{equation}

We are now in a position to prove Lemma \ref{indepnt}, which is a key ingredient in the proof of Theorem \ref{thm:ray}, and hence also in the proof of Theorem \ref{thm:MallDiff}.

\begin{proof}[Proof of Lemma \ref{indepnt}]
	Choose an orthonormal basis $(e_n)_{n\geq 1}$ of $V$. By \eqref{eq:fg-sym-expansion2}, we have the expansion
	\begin{equation}\label{eq:fg-sym-expansion3}
		f_t=\sum_{k=0}^q \sum_{j_1, \ldots, j_{q-k}=0}^{\infty} a_{k}^{(q)}\left(f_t;j_1, \ldots, j_{q-k}\right) e_0^{\odot  k} \odot e_{j_1} \odot \ldots \odot e_{j_{q-k}},
	\end{equation}
where
\begin{align*}
	a_{k}^{(q)}\left(f_t;j_1, \ldots, j_{q-k}\right) &:=\lambda(f_t;j_1,\ldots,j_{q-k}, \underbrace{0,\ldots,0}_{k\text{ times}}).
\end{align*}
Since $f_t\in \HHH^{\odot  q}$, we have, for every $k=0,\ldots,q$,
$$ \sum_{j_1, \ldots, j_{q-k}=0}^{\infty} \left|a_{k}^{(q)}\left(f_t;j_1, \ldots, j_{q-k}\right)\right|^2<\infty.$$
For each $k=0,\ldots,q$, we may therefore define the vector $f_{t,q-k}\in \HHH^{\odot  {q-k}}$ by
\begin{align*}
	f_{t,q-k} &:= \sum_{j_1, \ldots, j_{q-k}=0}^{\infty} a_{k}^{(q)}\left(f_t;j_1, \ldots, j_{q-k}\right) e_{j_1} \odot \ldots \odot e_{j_{q-k}}.
\end{align*}
Plugging this definition into \eqref{eq:fg-sym-expansion3}, we obtain the tensor decomposition in $\mathfrak{H}^{\odot  q}$
\begin{equation}
	f_t=\sum_{k=0}^q e_0^{\otimes k} \odot f_{t,q-k}.
\end{equation}
Moreover,
\begin{equation}\label{eq:fg-sym-expansion4}
	\|f_t-f_s\|_{\HHH^{\otimes q}}=\sum_{k=0}^q \|e_0^{\otimes k} \odot (f_{t,q-k}-f_{s,q-k})\|_{\HHH^{\otimes q}},
\end{equation}
from which the inequality \eqref{eq:ftn-k-holder} follows directly.

It remains to establish the decomposition \eqref{indep}. To this end, observe that
$$
I_q(f_t)=\sum_{k=0}^q I_q\left(e_0^{\otimes k} \odot f_{t,q-k}\right) \stackrel{(\star)}{=} \sum_{k=0}^q I_k\left(e_0^{\otimes k}\right) I_{q-k}\left(f_{t,q-k}\right),
$$
where $(\star)$ follows from the product formula \eqref{eq:product-formula}, combined with the vanishing of the contractions $e_0^{\otimes k} \otimes r f_{t,q-k}$ for $k=1, \ldots, q-1$ and $r=1, \ldots, k \wedge(q-k)$. This concludes the proof.
\end{proof}

\bigskip
 
 \noindent
 {\bf Acknowledgements}.
L. L.  acknowledges support from the FRNS Research Credit grant J.0136.26 ``NOISE''. 

Part of this work was carried out while Y. N. was at the University of Luxembourg under a grant funding from the European Union’s Horizon 2020 research and innovation programme N° 811017.

I.N. gratefully acknowledges support from the Luxembourg National Research Fund
(Grant No. O22/17372844/FraMStA).

\end{document}